\documentclass[a4paper,11pt]{article}

\usepackage[T1]{fontenc}
\usepackage{lmodern}
\usepackage{microtype}
\usepackage[margin=2.5cm]{geometry}
\usepackage{amsmath,amssymb,amsthm,mathtools,bm,mathrsfs}
\usepackage{enumitem}
\usepackage{booktabs}
\usepackage{graphicx}
\usepackage{float}
\usepackage{xcolor}
\usepackage{hyperref}
\usepackage{aliascnt}
\usepackage[nameinlink,capitalise,noabbrev]{cleveref}

\hypersetup{
  colorlinks=true,
  linkcolor=blue!55!black,
  citecolor=blue!55!black,
  urlcolor=blue!55!black,
  pdftitle={Pinchoff by surface diffusion},
  pdfauthor={Glen Wheeler}
}

\numberwithin{equation}{section}

\newtheorem{theorem}{Theorem}[section]
\newaliascnt{proposition}{theorem}
\newtheorem{proposition}[proposition]{Proposition}
\aliascntresetthe{proposition}
\newaliascnt{lemma}{theorem}
\newtheorem{lemma}[lemma]{Lemma}
\aliascntresetthe{lemma}
\newaliascnt{corollary}{theorem}

\aliascntresetthe{corollary}
\newaliascnt{claim}{theorem}

\aliascntresetthe{claim}
\theoremstyle{definition}
\newaliascnt{definition}{theorem}
\newtheorem{definition}[definition]{Definition}
\aliascntresetthe{definition}
\theoremstyle{remark}
\newaliascnt{remark}{theorem}
\newtheorem{remark}[remark]{Remark}
\aliascntresetthe{remark}

\crefname{theorem}{Theorem}{Theorems}
\Crefname{theorem}{Theorem}{Theorems}
\crefname{proposition}{Proposition}{Propositions}
\Crefname{proposition}{Proposition}{Propositions}
\crefname{lemma}{Lemma}{Lemmas}
\Crefname{lemma}{Lemma}{Lemmas}
\crefname{corollary}{Corollary}{Corollaries}
\Crefname{corollary}{Corollary}{Corollaries}
\crefname{definition}{Definition}{Definitions}
\Crefname{definition}{Definition}{Definitions}
\crefname{remark}{Remark}{Remarks}
\Crefname{remark}{Remark}{Remarks}

\newcommand{\R}{\mathbb{R}}
\newcommand{\dd}{\,\mathrm{d}}
\newcommand{\A}{\mathbf{A}}
\newcommand{\Area}{\operatorname{Area}}
\newcommand{\Vol}{\operatorname{Vol}}
\newcommand{\tr}{\operatorname{tr}}
\newcommand{\dist}{\operatorname{dist}}
\newcommand{\mcG}{\mathcal{G}}
\newcommand{\mcW}{\mathcal{W}}

\title{Pinchoff by surface diffusion}
\author{Glen Wheeler\\
{\small School of Mathematics and Physics, University of Wollongong}\\
{\small Northfields Avenue, Wollongong, NSW 2522, Australia}\\
{\small\texttt{glenw@uow.edu.au}}}
\date{\today}

\begin{document}
\maketitle

\begin{abstract}
We construct surface diffusion flows $f:\mathbb{T}^2\times[0,T)\to\mathbb{R}^3$ that drive smooth closed embedded tori to pinchoff in finite time.
The flow remains embedded for $t\in[0,T)$ and develops a curvature singularity only at a distinguished point $p$ as $t\nearrow T$.
Away from $(p,T)$ the flow converges smoothly as $t\nearrow T$.
We characterise the singularity profile: If $A(t)$ denotes the radius of the waist, and the surface is given locally near the waist by the rotation of a radial graph $(z,t)\mapsto r(z,t)$, then there exists a constant $\mu>0$ and smooth function $U:\R\to\R$ such that
\[
 A(t)=\{4\mu(T-t)\}^{1/4}(1+o(1)),
 \qquad
 A(t)^{-1}r(A(t)\zeta,t)\stackrel{C^\infty_{loc}}{\longrightarrow} U(\zeta).
\]
Here $U$ is a rigorous realisation of the classical fundamental positive even conical similarity profile first computed numerically by Wong, Miksis, Voorhees and Davis and subsequently analysed by Bernoff, Bertozzi and Witelski.  
\end{abstract}

\section{Introduction}

The surface diffusion flow of a smooth closed immersed surface $f_0:\Sigma\to\R^3$ is a one-parameter family of smooth immersions $f:\Sigma\times[0,T)\to\R^3$ satisfying $f(\cdot,0) = f_0$ and
\begin{equation}\label{eq:sdf-intro-new}
 \partial_t f=-(\Delta H)\nu,
\end{equation}
where $H=(\kappa_1+\kappa_2)/2$ is the mean curvature, $\nu$ the unit normal, and $\Delta$ the Laplace-Beltrami operator corresponding to $f(\cdot,t)$.
The capillarity-driven law has its physical origins in Herring's theory of surface transport and Mullins's theory of thermal grooving \cite{Herring1951,Mullins1957}; its geometric formulation was developed further by Cahn and Taylor \cite{CahnTaylor1994}.  
Nichols and Mullins derived the evolution equation for surfaces of revolution \cite{NicholsMullins1965}.
Coleman, Falk and Moakher later observed numerical breakup of cylindrical bodies under that equation \cite{ColemanFalkMoakher1995,ColemanFalkMoakher1996}.  
Wong, Miksis, Voorhees and Davis first computed the universal conical similarity profile and its cone angle \cite{WongMiksisVoorheesDavis1998}.  
Bernoff, Bertozzi and Witelski developed the similarity theory, computed the fundamental and higher profiles, derived their far-field WKB structure and performed a formal and numerical stability analysis \cite{BernoffBertozziWitelski1998}.  
Mayer subsequently reported numerical curvature blow-up for compact axisymmetric surfaces, including toroidal examples \cite{Mayer2001}.

The purpose of this paper is to give a rigorous dynamical realisation of pinchoff with the fundamental conical profile computed by Wong, Miksis, Voorhees and Davis and by Bernoff, Bertozzi and Witelski.
As part of the proof, we establish existence and positivity of this profile, derive its tail and flux identities, and obtain the estimates needed to compactify it.

Surface diffusion flow is a fourth-order quasilinear geometric evolution equation.
It has no comparison or avoidance principle, and it does not in general preserve embeddedness, convexity, or graphicality.
Examples exhibiting this behaviour were given first by Giga and Ito for planar curve diffusion, and then by Mayer, Simonett and Blatt \cite{Blatt2010,GigaIto1998,MayerSimonett2000}.  

Known regularity and stability results for the flow concern regimes in which curvature is controlled or the initial surface lies close to an equilibrium.
McCoy, the author and Williams proved a lifespan theorem, formulated in terms of local curvature concentration, for a broad class of constrained surface diffusion flows \cite{McCoyWheelerWilliams2011}; a sharper version for simple constraints, including surface diffusion itself, was obtained in \cite{WheelerLifespan2011}.
For surfaces with small total trace-free curvature, the author proved global existence and exponential convergence to a round sphere, together with a curvature-concentration criterion for singularity formation \cite{WheelerNearSpheres2012}.
The localised energy estimates and blow-up analysis in these works were inspired by the lifespan and small-energy theory of Kuwert and Sch\"atzle for Willmore flow \cite{KuwertSchaetzle2002,KuwertSchaetzle2001}.

A complementary line of work studies stability near equilibria.
Elliott and Garcke treated curves close to circles \cite{ElliottGarcke1997}.
For hypersurfaces close to spheres, Escher, Mayer and Simonett proved global existence and exponential convergence \cite{EscherMayerSimonett1998}, while Escher and Mucha extended the near-sphere theory to rough phase spaces \cite{EscherMucha2010}.
LeCrone and Simonett analysed stability, instability and bifurcation for periodic axisymmetric cylinders \cite{LeCroneSimonett2013}.
In the flat torus, Acerbi, Fusco, Julin and Morini proved nonlinear stability near strictly stable critical sets for the area functional under a volume constraint in ambient dimension three \cite{AcerbiFuscoJulinMorini2019}.
Results in arbitrary dimension, under complementary closeness assumptions, were subsequently obtained by De~Gennaro, Diana, Kubin and Kubin and by Diana, Fusco and Mantegazza \cite{DeGennaroDianaKubinKubin2024,DianaFuscoMantegazza2026}.

Non-stationary self-similar solutions for surface diffusion remain comparatively scarce. 
The rough-data theory of Koch and Lamm \cite{KochLamm2012} yields global analytic forward self-similar solutions for entire graphs emanating from one-homogeneous Lipschitz data of sufficiently small slope, and Du and Yip \cite{DuYip2023} proved their stability. 
Rybka and the author \cite{RybkaWheeler2025} obtained complementary classification and rigidity results for solitons that are entire graphs over \(\mathbb{R}\); for forward self-similar profiles, their linearity conclusions require additional hypotheses. 
Giga and Katayama \cite{GigaKatayama2025} subsequently clarified that these results are compatible with the genuinely nonlinear small-slope forward self-similar profiles furnished by the Koch--Lamm theory. 
Related forward self-similar solutions for thermal-grooving boundary problems were constructed by Asai and Giga \cite{AsaiGiga2014} and, with the nonlinear no-flux condition, by Asai and Kohsaka \cite{AsaiKohsaka2025}; see also the multidimensional half-space theory of Giga and Katayama \cite{GigaKatayama2026}.

These works concern forward expanding solutions, \(\Gamma_t=t^{1/4}\Gamma_\ast\), which smooth homogeneous graphical data.
Our setting is different.
The profile constructed here is a backward self-similar shrinker, \(\Gamma_t=(T-t)^{1/4}\Gamma_\ast\), with the geometry of a positive two-ended axisymmetric neck.
The similarity term therefore has the opposite sign and, relative to the planar graphical problem, the equation for \(r=U(z)\) contains the additional azimuthal principal-curvature contribution.
Profiles of this type were identified through asymptotic analysis and numerical shooting by Bernoff, Bertozzi and Witelski \cite{BernoffBertozziWitelski1998}.
To the best of our knowledge, \cref{thm:certified-positive-conical-profile} gives the first rigorous construction of such a positive conical shrinking profile.
More broadly, \cref{thm:main} gives the first rigorous example of finite-time singularity formation for surface diffusion flow, realised by a compact embedded trajectory and modelled by this profile.

At the level of proof strategy, our construction was inspired by a broader circle of gluing, matched-asymptotic and finite-dimensional mode-selection arguments for other geometric flows.
Important precedents include the construction of selected mean-curvature-flow singularities by Vel\'azquez and by Angenent and Vel\'azquez \cite{Velazquez1994,AngenentVelazquez1997}, their compact realisation in work of Liu \cite{Liu2024}, and the degenerate Ricci-flow neckpinches of Angenent, Isenberg and Knopf \cite{AngenentIsenbergKnopf2015}.
Brendle and Kapouleas used a gluing obstruction to construct an ancient Ricci flow \cite{BrendleKapouleas2017}.
More recent work realises asymptotically conical shrinkers or prescribed tangent flows within closed Ricci and mean curvature flows \cite{Stolarski2026,LeeZhao2024,ChenLeeSunZhao2026}.
In Lagrangian mean curvature flow, Su, Tsai and Wood construct a compact infinite-time singularity by gluing in shrinking Lawlor necks \cite{SuTsaiWood2024}.
Most closely related analytically, Stolarski and Su construct finite-time Type~II singularities by modulation about shrinking asymptotically conical special-Lagrangian desingularisations, supported by a scale-dependent spectral analysis and finite-dimensional mode selection \cite{StolarskiSuDynamics2026,StolarskiSuSpectral2026}.
For the present paper, surface diffusion gives rise to new challenges: It is fourth-order and volume-preserving, the conical end here becomes transport-dominated in similarity variables, and the shrinking profile must be coupled across a fixed interface to an independently evolving outer cap.

Let us briefly outline our proof.

The basic idea behind our construction is to glue an admissible outer cap to the shrinking entire conical pincher.
The lack of a comparison principle makes this delicate.
There are also unstable modes in the linearisation about the cone, which enter the construction of the conical pincher, and in the parabolic linearisation about the pincher itself.
We use two nonstandard strategies to overcome these difficulties.

To construct the entire shrinking conical pincher, we first glue two solutions: (1) a shooting solution from the axis and (2) a Volterra chart solution from infinity.
Although unstable modes are present, they can be controlled on a finite interval.
As we move backwards from infinity, the unstable modes become stable while we remain close to a cone.
In order to join to the axis, however, we must eventually leave the neighbourhood of the cone.
This procedure is carried out in detail in \cref{sec:profile-new}.

Once the entire conical pincher is in hand, we glue it to an admissible outer cap and search for a surface diffusion flow that is $C^\infty_{\mathrm{loc}}$-close in spacetime to this evolving configuration.
In the approximate solution the outer cap is static, while the part of the pincher near the axis becomes singular at the origin.
This second gluing problem is again overcome in two parts.
For the outer cap, the maximal regularity theory developed by Simonett and collaborators \cite{EscherMayerSimonett1998,LeCroneSimonett2013} essentially does everything we need.
The core region requires a finer analysis of the entire conical pincher.
In logarithmic coordinates at the conical end, the fourth-order part degenerates and the limiting operator is a first-order transport term.
This transports perturbations outwards, into the collar region, without making them decay.
We therefore work in weighted spaces in which this outward shift nevertheless represents stability.
This allows us to decompose perturbations into a finite-dimensional controlled space and an infinite-dimensional stable space, and then to carry out a Lyapunov--Perron stable-manifold argument culminating in the strong-stable graph construction; see \cref{thm:spectral-gap-basin}.
This is the task of \cref{sec:linear-new}.
Finally, in \cref{sec:compact-new}, we show that sufficiently small perturbations in this infinite-dimensional stable space yield surface diffusion flows with all the desired pinching properties: the singularity occurs precisely at the axis, the flow remains embedded for every $t<T$, and the blow-up profile is the entire conical pincher.

As this outline suggests, the proof draws on a broad range of classical tools.
We use Fourier multiplier and weighted Sobolev estimates \cite{Grafakos2014}, Volterra integral equations and asymptotic integration \cite{Levinson1948}, maximal regularity \cite{DaPratoGrisvard1975,deSimon1964}, weighted Fredholm and exponential-dichotomy theory \cite{LockhartMcOwen1985,Palmer1988}, and semigroup, spectral and perturbation theory \cite{EngelNagel2000,Kato1966,Gearhart1978,Pruss1984}.
The nonlinear selection step uses Lyapunov--Perron invariant-manifold theory \cite{Henry1981,BatesJones1989,Wayne1997}, while the profile construction is completed using interval analysis and topological degree \cite{Moore1966,Krawczyk1969,Miranda1940}.

Throughout, we use \emph{physical} for quantities expressed in the original space--time variables $(z,t)$, rather than in the rescaled similarity variables $(\zeta,s)$, and \emph{material} for differentiation along the dilation characteristics relating these two descriptions.

Let us first define admissibility for the outer cap.

\begin{definition}[Admissible outer cap]\label{def:admissible-outer-cap}
	Fix $\alpha,\delta>0$.  An \emph{$(\alpha,\delta)$-admissible terminal meridian} is a reflection-symmetric simple closed curve $\Gamma_0$ in the $(z,r)$ half-plane $\{r\ge0\}$ with the following properties:
\begin{enumerate}[label=\textup{(\roman*)}]
 \item its only point on the rotation axis is $p_0=(0,0)$;
 \item in the fixed conical collar $0<|z|<4\delta$ it is the graph $r_0(z)=\alpha|z|$;
 \item it is smooth and embedded away from $p_0$, and on the complement of $|z|<2\delta$ it has positive distance from the axis and a positive normal injectivity radius.
\end{enumerate}
\end{definition}

Note that for a single smooth embedded outer arc, the two positivity requirements in \textup{(iii)} follow from compactness.
They are recorded explicitly because they must be uniform when the cap is varied.  
An \emph{admissible cap perturbation} is a sufficiently small reflection-symmetric $C^{k+8}$ normal graph, $k\ge10$, supported outside $|z|<4\delta$.

Our main theorem is the following.

\begin{theorem}[Pinchers exist]\label{thm:main}
There are numbers $\mu,\alpha>0$ and a smooth positive even conical profile $U$ with the following property.  
Let $\Gamma_0$ be any $(\alpha,\delta)$-admissible terminal meridian.  
There is $A_*=A_*(\Gamma_0,\delta)>0$ such that, for every $0<A_0<A_*$, the fixed-volume compatible trace class at waist radius $A_0$ contains an infinite-dimensional $C^1$ family $\mathcal M_{\rm pin}$ of initial tori.
Every smooth datum in $\mathcal M_{\rm pin}$ generates a smooth surface diffusion flow
\[
 f:\mathbb T^2\times[0,T)\longrightarrow\R^3
\]
which is rotationally symmetric, reflection-symmetric and embedded for $t<T$, and for which
\[
 \limsup_{t\uparrow T}\|\A(\cdot,t)\|_{L^\infty}=\infty,
\]
where $\A$ is the second fundamental form.
Near the reflected waist the surface is represented by a positive even axisymmetric graph $r=r(z,t)$.  
With $A(t)=r(0,t)$, every member of the family satisfies
\begin{align}
 A(t)&=\{4\mu(T-t)\}^{1/4}(1+o(1)),
 \label{eq:main-radius}\\
 \frac{r(A(t)\zeta,t)}{A(t)}&\longrightarrow U(\zeta)
 \quad\text{in }C^\infty_{\mathrm{loc}}(\R).
 \label{eq:main-profile}
\end{align}
The profile satisfies
\begin{equation}\label{eq:main-profile-equation}
 \mathcal F[U]+\mu(U-\zeta U')=0,
 \qquad U(0)=1,
\end{equation}
where $\mathcal F$ is the rotational surface-diffusion operator defined in \eqref{eq:terminal-operator}, and has the conical expansion
\begin{equation}\label{eq:main-tail}
 U(\zeta)=\alpha|\zeta|
 -\frac{|\zeta|^{-3}}{8\mu\alpha(1+\alpha^2)}
 -\frac{21(11\alpha^2+1)}
 {128\mu^2\alpha^3(1+\alpha^2)^3}|\zeta|^{-7}
 +O(|\zeta|^{-11}).
\end{equation}
Moreover,
\begin{align*}
 |\mu-0.030292801498271463|
 &\le1.361237725845061\times10^{-15},\\
 |\alpha-1.037079401503446|
 &\le1.557172962297049\times10^{-12}.
\end{align*}
The flow converges smoothly on compact subsets of the limiting meridian separated from the terminal conical point.  
Its limiting outer cap is a small smooth perturbation of the prescribed reference cap $\Gamma_0$.
\end{theorem}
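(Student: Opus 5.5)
The proof has three stages, following the outline in the introduction: construct the profile, analyse the linearised flow around it, and then close a nonlinear gluing argument.

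\textbf{Stage 1: the profile.} I would first construct the profile $U$ of \eqref{eq:main-profile-equation} together with the constants $\mu$ and $\alpha$. The plan is to glue two solution families.
\begin{itemize}
\item \emph{From the axis.} A two-parameter shooting family of even solutions with $U(0)=1$. The free data are $U''(0)$ and $\mu$.
\item \emph{From infinity.} A Volterra-chart family of solutions asymptotic to the cone $\alpha|\zeta|$. Substituting the ansatz $U=\alpha\zeta+\sum_j c_j\zeta^{-4j+1}$ into $\mathcal F[U]+\mu(U-\zeta U')=0$ balances the transport term $\mu(U-\zeta U')$ against the fourth-order part order by order. This fixes the coefficients of $\zeta^{-3}$ and $\zeta^{-7}$ in \eqref{eq:main-tail}, with $O(\zeta^{-11})$ remainder controlled by Levinson-type asymptotic integration.
\end{itemize}
The far-field linearisation about the cone has WKB modes of the form $\exp(c\,\zeta^{4/3})$. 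Growing modes must be killed, which leaves a finite-dimensional family parametrised by $\alpha$ and the decaying-mode amplitudes.

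Matching the two families at a finite point $\zeta_0$ is a finite-dimensional system: continuity of $U,U',U'',U'''$. I would solve it with validated interval arithmetic. A Krawczyk operator (or a Miranda/degree argument) on a small box then certifies a zero, giving the stated enclosures for $\mu$ and $\alpha$. Positivity of $U$ on $[0,\zeta_0]$ is also checked by interval bounds, and beyond $\zeta_0$ it follows from the tail expansion.

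\textbf{Stage 2: the linearised flow.} Next I would pass to similarity variables $r=(T-t)^{1/4}u(\zeta,s)$ with $\zeta=z(T-t)^{-1/4}$ and $s=-\log(T-t)$, and linearise around $U$. In logarithmic coordinates $y=\log\zeta$ on the conical end, the fourth-order part degenerates like $e^{-4y}$. The limiting operator is therefore pure transport $\mu(\partial_y-1)$, which pushes perturbations outward without making them decay. To turn this into decay, I would choose exponentially weighted spaces $e^{\gamma y}L^2$ with $\gamma$ chosen so that the outward shift is contractive. Weighted Fredholm theory (Lockhart--McOwen) then shows the linear operator is sectorial-plus-compact with essential spectrum in $\{\Re\lambda<-\epsilon\}$. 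Consequently there are only finitely many eigenvalues with $\Re\lambda\ge-\epsilon$, namely
\begin{itemize}
\item the modes generated by the symmetries: time translation, scaling and volume;
\item the genuine instabilities.
\end{itemize}
I would invoke \cref{thm:spectral-gap-basin} for the resulting exponential dichotomy and the Lyapunov--Perron strong-stable graph. This yields a $C^1$ manifold of perturbations of finite codimension. The symmetry modes are absorbed by modulating $T$ and $\mu$-normalisation parameters, and the remaining unstable directions are fixed by a finite-dimensional selection, giving the family $\mathcal M_{\rm pin}$.

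\textbf{Stage 3: the compact gluing.} I would build the approximate solution by cutting off the rescaled pincher $(T-t)^{1/4}U(z(T-t)^{-1/4})$ inside $|z|<2\delta$ and joining it to the static cap $\Gamma_0$. Because $\Gamma_0$ is exactly conical with slope $\alpha$ on the collar, the error is supported in $\delta<|z|<4\delta$. By the tail \eqref{eq:main-tail} this error is of size $O((T-t)|z|^{-3})$, which is integrable in time. The outer cap evolves by maximal regularity à la Escher--Mayer--Simonett, posed as a fixed point in $C^{k+8}$ spaces coupled at the collar; this coupling yields the slightly perturbed limiting cap. The inner region is handled by Stage 2.

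Choosing $A_0<A_*$ makes $T$ small and the collar error small, so a contraction closes. The limit \eqref{eq:main-profile} then follows from convergence $u(\cdot,s)\to U$ in the weighted norms, upgraded to $C^\infty_{\mathrm{loc}}$ by parabolic smoothing. The waist law \eqref{eq:main-radius} follows from $u(0,s)\to1$, using the normalisation $A=\{4\mu(T-t)\}^{1/4}$ built into the rescaling, and curvature blow-up follows from $|\A|\sim A(t)^{-1}$. Embeddedness has two sources: positivity of $U$ in the core, and the uniform distance from the axis and normal injectivity radius in condition (iii) of \cref{def:admissible-outer-cap} outside it.

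\textbf{Main obstacle.} I expect the hardest step to be the weighted spectral gap. The transport-dominated end gives no decay by itself, and the weights have to reconcile three requirements at once: the far-field contraction, compatibility with the outer maximal-regularity spaces at the fixed interface, and the nonlinear estimates. I would first test a single exponential weight with $\gamma$ strictly between the indicial roots of the cone linearisation.
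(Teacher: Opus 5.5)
\textbf{Gap: the functional framework of Stage~2, on which Stage~3 rests.}
Your Stage~1 is essentially the paper's construction: shooting in $(U''(0),\mu)$, a Volterra tail chart in $(\alpha,\mu,C_L)$, four jet conditions at a finite base, and a validated Miranda/Krawczyk step. The overall architecture also matches. The problem is the functional setting you propose for the linear and nonlinear analysis.

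First, the linearisation is \emph{not} sectorial-plus-compact. Its essential spectrum contains the whole line $-\mu/2+i\R$ (see \eqref{eq:fredholm-exact-essential-boundary}), so no shift of it is sectorial. On $i\R$ the shifted resolvent is bounded below by $(\mu/2-\eta)^{-1}$, so $\sup_\tau\|\tau(i\tau-\mathcal L_{\rm s}-\eta)^{-1}\|=\infty$. Hence the ordinary $L^2$ maximal regularity you would need for a quasilinear fourth-order Lyapunov--Perron argument fails. Nor is the variable-coefficient part relatively compact: the error $a_4-(2(1+\alpha^2)^2)^{-1}=O(|\zeta|^{-4})$ still multiplies $v^{(4)}$. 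Also, the fourth-order part degenerates like $e^{-4y}$ at the end, so Lockhart--McOwen theory for uniformly elliptic ends does not apply directly. The paper instead proves an explicit end dichotomy and parametrix, and obtains decay on $E_{\rm s}$ from Gearhart--Pr\"uss rather than from analyticity. It then proves a \emph{convective} maximal-regularity estimate, in which the time derivative is the material derivative $\mathfrak D_\mu v=v_s+\mu\zeta v_\zeta-\mu v$, by straightening the dilation on unit time intervals (\cref{prop:convective-maximal-regularity}).

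Second, and more seriously, your proposed remedy for the ``main obstacle'', a single exponential weight in $y=\log\zeta$, cannot close the nonlinear problem. The weighted decay is a discount, not dissipation. By \eqref{eq:far-semigroup}, a perturbation is carried to $|\zeta|\asymp e^{\mu s}$ with $v/\zeta$ and $v'$ unchanged, so the cone-slope channel is neutral. A weighted norm can therefore decay while giving no pointwise control of $v/\langle\zeta\rangle$ or $v'$ at the end. That control is exactly what you need for:
\begin{itemize}
\item positivity of $U+v$;
\item uniform parabolicity of $(1+(U'+v')^2)^{-2}$;
\item embeddedness;
\item coupling at the fixed interface, which sits at $\zeta=\delta/A(s)\to\infty$, where the weight is weakest.
\end{itemize}
The paper runs two topologies at once. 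In the discounted Hilbert norm the solution decays like $e^{-\eta s}$. In a non-discounted conical graph norm ($\mathfrak c_k$ in \eqref{eq:pincher-global-smallness}, $\mathbb C^k_R$ in \cref{prop:terminal-core-semiflow}) it is only kept uniformly small. Sources are measured in an $\ell^1$-over-time-blocks norm so that the neutral slope channel cannot accumulate, and these spaces are glued to the outer maximal-regularity space in a fibre product.

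\textbf{Smaller points.}
\begin{itemize}
\item Volume is not a linear mode to be modulated. Under reflection symmetry the only geometric mode is the dilation $DU$, which encodes both time translation and scaling and is removed by the waist gauge. Volume is conserved, and landing in the fixed-volume class of the statement requires one transverse outer scalar (\cref{lem:uniform-volume-chart,lem:fully-coupled-selection}).
\item A static cap does not have error supported in the collar of size $O((T-t)|z|^{-3})$. Its residual is $O(1)$: already on the collar $\mathcal F[\alpha z]=z^{-3}/(2\alpha(1+\alpha^2))\neq0$. The paper instead matches to $r_0+\tau R_1$ with $R_1=-\mathcal F[r_0]=4\mu bz^{-3}$; this is where the forced coefficient $b$ makes the inverse-cubic terms cancel (\cref{prop:fixed-interface-matching}).
\end{itemize}
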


\begin{remark}
Let $E_{\rm c}$ be the finite-dimensional controlled spectral space in \cref{prop:finite-core-splitting} and put $N=\dim E_{\rm c}$.  
At every fixed Sobolev level $k\ge10$, $\mathcal M_{\rm pin}$ has codimension $N$ inside the fixed-volume compatible trace class, and codimension $N+1$ in the unrestricted compatible trace class.  
The family and the limiting outer cap depend $C^1$ on admissible perturbations of $\Gamma_0$.  
The smallness threshold $A_*$ is chosen, in particular, so that the physical time $A_0^4/(2\mu)$, which bounds the remaining time after shrinking the construction, lies inside the uniform outer-cap existence interval furnished by \cref{lem:outer-cap-time}.
\end{remark}

The geometry of the degeneration is illustrated in \cref{fig:conical-pinch-snapshots}.

\begin{figure}[t]
 \centering
 \begin{minipage}[t]{0.32\textwidth}
  \centering
  \includegraphics[width=\linewidth]{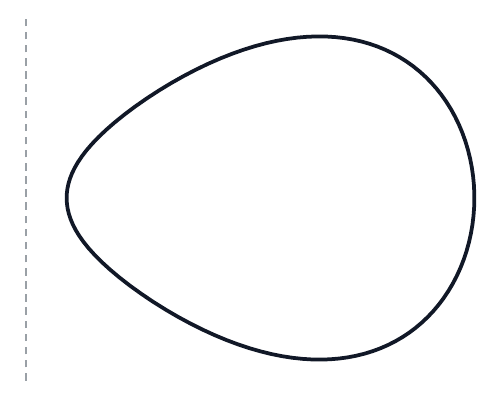}
  \par\smallskip
  {\small (a) Initial time, $t=0$.}
 \end{minipage}\hfill
 \begin{minipage}[t]{0.32\textwidth}
  \centering
  \includegraphics[width=\linewidth]{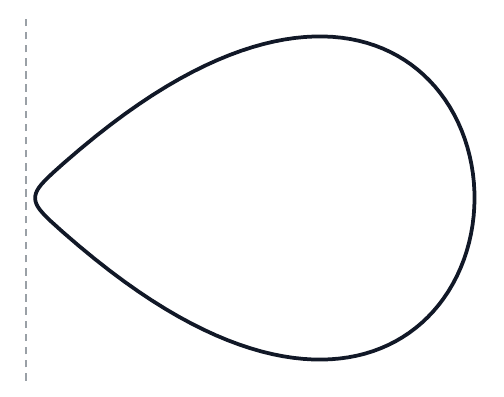}
  \par\smallskip
  {\small (b) $t=t_*<T$, close to pinch-off.}
 \end{minipage}\hfill
 \begin{minipage}[t]{0.32\textwidth}
  \centering
  \includegraphics[width=\linewidth]{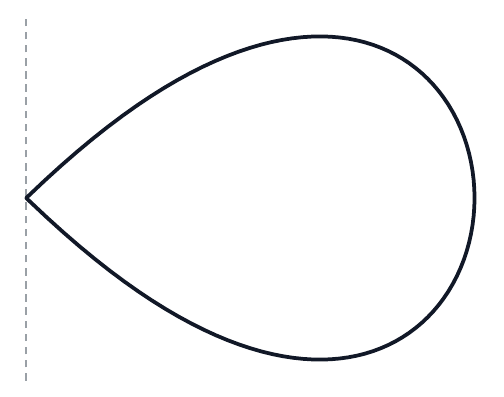}
  \par\smallskip
  {\small (c) $t=T$, limiting meridian.}
 \end{minipage}
 \caption{Illustrative meridian profiles for the conical-pinching family.
 The outer arc is schematic: the theorem permits any  admissible cap in the sense of \cref{def:admissible-outer-cap}.}
 \label{fig:conical-pinch-snapshots}
\end{figure}

The paper is organised as follows.  
\Cref{sec:geometry-new} records the axisymmetric equation and terminal scaling.  
The entire conical pincher is treated in \cref{sec:profile-new}.  
Weighted terminal dynamics and finite-mode selection are treated in \cref{sec:linear-new}.  
The fixed-interface compactification is proved in \cref{sec:compact-new}.  
Computations and analysis of the end parametrix appears in \cref{app:terminal-fredholm}.

\section{Geometric setting and terminal scaling}\label{sec:geometry-new}

Let $f:M^2\to\R^3$ be a smooth immersion of a closed orientable surface.  We write $g$ for the induced metric, $\nu$ for a local unit normal, $\A$ for the second fundamental form, and
\[
 H=\frac12\tr_g\A
\]
for the averaged scalar mean curvature.  Our Laplace--Beltrami operator is non-positive in $L^2$:
\begin{equation}\label{eq:laplace-sign}
 \int_M \varphi\Delta\varphi\,\dd\mu
 =-\int_M|\nabla\varphi|^2\,\dd\mu.
\end{equation}
The surface diffusion flow is
\begin{equation}\label{eq:sdf}
 \partial_t f=-(\Delta H)\nu.
\end{equation}
Changing the local orientation changes both $H$ and $\nu$ and leaves the vector field in \eqref{eq:sdf} unchanged.

\begin{lemma}[Area and volume]\label{lem:area-volume}
Let $f_t$ be a smooth solution of \eqref{eq:sdf}.  Then
\begin{equation}\label{eq:area-dissipation}
 \frac{\dd}{\dd t}\Area(\Sigma_t)
 =-2\int_{\Sigma_t}|\nabla H|^2\,\dd\mu.
\end{equation}
If $\Sigma_t$ is embedded and bounds a region $\Omega_t$, then
\begin{equation}\label{eq:volume-preservation}
 \frac{\dd}{\dd t}\Vol(\Omega_t)=0.
\end{equation}
\end{lemma}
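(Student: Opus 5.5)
We use the first variation formulas for area and enclosed volume under a normal variation, and then integrate by parts once or twice using the convention \eqref{eq:laplace-sign}.

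\emph{Area.} Write the velocity as $\partial_t f=\phi\,\nu$ with $\phi=-\Delta H$. For a normal variation with speed $\phi$, the metric evolves by $\partial_t g_{ij}=-2\phi A_{ij}$, where $A_{ij}=-\langle\partial_{ij}f,\nu\rangle$ or the opposite sign, depending on the orientation. Hence
\[
\partial_t\dd\mu=\tfrac12 g^{ij}\partial_t g_{ij}\,\dd\mu=\mp 2H\phi\,\dd\mu,
\]
with the factor $2$ coming from $H=\frac12\tr_g\A$. We fix the orientation so that the first variation of area reads $\frac{\dd}{\dd t}\Area=-2\int H\phi\,\dd\mu$. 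Since the product $H\nu$, and hence $H\phi$, is independent of the choice of orientation, this sign is consistent. A quick check on the round sphere of radius $R$, with $\nu$ the outer normal and $H=-1/R$ (so that $\partial_t\Area=8\pi R\dot R$), confirms it. With $\phi=-\Delta H$ we obtain
\[
\frac{\dd}{\dd t}\Area=2\int H\Delta H\,\dd\mu=-2\int|\nabla H|^2\,\dd\mu,
\]
using \eqref{eq:laplace-sign} with $\varphi=H$. This step is legitimate because $M$ is closed, so there are no boundary terms.

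\emph{Volume.} For embedded $\Sigma_t=\partial\Omega_t$, take $\nu$ to be the outer normal. The Reynolds transport theorem (equivalently, the divergence-theorem formula $\Vol=\frac13\int\langle f,\nu\rangle\,\dd\mu$, differentiated in $t$) gives
\[
\frac{\dd}{\dd t}\Vol(\Omega_t)=\int_{\Sigma_t}\langle\partial_t f,\nu\rangle\,\dd\mu=-\int_{\Sigma_t}\Delta H\,\dd\mu.
\]
Applying the divergence theorem on the closed manifold $\Sigma_t$, equivalently \eqref{eq:laplace-sign} polarised with the constant test function $1$, shows that this integral vanishes. Tangential reparametrisations contribute nothing to either computation, so \eqref{eq:volume-preservation} follows.

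The only delicate point is bookkeeping of signs. One must check that the sign convention relating $H$, $\nu$ and $\A$ makes the first variation of area equal to $-2\int H\phi$ rather than $+2\int H\phi$. The orientation-invariance remark after \eqref{eq:sdf}, together with the sphere check above, settles this. Everything else is routine integration by parts on a closed surface.
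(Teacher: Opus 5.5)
Your proposal is correct and follows the paper's proof: apply the first variation formulas $\frac{\dd}{\dd t}\Area=-2\int HV\,\dd\mu$ and $\frac{\dd}{\dd t}\Vol=\int V\,\dd\mu$ with $V=-\Delta H$, then integrate by parts on the closed surface. Your sphere check ($H=-1/R$ for the outer normal) agrees with the paper's convention. Indeed, \eqref{eq:axisymmetric-H} with the inward normal \eqref{eq:axisymmetric-normal} gives $H>0$ on a cylinder, so the sign you hedged in the metric evolution is resolved correctly.
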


\begin{proof}
For a normal velocity $V=\langle\partial_t f,\nu\rangle$, the first variations in the present convention are
\[
 \frac{\dd}{\dd t}\Area(\Sigma_t)
 =-2\int_{\Sigma_t}HV\,\dd\mu,
 \qquad
 \frac{\dd}{\dd t}\Vol(\Omega_t)
 =\int_{\Sigma_t}V\,\dd\mu.
\]
Substituting $V=-\Delta H$, integrating by parts, and using that $\Sigma_t$ is closed gives both identities.
\end{proof}

We use the local well-posedness theory in normal-graph coordinates.  Let $\Sigma$ be a smooth compact orientable immersed reference surface with immersion $i:\Sigma\to\R^3$ and smooth unit normal $\nu_\Sigma$.  
For a sufficiently small function $\rho$ define
\begin{equation}\label{eq:normal-graph}
 i_\rho(p)=i(p)+\rho(p)\nu_\Sigma(p),
 \qquad
 \Gamma_\rho=i_\rho(\Sigma).
\end{equation}
We write $h^{s}(\Sigma)$ for the little H\"older space of order $s$.

\begin{theorem}[Local normal-graph semiflow]\label{thm:local-semiflow}
Let $0<\alpha<1$ and let $\Sigma$ be as above.  
Given a sufficiently small $\sigma\in h^{2+\alpha}(\Sigma)$, there exist $T_0>0$ and an $h^{2+\alpha}$ neighbourhood $\mcW$ of $\sigma$ such that, for every $\rho_0\in\mcW$, surface diffusion has a unique classical solution represented by a normal graph
\[
 \rho(\cdot;\rho_0)\in
 C([0,T_0],h^{2+\alpha}(\Sigma))
 \cap C^\infty((0,T_0)\times\Sigma).
\]
The map $(t,\rho_0)\mapsto\rho(t;\rho_0)$ is a smooth local semiflow.  
If $\rho_0$ is smooth, then the solution is differentiable at $t=0$ as a $C^0$-valued map and
\begin{equation}\label{eq:strong-time-regularity}
 \partial_t\rho(0;\rho_0)=\mcG(\rho_0),
\end{equation}
where $\mcG$ is the surface diffusion graph operator.  
More precisely, the solution has $C([0,T_0],h^{4+\alpha_0})\cap C^1([0,T_0],h^{\alpha_0})$ regularity for every exponent $\alpha_0<\alpha$ for which $\rho_0\in h^{4+\alpha_0}$.
\end{theorem}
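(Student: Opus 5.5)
The plan is to reduce \cref{thm:local-semiflow} to the abstract quasilinear maximal-regularity theory in continuous interpolation (little H\"older) spaces, as in \cite{EscherMayerSimonett1998,DaPratoGrisvard1975}. The first step is to write surface diffusion for $\Gamma_\rho$ as a scalar equation for $\rho$. Surface diffusion moves $\Gamma_\rho$ with normal velocity $V=-\Delta H$. The normal-graph parametrisation \eqref{eq:normal-graph} moves with velocity $\partial_t\rho\,\nu_\Sigma$, so the two normal speeds agree when
\[
 \partial_t\rho=-\frac{\Delta_{\rho}H(\rho)}{\langle\nu_\rho,\nu_\Sigma\rangle}=:\mcG(\rho),
\]
which is well defined while $\|\rho\|_{C^1}$ is small. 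Expanding $H(\rho)$ and $\Delta_\rho$ in local coordinates shows that $\mcG$ is quasilinear of the form $\mcG(\rho)=-\mathcal A(\rho)\rho+F(\rho)$. Here $\mathcal A(\rho)$ is a fourth-order operator whose coefficients depend smoothly (indeed analytically) on $\rho$ and its derivatives up to order one, and $F$ depends on derivatives up to order three. The principal symbol is $\tfrac12\langle\nu_\rho,\nu_\Sigma\rangle^{-1}|\xi|_{g_\rho}^4$ (up to the factor from $H=\tfrac12\tr\A$). It is therefore uniformly strongly elliptic near $\sigma$.

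The second step is to verify the hypotheses of the abstract theorem with $E_0=h^{\alpha}(\Sigma)$, $E_1=h^{4+\alpha}(\Sigma)$ and the continuous interpolation space $E_\theta=h^{2+\alpha}(\Sigma)$ (with $\theta=1/2$). Write $V=\{\rho\in h^{2+\alpha}:\|\rho\|_{C^1}\text{ small}\}$. The required facts are $\mathcal A\in C^\omega(V,\mathcal H(E_1,E_0))$ and $F\in C^\omega(V,E_0)$. Membership of $\mathcal A(\rho)$ in $\mathcal H(E_1,E_0)$, i.e.\ that $-\mathcal A(\rho)$ generates an analytic semigroup, follows from the standard little-H\"older Schauder theory for uniformly elliptic fourth-order operators on a compact manifold. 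The coefficients lie in $h^{1+\alpha}\subset h^\alpha$, which is enough. The key point is that $h^\alpha$ little H\"older spaces are continuous interpolation spaces. Continuous maximal regularity of Da Prato--Grisvard type therefore holds, yielding a unique solution $\rho\in C([0,T_0],h^{2+\alpha})\cap C((0,T_0],h^{4+\alpha})\cap C^1((0,T_0],h^\alpha)$. The same theory gives existence on a uniform interval $[0,T_0]$ for all $\rho_0$ in an $h^{2+\alpha}$ neighbourhood $\mcW$ of $\sigma$, together with smooth (analytic) dependence on $(t,\rho_0)$, i.e.\ a smooth local semiflow. Interior smoothness $C^\infty((0,T_0)\times\Sigma)$ is then obtained by a parameter trick (Angenent's scaling $\rho_\lambda(t)=\rho(\lambda t)$ combined with the implicit function theorem), bootstrapping in both time and space.

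For the regularity statement at $t=0$, suppose $\rho_0\in h^{4+\alpha_0}$ with $\alpha_0<\alpha$. The plan is to rerun the argument on the lower scale $E_0=h^{\alpha_0}$, $E_1=h^{4+\alpha_0}$ with initial data in $E_1$. For data in the domain, maximal regularity gives $\rho\in C([0,T_0],E_1)\cap C^1([0,T_0],E_0)$ up to $t=0$. Uniqueness in the larger class identifies this solution with the previous one. Evaluating the equation at $t=0$ then gives \eqref{eq:strong-time-regularity} in $h^{\alpha_0}\subset C^0$. The main obstacle I expect is this compatibility and uniformity bookkeeping. The neighbourhood $\mcW$ and the time $T_0$ must be chosen uniformly in $h^{2+\alpha}$, while the improved regularity lives on a different scale, and the solutions on the two scales must be shown to coincide. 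I would handle this by establishing uniqueness in the weakest class $C([0,T_0],h^{2+\alpha_0})\cap C((0,T_0],h^{4+\alpha_0})$, which contains both solutions.
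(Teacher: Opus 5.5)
Your route, continuous maximal regularity for a quasilinear fourth-order equation on the little H\"older scale with phase space $h^{2+\alpha}=(h^\alpha,h^{4+\alpha})^0_{1/2,\infty}$ plus a parameter trick for smoothness, is the standard argument behind this result. The paper gives no proof of its own here: it cites \cite{EscherMayerSimonett1998} and \cite[Proposition~2.1]{MayerSimonett2000}. One step, however, fails as you have written it.

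\textbf{The splitting of $\mcG$ is incompatible with the phase space.} You put into $\mathcal A(\rho)$ only coefficients depending on $(\rho,\nabla\rho)$, and you let $F$ depend on derivatives up to order three. You then require $F\in C^\omega(V,h^\alpha)$ with $V$ open in $h^{2+\alpha}$, and this is false for your splitting. The expression $\Delta_\rho H(\rho)$ contains terms of the form $c(\rho,\nabla\rho)\,\nabla^2\rho*\nabla^3\rho$. They come from derivatives landing on the $\nabla\rho$-dependent coefficients of $H$, and from the Christoffel symbols of $g_\rho$ multiplying $\nabla H$. For $\rho\in h^{2+\alpha}$ these terms are not even functions in $h^\alpha$. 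With your $F$, the abstract theorem would only run with a phase space such as $h^{3+\alpha}$. Alternatively it would need a critical-space version with time weights and structural estimates that you have not verified. Neither gives the statement in $h^{2+\alpha}$.

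\textbf{The repair is the standard regrouping.} These terms are linear in $\nabla^3\rho$, so put them into the linear operator: schematically $\mathcal A(\rho)=a_4(\rho,\nabla\rho)\nabla^4+a_3(\rho,\nabla\rho,\nabla^2\rho)\nabla^3$. Then $F$ depends only on $(\rho,\nabla\rho,\nabla^2\rho)$.
\begin{itemize}
\item For $\rho\in h^{2+\alpha}$ the coefficients of $\mathcal A(\rho)$ lie only in $h^\alpha$, not in $h^{1+\alpha}$ as you claimed. This is still enough for $\mathcal A(\rho)\in\mathcal H(h^{4+\alpha},h^\alpha)$ by little-H\"older Schauder theory.
\item $F$ then maps $V$ analytically into $h^\alpha$. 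This is exactly why $h^{2+\alpha}$ is the natural phase space.
\item The same regrouping must be used on the $\alpha_0$-scale.
\end{itemize}

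With that change the rest of your plan goes through:
\begin{itemize}
\item a uniform $T_0$ on $\mcW$ and analytic dependence on $\rho_0$ come from the fixed-point argument;
\item interior smoothness comes from Angenent's trick and bootstrapping;
\item the two solutions are identified by uniqueness in the lower-scale class $C([0,T],h^{2+\alpha_0})\cap C((0,T],h^{4+\alpha_0})\cap C^1((0,T],h^{\alpha_0})$, which contains both the $h^{2+\alpha}$ solution and the one with data in $h^{4+\alpha_0}$;
\item evaluating the equation at $t=0$ then gives $\partial_t\rho(0;\rho_0)=\mcG(\rho_0)$.
\end{itemize}
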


\begin{proof}
This is the normal-graph formulation of the local existence, uniqueness and smooth-dependence theorem of Escher, Mayer and Simonett \cite{EscherMayerSimonett1998}; the precise semiflow statement used here is also recorded in \cite[Proposition~2.1]{MayerSimonett2000}.  
We use it as the standard local theory for the quasilinear fourth-order equation.
For the corresponding axisymmetric well-posedness and bifurcation framework, see also \cite{LeCroneSimonett2013}.
\end{proof}

The normal-graph theory will be used in the axisymmetric setting, so we first record that uniqueness preserves the symmetries of the initial surface.

\begin{lemma}[Preservation of symmetries]\label{lem:symmetry}
Suppose the initial immersion and reference parametrisation are invariant under a compact group of Euclidean isometries and domain diffeomorphisms.  
Then the unique local solution from \cref{thm:local-semiflow} has the same symmetry for as long as the normal-graph representation exists.
\end{lemma}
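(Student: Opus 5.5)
The plan is the standard uniqueness argument: transport the solution by each symmetry, check that the transported object is again a normal-graph solution of \eqref{eq:sdf} with the same initial datum, and then apply the uniqueness part of \cref{thm:local-semiflow}.

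Let $G$ be the compact group. Each element acts as a pair $(R,\phi)$, where $R$ is a Euclidean isometry of $\R^3$ and $\phi$ is a diffeomorphism of $\Sigma$. The symmetry hypotheses are
\[
 R\circ i=i\circ\phi,\qquad
 R\circ f_0=f_0\circ\phi .
\]
Let $f$ be the unique local solution, written as $f=i_\rho$ with $\rho=\rho(\cdot;\rho_0)$. Define the candidate transformed solution
\[
 \tilde f(p,t)=R^{-1}f(\phi(p),t).
\]

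\textbf{Step 1: $\tilde f$ solves \eqref{eq:sdf}.} Composition with the domain diffeomorphism $\phi$ is a reparametrisation, so it does not change the geometric quantities $H$, $\Delta H$ and $\nu$ pointwise. Applying the rigid motion $R^{-1}$ transforms the normal as $\nu\mapsto R^{-1}\nu$, up to an orientation sign. It leaves $H$ and $\Delta H$ unchanged when orientation is preserved, and flips the sign of $H$ when it is reversed. The remark after \eqref{eq:sdf} says the vector field $-(\Delta H)\nu$ is invariant under a change of orientation. Hence $\partial_t\tilde f=-(\Delta\tilde H)\tilde\nu$, and $\tilde f(\cdot,0)=f_0$ by the symmetry of the initial datum.

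\textbf{Step 2: $\tilde f$ is a normal graph over $\Sigma$ with small height.} Differentiating $R\circ i=i\circ\phi$ shows that $\nu_\Sigma\circ\phi=\pm R\nu_\Sigma$. The sign here is locally constant, and it is $+$ once we fix the convention that $\nu_\Sigma$ is equivariant, which I will take as part of the invariance hypothesis on the reference parametrisation. Then
\[
 \tilde f(p,t)=R^{-1}\bigl(i(\phi(p))+\rho(\phi(p),t)\nu_\Sigma(\phi(p))\bigr)
 =i(p)+\rho(\phi(p),t)\nu_\Sigma(p).
\]
So $\tilde f=i_{\tilde\rho}$ with $\tilde\rho=\rho\circ\phi$. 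Composition with a smooth diffeomorphism preserves the little H\"older spaces $h^{2+\alpha}$ and the regularity class in \cref{thm:local-semiflow}. The norms are comparable and the initial datum is the same, so $\tilde\rho(0)=\rho_0\in\mcW$. Therefore $\tilde\rho$ is a classical solution in the class where the theorem asserts uniqueness.

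\textbf{Step 3: conclude.} Uniqueness gives $\tilde\rho=\rho$, that is $R\circ f=f\circ\phi$, on $[0,T_0]$. Since $G$ acts by group elements, this holds for every element. To cover the full interval on which the normal-graph representation exists, I would run a continuation argument. Let $t^*$ be the supremum of times up to which the symmetry holds. If $t^*$ lay inside the existence interval, restarting the semiflow at $t^*$, with the already-symmetric datum $\rho(t^*)$ as the new initial value, would extend the symmetry past $t^*$, a contradiction.

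\textbf{Main obstacle.} The only delicate point is Step 2: keeping the transformed solution inside the exact function class and neighbourhood where uniqueness is asserted. In particular, the equivariance sign of $\nu_\Sigma$ has to be fixed correctly so that $\tilde\rho=\rho\circ\phi$ rather than $-\rho\circ\phi$. For orientation-reversing elements, such as the reflection $z\mapsto -z$, I would check that the equivariance still holds with the outward normal of the torus. It does, because the reflection maps the enclosed region to itself.
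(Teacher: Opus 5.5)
Your proposal is correct and follows the paper's argument: transform the solution by each symmetry, use Euclidean and reparametrisation invariance of \eqref{eq:sdf} to get a second solution with the same initial datum, and conclude by the uniqueness in \cref{thm:local-semiflow}. Your Steps 2 and 3 (checking that $\rho\circ\phi$ stays in the normal-graph uniqueness class, and continuing past $T_0$) spell out details the paper's three-line proof leaves implicit. Two small remarks: $i_\rho$ solves \eqref{eq:sdf} only in its normal component, which is all the uniqueness statement uses. The sign in $\nu_\Sigma\circ\phi=\pm R\nu_\Sigma$ is harmless either way, since one can simply set $\tilde\rho=\pm\rho\circ\phi$.
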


\begin{proof}
Apply the symmetry to the solution.  
Euclidean invariance of \eqref{eq:sdf} gives another solution with the same initial data.  
Uniqueness gives equality.
\end{proof}

\subsection{Axisymmetric graphs}

Consider an axisymmetric graph
\begin{equation}\label{eq:axisymmetric-graph}
 f(z,\theta,t)
 =\bigl(r(z,t)\cos\theta,r(z,t)\sin\theta,z\bigr),
 \qquad r>0.
\end{equation}
Set
\[
 E=1+r_z^2
\]
and choose
\begin{equation}\label{eq:axisymmetric-normal}
 \nu=\frac1{\sqrt E}(-\cos\theta,-\sin\theta,r_z).
\end{equation}
The induced metric is $E\,\dd z^2+r^2\,\dd\theta^2$.  
A direct calculation gives
\begin{equation}\label{eq:axisymmetric-H}
 H=\frac12\left(
 -\frac{r_{zz}}{E^{3/2}}
 +\frac1{r\sqrt E}
 \right).
\end{equation}
For an axisymmetric scalar function $\varphi(z)$,
\begin{equation}\label{eq:axisymmetric-LB}
 \Delta\varphi
 =\frac1{r\sqrt E}\partial_z
 \left(\frac r{\sqrt E}\varphi_z\right).
\end{equation}
Since $\langle f_t,\nu\rangle=-r_t/\sqrt E$, equation \eqref{eq:sdf} is equivalent to
\begin{equation}\label{eq:profile-equation}
 r_t
 =\frac1r\partial_z\left(
 \frac r{\sqrt E}\partial_z H
 \right),
\end{equation}
with $H$ given by \eqref{eq:axisymmetric-H}.

This is the classical surface-of-revolution equation of Nichols and Mullins \cite{NicholsMullins1965}, written in the present averaged-mean-curvature and sign conventions.  
It is also the equation used in the early numerical and similarity studies \cite{BernoffBertozziWitelski1998,ColemanFalkMoakher1995,ColemanFalkMoakher1996,WongMiksisVoorheesDavis1998}.

\begin{remark}\label{rem:graph-stopping}
Rotational symmetry and reflection symmetry $z\mapsto-z$ are preserved by \cref{lem:symmetry}.  
They do not, by themselves, imply that the evolving surface remains a graph of the form \eqref{eq:axisymmetric-graph}, nor that $z=0$ remains the global minimum of cylindrical radius.  
Every neck statement below is therefore understood up to the first time at which the indicated graph or neck conditions fail.
For the orbit constructed in this paper, preservation of the graph, positivity and embeddedness is verified in \cref{lem:quantitative-embeddedness}.
\end{remark}

\subsection{Terminal similarity variables}

For a positive profile $Q$ define
\begin{equation}\label{eq:terminal-operator}
 \mathcal F[Q]
 :=\frac1{2Q}\partial_\zeta\left[
 \frac{Q}{\sqrt{1+Q_\zeta^2}}
 \partial_\zeta\left(
 \frac1{Q\sqrt{1+Q_\zeta^2}}
 -\frac{Q_{\zeta\zeta}}{(1+Q_\zeta^2)^{3/2}}
 \right)\right].
\end{equation}
The scaling identity associated with \eqref{eq:profile-equation} is
\[
 \mathcal F\bigl[AQ(\,\cdot\,/A)\bigr](z)
 =A^{-3}\mathcal F[Q](z/A).
\]
As long as the reflected waist radius $A(t)=r(0,t)$ is positive, put
\begin{equation}\label{eq:dynamic-terminal-scaling}
 \zeta=\frac z{A(t)},
 \qquad
 P(\zeta,s)=\frac{r(z,t)}{A(t)},
 \qquad
 \frac{\dd s}{\dd t}=A(t)^{-4}.
\end{equation}
Then $P(0,s)=1$ and
\begin{align}
 P_s&=\mathcal F[P]-\mathcal F[P](0)(P-\zeta P_\zeta),
 \label{eq:terminal-renormalised-pde}\\
 (\log A)_s&=\mathcal F[P](0).
 \label{eq:terminal-scale-ode}
\end{align}
Indeed, at fixed $z$,
\[
 r_t=A_t(P-\zeta P_\zeta)+A^{-3}P_s,
\]
and evaluation at $\zeta=0$ gives
$A_t=A^{-3}\mathcal F[P](0)$.

A stationary normalised profile with $\mathcal F[U](0)=-\mu<0$ therefore
satisfies
\begin{equation}\label{eq:similarity-profile}
 \mathcal F[U]+\mu(U-\zeta U')=0,
 \qquad U(0)=1,
 \qquad \mu>0.
\end{equation}
The associated noncompact similarity solution has
\begin{equation}\label{eq:exact-similarity-scale}
 A_t=-\mu A^{-3},
 \qquad
 A(t)=\{4\mu(T-t)\}^{1/4}.
\end{equation}

To compare normalisations, let $R_0$ denote the fundamental profile in \cite{BernoffBertozziWitelski1998}.  
Those authors use $H_{\rm BBW}=\kappa_1+\kappa_2=2H$, so, after matching normal orientations, their axisymmetric operator is $2\mathcal F$ and their profile equation is $\mathcal F[R]+\tfrac18(R-\eta R')=0$.  
Consequently the profiles are related by
\begin{equation}\label{eq:classical-profile-normalisation}
 R_0(\eta)=c\,U(\eta/c),
 \qquad c=(8\mu)^{1/4}.
\end{equation}
The enclosure proved below gives $c=0.701628717842213\ldots$ and $\alpha=1.037079401503446\ldots$, hence the half-angle $\arctan\alpha=46.042796186\ldots{}^\circ$.  
These values are consistent with Table~I of \cite{BernoffBertozziWitelski1998}, which reports $R_0(0)=0.701595$, slope $1.03714$ and half-angle $46.0444^\circ$.

\section{Rigorous construction of the classical positive conical profile}\label{sec:profile-new}

The similarity equation and its fundamental conical solution were obtained numerically in \cite{BernoffBertozziWitelski1998,WongMiksisVoorheesDavis1998}.  
The results below prove existence and positivity with outward-rounded bounds and derive tail, flux, volume and dissipation identities needed for compactification.  
The conical far field imposes more than the leading slope.  
Under the differentiated polyhomogeneous ansatz below, the first nontrivial algebraic correction is forced by the similarity equation.

\begin{proposition}[Forced conical tail]\label{prop:forced-conical-tail}
Let $\mu>0$, let $P$ solve \eqref{eq:similarity-profile} on a positive half-line, and suppose that, as $\zeta\to\infty$,
\begin{equation}\label{eq:conical-tail-ansatz}
 P(\zeta)=\alpha\zeta+b\zeta^{-3}+O(\zeta^{-7}),
 \qquad \alpha>0,
\end{equation}
with the corresponding differentiated expansions through order four.  
Then
\begin{equation}\label{eq:forced-conical-coefficient}
 b=-\frac{1}{8\mu\alpha(1+\alpha^2)}<0.
\end{equation}
If in addition
\[
 P(\zeta)=\alpha\zeta+b\zeta^{-3}
           +c\zeta^{-7}+O(\zeta^{-11})
\]
with differentiated asymptotics, then
\begin{equation}\label{eq:forced-conical-second-coefficient}
 c=-\frac{21(11\alpha^2+1)}
 {128\mu^2\alpha^3(1+\alpha^2)^3}.
\end{equation}
If $P$ extends smoothly and evenly across $\zeta=0$, \eqref{eq:similarity-profile} also has the integrated form
\begin{align}
 &\frac{P}{\sqrt{1+P_\zeta^2}}
 \partial_\zeta\left(
 \frac{1}{P\sqrt{1+P_\zeta^2}}
 -\frac{P_{\zeta\zeta}}{(1+P_\zeta^2)^{3/2}}
 \right)
 \nonumber\\
 &\hspace{8em}
 =\mu\zeta P^2-3\mu\int_0^\zeta P(\eta)^2\,\dd\eta.
 \label{eq:integrated-similarity-profile}
\end{align}
\end{proposition}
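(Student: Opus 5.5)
The plan is to obtain all three assertions by direct substitution into the profile equation \eqref{eq:similarity-profile}, using the scaling structure of $\mathcal F$ to organise the algebraic orders. The point is that $\mathcal F$ is homogeneous of degree $-3$ under dilations, so inserting $P=\alpha\zeta+b\zeta^{-3}+c\zeta^{-7}+\dots$ produces a series in powers $\zeta^{-3},\zeta^{-7},\zeta^{-11},\dots$. We then match coefficients against the similarity term. That term is linear and annihilates $\alpha\zeta$:
\[
 P-\zeta P_\zeta=4b\zeta^{-3}+8c\zeta^{-7}+O(\zeta^{-11}).
\]
The differentiated asymptotics through order four are exactly what is needed to expand $\mathcal F[P]$ termwise with controlled remainders.

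\textbf{Leading order.} First compute $\mathcal F$ on the exact cone $Q=\alpha\zeta$, with $E=1+\alpha^2$. The mean-curvature bracket is $1/(\alpha\zeta\sqrt E)$, with zero $Q_{\zeta\zeta}$ contribution. Its derivative is $-1/(\alpha\zeta^2\sqrt E)$. Multiplying by $Q/\sqrt E$ gives $-1/(E\zeta)$, differentiating gives $1/(E\zeta^2)$, and the prefactor $1/(2Q)$ then yields
\[
 \mathcal F[\alpha\zeta]=\frac{\zeta^{-3}}{2\alpha(1+\alpha^2)}.
\]
Relative to $\alpha\zeta$, the correction $b\zeta^{-3}$ has size $\zeta^{-4}$. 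Hence its linear contribution to $\mathcal F[P]$ is $O(\zeta^{-7})$, and nonlinear contributions are $O(\zeta^{-11})$. Matching the $\zeta^{-3}$ coefficient gives $1/(2\alpha E)+4\mu b=0$, which is \eqref{eq:forced-conical-coefficient}. The sign $b<0$ is immediate from $\mu,\alpha>0$.

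\textbf{Second order.} At order $\zeta^{-7}$, the only contributions are $8\mu c$ from the similarity term and the linearisation $D\mathcal F[\alpha\zeta]$ applied to $b\zeta^{-3}$. I would compute the latter by differentiating each stage of the nested formula \eqref{eq:terminal-operator} in the direction $\phi=\zeta^{-3}$. Perturbations enter through the following quantities:
\begin{itemize}
 \item $1/Q$, through $-\phi/(\alpha^2\zeta^2)$;
 \item $E^{-1/2}$, through $-\alpha\phi_\zeta/E^{3/2}$ with $\phi_\zeta=-3\zeta^{-4}$;
 \item the curvature term $-\phi_{\zeta\zeta}/E^{3/2}$;
 \item the outer factors $Q/\sqrt E$ and $1/(2Q)$.
\end{itemize}
Each piece is a monomial, so the result is $\lambda(\alpha)\,\zeta^{-7}$ for an explicit rational function $\lambda(\alpha)$ times powers of $E^{-1/2}$. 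The equation $\lambda b+8\mu c=0$, with $b$ inserted from the first step, should then produce \eqref{eq:forced-conical-second-coefficient}. This bookkeeping is the main obstacle: the four-stage operator mixes many terms. I would organise it by writing $\mathcal F[Q]=\frac{1}{2Q}\partial_\zeta\bigl[\tfrac{Q}{\sqrt{E}}\partial_\zeta \mathcal H\bigr]$, with $\mathcal H$ the bracketed curvature expression, and linearising $\mathcal H$, then the flux, then the divergence, separately. As a consistency check, the factor $(11\alpha^2+1)$ should arise from combining the azimuthal contribution (no $\alpha^2$) with the meridional and $E$-variation contributions (carrying $\alpha^2$).

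\textbf{Integrated form.} Multiply \eqref{eq:similarity-profile} by $2P$. The left side becomes the $\zeta$-derivative of the flux expression appearing on the left of \eqref{eq:integrated-similarity-profile}. On the right, write
\[
 -2\mu P(P-\zeta P_\zeta)=-2\mu P^2+\mu\bigl(\partial_\zeta(\zeta P^2)-P^2\bigr)=\mu\,\partial_\zeta(\zeta P^2)-3\mu P^2,
\]
using $\zeta PP_\zeta=\tfrac12\bigl(\partial_\zeta(\zeta P^2)-P^2\bigr)$. Integrate from $0$ to $\zeta$. If $P$ is smooth and even, the flux is odd in $\zeta$ (it is $P/\sqrt{1+P_\zeta^2}$, which is even, times the derivative of an even function), so it vanishes at $\zeta=0$. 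The term $\zeta P^2$ also vanishes at $\zeta=0$, and this gives \eqref{eq:integrated-similarity-profile}.
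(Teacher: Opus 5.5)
Your proposal is correct and follows the paper's proof essentially step for step. It matches $\mathcal F[\alpha\zeta]=\zeta^{-3}/(2\alpha(1+\alpha^2))$ against $\mu(P-\zeta P_\zeta)=4\mu b\zeta^{-3}+\dots$, expands through order $\zeta^{-7}$ for $c$, and obtains the integrated form by multiplying by $2P$, using $2P(P-\zeta P_\zeta)=3P^2-(\zeta P^2)_\zeta$ and evenness. The bookkeeping you defer does close: $D\mathcal F[\alpha\zeta]\zeta^{-3}=-\frac{21(11\alpha^2+1)}{2\alpha^2(1+\alpha^2)^2}\zeta^{-7}$, which is consistent with the cone linearisation \eqref{eq:exact-cone-linearisation}, and $\lambda b+8\mu c=0$ then gives exactly \eqref{eq:forced-conical-second-coefficient}.
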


\begin{proof}
For the ansatz in \eqref{eq:conical-tail-ansatz} one has
\[
 \mathcal F[P]
 =\frac{1}{2\alpha(1+\alpha^2)}\zeta^{-3}
  +O(\zeta^{-7}).
\]
In particular, the coefficient $b$ contributes only at order $\zeta^{-7}$ to this expression, whereas
\[
 P-\zeta P_\zeta=4b\zeta^{-3}+O(\zeta^{-7}).
\]
Equation \eqref{eq:forced-conical-coefficient} follows from \eqref{eq:similarity-profile}.  
Expanding the same equation through order $\zeta^{-7}$ gives \eqref{eq:forced-conical-second-coefficient}.  
To obtain \eqref{eq:integrated-similarity-profile}, multiply \eqref{eq:similarity-profile} by $2P$ and integrate from zero to $\zeta$.  
Evenness makes the flux vanish at zero, and
\[
 2P(P-\zeta P_\zeta)
 =3P^2-(\zeta P^2)_\zeta.
\]
\end{proof}

For shooting from the axis it is convenient to replace the fourth-order scalar equation by a first-order flux system.
This formulation also reveals the volume constraint satisfied by every conical profile.

\begin{proposition}[Flux system and volume neutrality]
\label{prop:pincher-flux-volume}
Let $U$ be a smooth positive even solution of \eqref{eq:similarity-profile}, normalised by $U(0)=1$, and suppose that it has the differentiated conical expansion
\[
 U(\zeta)=\alpha\zeta+b\zeta^{-3}+O(\zeta^{-7})
 \qquad(\zeta\to+\infty).
\]
Put
\[
 q=U',
 \qquad
 S=(1+q^2)^{1/2},
 \qquad
 K=\frac1{US}-\frac{q'}{S^3},
 \qquad
 J=\frac U S K'.
\]
Then the similarity equation is equivalent on $[0,\infty)$ to
\begin{align}
 U'&=q,
 &q'&=\frac{1+q^2}{U}-(1+q^2)^{3/2}K,
 \label{eq:pincher-flux-system-a}\\
 K'&=\frac{\sqrt{1+q^2}}{U}J,
 &J'&=-2\mu U(U-\zeta q),
 \label{eq:pincher-flux-system-b}
\end{align}
with
\begin{equation}\label{eq:pincher-flux-centre-data}
 U(0)=1,
 \qquad q(0)=J(0)=0.
\end{equation}
Moreover,
\begin{equation}\label{eq:pincher-integrated-flux}
 J(\zeta)
 =\mu\left\{
 \zeta U(\zeta)^2-3\int_0^\zeta U(\eta)^2\,\dd\eta
 \right\},
\end{equation}
and every such conical profile satisfies 
\begin{equation}\label{eq:pincher-volume-neutrality}
 \int_0^\infty
 \bigl(U(\zeta)^2-\alpha^2\zeta^2\bigr)\,\dd\zeta=0.
\end{equation}
\end{proposition}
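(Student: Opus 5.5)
The plan has three steps: rewrite the equation as a first-order system, integrate the last equation using evenness, and then read off the volume identity from the tail.

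\emph{The flux system.} The definitions of $q,S,K,J$ are direct substitutions into \eqref{eq:terminal-operator}. Since $S=\sqrt{1+q^2}$, the curvature term in the operator is
\[
\frac1{US}-\frac{q'}{S^3}=K .
\]
Solving this for $q'$ gives the second equation in \eqref{eq:pincher-flux-system-a}. The definition $J=\frac{U}{S}K'$ rearranges to the first equation in \eqref{eq:pincher-flux-system-b}. Finally, $\mathcal F[U]=\frac1{2U}J'$, so \eqref{eq:similarity-profile} becomes $J'=-2\mu U(U-\zeta q)$.

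The argument runs equally well in reverse: any smooth solution of the system with $U>0$ yields a solution of the fourth-order equation. So the two formulations are equivalent wherever $U>0$. For the centre data, evenness of $U$ makes $q=U'$ odd, so $q(0)=0$. It also makes $K$ even, hence $K'$ odd, hence $J(0)=0$.

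\emph{The integrated flux.} Identity \eqref{eq:pincher-integrated-flux} is the integrated form \eqref{eq:integrated-similarity-profile} of \cref{prop:forced-conical-tail}. To check it directly, use
\[
2U(U-\zeta U')=3U^2-(\zeta U^2)'
\]
and integrate $J'$ from $0$ to $\zeta$, using $J(0)=0$.

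\emph{Volume neutrality.} I would evaluate both sides of \eqref{eq:pincher-integrated-flux} as $\zeta\to\infty$, using the differentiated tail.

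\emph{Left side.} On the tail, $q=\alpha+O(\zeta^{-4})$ and $q'=O(\zeta^{-5})$. Hence
\[
K=\frac{1}{\alpha\sqrt{1+\alpha^2}}\,\zeta^{-1}+O(\zeta^{-5}),
\]
so $K'=O(\zeta^{-2})$ and $J=\frac US K'=O(\zeta^{-1})\to0$.

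\emph{Right side.} Write $U^2=\alpha^2\eta^2+2\alpha b\eta^{-2}+O(\eta^{-6})$. Then $D(\eta):=U^2-\alpha^2\eta^2$ is integrable at infinity, and it is smooth on $[0,\infty)$, so $\int_0^\infty D$ converges. Split
\[
3\int_0^\zeta U^2 \,\dd\eta=\alpha^2\zeta^3+3\int_0^\zeta D\,\dd\eta .
\]
Also $\zeta U(\zeta)^2=\alpha^2\zeta^3+2\alpha b\zeta^{-1}+O(\zeta^{-5})$. The $\alpha^2\zeta^3$ terms cancel, leaving
\[
\frac{J(\zeta)}{\mu}=-3\int_0^\zeta D\,\dd\eta+o(1).
\]
Letting $\zeta\to\infty$ and using $J\to0$ gives $\int_0^\infty D=0$, which is \eqref{eq:pincher-volume-neutrality}.

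The main obstacle is the decay bookkeeping in the left-side estimate: the leading $\zeta^{-1}$ term of $K$ must differentiate to $O(\zeta^{-2})$ so that $J$ actually tends to zero. This is where the hypothesis of differentiated asymptotics through order three on $U$ (so that $K'$ is controlled) is essential.
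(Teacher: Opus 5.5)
Your proposal is correct and follows essentially the same route as the paper: you read off the flux system from the definitions of $q,K,J$ together with $\mathcal F[U]=\frac1{2U}J'$, integrate $J'$ from the axis using $2U(U-\zeta U')=3U^2-(\zeta U^2)'$ and $J(0)=0$, cancel the pure conical $\alpha^2\zeta^3$ terms, and let $\zeta\to\infty$ using $J\to0$ and $\zeta\bigl(U^2-\alpha^2\zeta^2\bigr)\to0$. As you note, $J\to0$ needs the differentiated expansion through $U'''$, and this is exactly how the paper obtains $J(\zeta)=-\frac{1}{1+\alpha^2}\zeta^{-1}+O(\zeta^{-5})$.
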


\begin{proof}
The definition of $K$ gives the equation for $q'$, while the definition of $J$ gives the equation for $K'$.  
Since
\[
 \mathcal F[U]=\frac1{2U}J',
\]
the similarity equation gives the final equation in \eqref{eq:pincher-flux-system-b}.  
Evenness gives \eqref{eq:pincher-flux-centre-data}.  
Integration from zero, followed by integration by parts in the term containing $\eta UU'$, yields \eqref{eq:pincher-integrated-flux}.

Set $d(\zeta)=U(\zeta)^2-\alpha^2\zeta^2$.  
The conical expansion gives $d(\zeta)=2\alpha b\zeta^{-2}+O(\zeta^{-6})$, so $d$ is integrable at infinity.  
The differentiated expansion also gives
\[
 J(\zeta)=-\frac{1}{1+\alpha^2}\zeta^{-1}+O(\zeta^{-5}),
\]
so in particular $J(\zeta)\to0$.  
The pure conical terms cancel in \eqref{eq:pincher-integrated-flux}, and hence
\[
 \frac{J(\zeta)}\mu
 =\zeta d(\zeta)-3\int_0^\zeta d(\eta)\,\dd\eta.
\]
The left side tends to zero, as does $\zeta d(\zeta)$.  
Passing to the limit proves \eqref{eq:pincher-volume-neutrality}.
\end{proof}

Volume neutrality is only one of the global identities carried by the profile.
We next record the corresponding renormalised area and dissipation identity.

\begin{proposition}[Relative area and dissipation identity]
\label{prop:pincher-relative-area}
Let $U$ satisfy the hypotheses of \cref{prop:pincher-flux-volume}, put $H_U=K/2$, and define
\begin{align}
 \mathscr A_{\rm ren}[U]
 &:=
 2\pi\int_{\R}
 \left\{
 U(1+U'^2)^{1/2}
 -\alpha(1+\alpha^2)^{1/2}|\zeta|
 \right\}\,\dd\zeta,
 \label{eq:pincher-relative-area}\\
 \mathscr D[U]
 &:=
 2\pi\int_{\R}
 \frac{U}{(1+U'^2)^{1/2}}|H_U'|^2\,\dd\zeta.
 \label{eq:pincher-profile-dissipation}
\end{align}
Then both integrals are finite and 
\begin{equation}\label{eq:pincher-pohozaev}
 \mathscr D[U]
 =\mu\mathscr A_{\rm ren}[U]>0.
\end{equation}
\end{proposition}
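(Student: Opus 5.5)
The identity \eqref{eq:pincher-pohozaev} is a Pohozaev-type relation. I plan to pair the flux formulation of \cref{prop:pincher-flux-volume} with the dilation generator $\varphi:=U-\zeta U'$. On the renormalised area this generator acts as twice the identity, because the truncated cone is scale invariant and area scales quadratically. I will work on $[-R,R]$ and let $R\to\infty$, controlling every boundary term with the differentiated conical expansion. By evenness, each integral over $\R$ is twice the integral over $[0,\infty)$, so the flux system applies on each half.

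First I rewrite the dissipation. Since $H_U=K/2$ and $J=(U/S)K'$, we have $\frac{U}{S}|H_U'|^2=\frac14 JK'$. Integrating by parts and using $J'=-2\mu U\varphi$ from \eqref{eq:pincher-flux-system-b} gives
\[
 \int_{-R}^R JK'\,\dd\zeta=\bigl[JK\bigr]_{-R}^{R}+2\mu\int_{-R}^R UK\varphi\,\dd\zeta .
\]
The proof of \cref{prop:pincher-flux-volume} gives $J=O(\zeta^{-1})$, and $K\sim 1/(US)=O(\zeta^{-1})$, so the boundary term vanishes as $R\to\infty$. Finiteness of $\mathscr D[U]$ follows from the same decay: the integrand $\frac14JK'$ is $O(\zeta^{-3})$.

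Next I evaluate $\int UK\varphi$ as a first variation of area. The identity $(Uq/S)'=q^2/S+Uq'/S^3$ shows $UK=S-(Uq/S)'$. Integrating by parts once, and using $\varphi'=-\zeta q'$ together with $qS+Uqq'/S=(US)'-S$, I expect
\[
 \int_{-R}^R UK\varphi\,\dd\zeta
 =2\int_{-R}^R US\,\dd\zeta-\bigl[\zeta US\bigr]_{-R}^{R}-\Bigl[\frac{\varphi Uq}{S}\Bigr]_{-R}^{R}.
\]
I then subtract the cone contribution $2\int_{-R}^R\alpha\sqrt{1+\alpha^2}|\zeta|\,\dd\zeta=2\alpha\sqrt{1+\alpha^2}R^2$. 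By \eqref{eq:conical-tail-ansatz}, $\zeta US=\alpha\sqrt{1+\alpha^2}R^2+O(R^{-2})$ at each end, which cancels exactly against this subtraction. The remaining term is $O(R^{-2})$, since $\varphi=4b\zeta^{-3}+O(\zeta^{-7})$ while $Uq/S=O(\zeta)$. The same expansion shows that the renormalised area integrand is $O(\zeta^{-3})$, so $\mathscr A_{\rm ren}[U]$ is finite and
\[
 2\pi\int_\R UK\varphi\,\dd\zeta=2\mathscr A_{\rm ren}[U].
\]
Combining the two steps gives $\mathscr D[U]=\frac{\pi}{2}\cdot 2\mu\int_\R UK\varphi\,\dd\zeta=\mu\mathscr A_{\rm ren}[U]$.

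For strict positivity it suffices to show $\mathscr D[U]>0$, that is, $H_U'\not\equiv0$. If $H_U'\equiv0$, then $H_U$ is constant, and $H_U\to0$ along the conical end forces $H_U\equiv0$. The only minimal surfaces of revolution are catenoids, whose meridians grow exponentially rather than linearly, so they cannot have the conical expansion. The main obstacle is the careful bookkeeping of the boundary terms. In particular, the leading $R^2$ cone growth must cancel exactly between $2\int US$ and $[\zeta US]$, which requires the second-order accuracy of the expansion; the $b\zeta^{-3}$ correction enters only at order $R^{-2}$.
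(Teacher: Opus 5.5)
Your argument is correct and is essentially the paper's. The paper obtains $2\mathscr A_{\rm ren}[U]=2\pi\int_{\R}UK\,(U-\zeta U')\,\dd\zeta$ by differentiating the scaling law $\mathscr A_{\rm ren}[U_\lambda]=\lambda^2\mathscr A_{\rm ren}[U]$ at $\lambda=1$, and your explicit boundary bookkeeping on $[-R,R]$ is the hands-on version of that step; it then pairs the profile equation with $UH_U$, exactly as you pair $J'$ with $K$.

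One slip: the identity you need is $qS+Uqq'/S=(US)'$, not $(US)'-S$. Your displayed formula for $\int_{-R}^R UK\varphi\,\dd\zeta$ is the correct consequence of the former, so nothing downstream changes.

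For positivity, your catenoid argument is valid, but the paper avoids that classification. If $H_U'\equiv0$ then $J\equiv0$, hence $-2\mu U\varphi=J'\equiv0$ and so $\varphi\equiv0$. This contradicts $\varphi(0)=U(0)=1$.
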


\begin{proof}
The conical expansion gives convergence.  
For
\[
 U_\lambda(\zeta)=\lambda U(\zeta/\lambda)
\]
the limiting cone is unchanged and
\[
 \mathscr A_{\rm ren}[U_\lambda]
 =\lambda^2\mathscr A_{\rm ren}[U].
\]
The variation at $\lambda=1$ is $DU=U-\zeta U'$.  
Direct first variation and integration by parts give
\begin{align*}
 2\mathscr A_{\rm ren}[U]
 &=2\pi\int_{\R}
 \left\{
 S\,DU+\frac{UU'}S(DU)'
 \right\}\,\dd\zeta\\
 &=2\pi\int_{\R}UK\,DU\,\dd\zeta
 =4\pi\int_{\R}UH_U\,DU\,\dd\zeta.
\end{align*}
The boundary term vanishes by the differentiated conical expansion.  
Since
\[
 \mathcal F[U]
 =\frac1U\left(\frac U S H_U'\right)'
 =-\mu DU,
\]
multiplication by $UH_U$ and a second integration by parts yield
\[
 \int_{\R}\frac U S|H_U'|^2\,\dd\zeta
 =\mu\int_{\R}UH_U\,DU\,\dd\zeta.
\]
This proves \eqref{eq:pincher-pohozaev}.  
If the left side vanished, then $H_U$ would be constant and the profile equation would give $DU\equiv0$, contradicting $U(0)=1$.  
Hence both sides are strictly positive.
\end{proof}

For the higher tail coefficients used below, define recursively
\begin{equation}\label{eq:conical-tail-recursion}
 c_n(\alpha,\mu)
 =-\frac1{4n\mu}
 [\zeta^{1-4n}]
 \mathcal F\left[
 \alpha\zeta+\sum_{j=1}^{n-1}
 c_j(\alpha,\mu)\zeta^{1-4j}\right].
\end{equation}
Here $[\zeta^m]$ denotes extraction of the coefficient of $\zeta^m$ in the formal Laurent expansion at infinity.
This agrees with \eqref{eq:forced-conical-coefficient} and \eqref{eq:forced-conical-second-coefficient} for $n=1,2$.

The leading conical mode and homogeneous WKB splitting---one decaying and two growing exponential branches---were derived formally in \cite[Section~4.1]{BernoffBertozziWitelski1998}.  
The finite-base Volterra chart, differentiated remainder bounds and validated matching used here are new.

\begin{proposition}[Finite-base conical tail chart]
\label{prop:finite-base-tail-chart}
Let $\mathcal P\Subset(0,\infty)^2$ be a compact set of parameters $(\alpha,\mu)$, and let $N\ge1$.  
There is $L_0<\infty$ with the following property.  
For every $L\ge L_0$, every $(\alpha,\mu)\in\mathcal P$, and every sufficiently small real number $C_L$, there is a unique solution
\[
 U_\infty(\,\cdot\,;\alpha,\mu,C_L)
\]
of \eqref{eq:similarity-profile} on $[L,\infty)$ which has conical slope $\alpha$, contains no growing WKB component, and has finite-base stable coordinate $C_L$ in the normalisation below.  
It is positive, has the differentiated expansion
\[
 U_\infty(\zeta)
 =\alpha\zeta+\sum_{n=1}^Nc_n(\alpha,\mu)\zeta^{1-4n}
 +O(\zeta^{-4N-3}),
\]
and bootstraps to the full polyhomogeneous expansion.  
The map
\[
 (\alpha,\mu,C_L)\longmapsto
 (U_\infty,q_\infty,K_\infty,J_\infty)(L)
\]
is $C^1$.
\end{proposition}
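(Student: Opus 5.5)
We construct $U_\infty$ as a perturbation of a truncated polyhomogeneous approximant, solved by a Volterra integral equation on $[L,\infty)$. Fix $N$ and set
\[
 U_N(\zeta)=\alpha\zeta+\sum_{n=1}^{N}c_n(\alpha,\mu)\zeta^{1-4n}.
\]
By the recursion \eqref{eq:conical-tail-recursion}, the residual $\mathcal R_N:=\mathcal F[U_N]+\mu(U_N-\zeta U_N')$ is $O(\zeta^{-4N-3})$ as a Laurent series, with all derivatives. The key identity is that $D$ lowers the order-$\zeta^{1-4n}$ term by the factor $4n$, so each coefficient is determined. Uniformity on the compact set $\mathcal P$ is clear because the $c_n$ are rational in $(\alpha,\mu)$ with denominators bounded away from zero. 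We then write $U=U_N+w$, use the flux variables $(U,q,K,J)$ of \cref{prop:pincher-flux-volume}, and linearise about the conical state. The linearised operator is
\[
 -\tfrac{1}{2}(1+\alpha^2)^{-2}\partial_\zeta^4+\text{lower order}+\mu(1-\zeta\partial_\zeta).
\]

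Next we diagonalise the homogeneous linear problem at infinity. The WKB ansatz $w\sim\exp(\int\lambda)$ with $\lambda^3\sim 2\mu(1+\alpha^2)^2\zeta$ gives three roots $\lambda_j\sim\omega_j(\kappa\zeta)^{1/3}$, $\omega_j^3=1$: one decaying branch and two growing oscillatory ones. The fourth solution is algebraic, $w\sim\zeta$ (the $D$-kernel, i.e.\ a slope change). Following Levinson's asymptotic integration \cite{Levinson1948}, we change variables to the WKB frame. This is a $4\times4$ system of the form $Y'=(\Lambda(\zeta)+R(\zeta))Y$, where $\Lambda$ is diagonal and $R$ is integrable relative to the gaps between the $\mathrm{Re}\,\lambda_j$, which grow like $\zeta^{1/3}$. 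We impose the following conditions.
\begin{itemize}
 \item The growing components are integrated from $+\infty$. This encodes ``no growing WKB component.''
 \item The slope component is integrated from $+\infty$. This fixes the conical slope at exactly $\alpha$.
 \item The decaying component is integrated from $L$ with prescribed value $C_L$.
\end{itemize}
This produces a Volterra/Fredholm-type integral equation
\[
 Y=\mathcal T(Y;\alpha,\mu,C_L),
\]
forced by the residual $\mathcal R_N$, together with the quadratic nonlinearity in $w$. We work in the weighted norm $\sup_{\zeta\ge L}\zeta^{4N+3}|Y|$ plus derivatives.

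Then we show $\mathcal T$ is a contraction for $L\ge L_0$ and $|C_L|$ small. The kernel bounds give a gain $\lesssim L^{-\gamma}$ for some $\gamma>0$. This gain comes from integrating against $e^{\pm c\zeta^{4/3}}$, which costs a factor $\zeta^{-1/3}$ relative to the weight, and from the polynomial decay of the nonlinear terms. The contraction yields existence, uniqueness and the differentiated remainder $O(\zeta^{-4N-3})$. Positivity follows because $U_N\ge\alpha\zeta/2$ on $[L,\infty)$ once $L$ is large. $C^1$ dependence on $(\alpha,\mu,C_L)$ follows from the uniform contraction principle, since the $c_n$ and the WKB frame depend smoothly on the parameters. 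Evaluating at $\zeta=L$ gives the stated $C^1$ map. Because uniqueness holds for each $N$, the solution is independent of $N$, which bootstraps to the full polyhomogeneous expansion.

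The main obstacle is the uniform WKB diagonalisation. We must verify that the conjugation to the WKB frame is valid with error terms of relative size $O(\zeta^{-4/3})$, uniformly in $(\alpha,\mu)\in\mathcal P$. We must also verify that the decaying exponential coexists with the algebraic weight: $C_L$ enters as an exponentially small term, so it must be placed in a separate coordinate rather than inside the polynomial weight. We would first handle this by splitting $Y=C_L\,Y_{\rm dec}+Y_{\rm alg}$, and then perform the contraction in the product norm.
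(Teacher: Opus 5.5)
Your proposal follows essentially the paper's route: subtract the truncated expansion, split into centre (slope), stable and two unstable WKB coordinates, integrate the slope and growing components from infinity and the decaying one from the base with value $C_L$, and close a weighted contraction for large $L$. The paper does this after the explicit change $t=\kappa\zeta^{4/3}$, $v=\zeta^{-5/3}y(t)$, which makes the limiting roots the constants $0,-1,\rho_\pm$ and puts the $t^{-1}$ coupling matrix $B_1$ entirely in the centre column, and it then uses an anisotropic weight. Two small corrections: the WKB exponents satisfy $\lambda^3\sim-2\mu(1+\alpha^2)^2\zeta$, i.e.\ cube roots of $-1$ rather than of $1$, and it is this sign that gives the one-decaying/two-growing split you state; and the slope kernel $\zeta\int_\zeta^\infty s^{-2}(\cdot)\,\dd s$ gives no decay gain, so smallness in the centre row must come from the couplings into it rather than from the exponential kernels (in the paper, WKB components feed the centre only at order $t^{-2}$, which is exactly what your $O(\zeta^{-4/3})$ frame accuracy has to deliver).
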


\begin{proof}
Put $s_\alpha^2=1+\alpha^2$.  
The linearisation of \eqref{eq:similarity-profile} at $U=\alpha\zeta$ is
\begin{align}
 \mathcal A_\alpha v={}&
 -\frac{v^{(4)}}{2s_\alpha^4}
 -\frac{v'''}{s_\alpha^4\zeta}
 +\frac{\alpha^2-1}{2\alpha^2s_\alpha^4\zeta^2}v''
 +\frac{v'}{\alpha^2s_\alpha^4\zeta^3}
 -\frac{3v}{2\alpha^2s_\alpha^2\zeta^4}.
 \label{eq:exact-cone-linearisation}
\end{align}
Set
\[
 \kappa=\frac34\{2\mu(1+\alpha^2)^2\}^{1/3},
 \qquad t=\kappa\zeta^{4/3},
 \qquad v=\zeta^{-5/3}y(t).
\]
Then $\mathcal A_\alpha v+\mu(v-\zeta v')=0$ is equivalent to
\begin{equation}
 y^{(4)}-\frac2t y'''
 +\frac{A_2}{t^2}y''
 +\left(1-\frac{A_1}{t^3}\right)y'
 +\left(-\frac2t+\frac{A_0}{t^4}\right)y=0,
 \label{eq:exact-cone-normal-form}
\end{equation}
where
\[
 A_2=\frac{47}{8}+\frac{9}{16\alpha^2},\quad
 A_1=\frac{771}{64}+\frac{135}{64\alpha^2},\quad
 A_0=\frac{3403}{256}+\frac{873}{256\alpha^2}.
\]
The limiting characteristic roots are $0,-1,\rho_+,\rho_-$, where $\rho_\pm=(1\pm i\sqrt3)/2$.

Let
\[
 P_N(\zeta)=\alpha\zeta+
 \sum_{n=1}^Nc_n(\alpha,\mu)\zeta^{1-4n},
 \qquad
 y(t)=\zeta^{5/3}\{U(\zeta)-P_N(\zeta)\},
\]
put $X=(y,y_t,y_{tt},y_{ttt})^T$, and let $V$ have columns $(1,\lambda,\lambda^2,\lambda^3)^T$ in the order $0,-1,\rho_+,\rho_-$.  
For $z=V^{-1}X=(z_0,z_s,z_+,z_-)$, direct substitution gives
\begin{equation}
 z'=\operatorname{diag}(0,-1,\rho_+,\rho_-)z
    +\frac1tB_1z+\mathcal R_N(t,z),
 \label{eq:finite-base-tail-normal-form}
\end{equation}
with
\begin{equation}
\label{eq:finite-base-tail-remainder}
 B_1=
 \begin{pmatrix}
 2&0&0&0\\
 -2/3&0&0&0\\
 -2/3&0&0&0\\
 -2/3&0&0&0
 \end{pmatrix},
 \qquad
 |\mathcal R_N(t,z)|
 \le C_{\mathcal P}\{t^{-2}|z|+t^{-1}|z|^2+t^{-3N-2}\}.
\end{equation}
The last term follows because the coefficients $c_1,\ldots,c_N$ cancel the algebraic residual to the corresponding order.

Let $t_0=\kappa L^{4/3}$.  
Move $2z_0/t$ to the centre diagonal and call the remaining right side $\mathcal Q$.  
Fixing the slope, killing the growing WKB modes and imposing $z_s(t_0)=C_L$ are equivalent to
\begin{align}
 z_0(t)&=-t^2\int_t^\infty s^{-2}\mathcal Q_0(s,z(s))\,\dd s,
 \label{eq:finite-base-centre-volterra}\\
 z_s(t)&=e^{-(t-t_0)}C_L
 +\int_{t_0}^t e^{-(t-s)}\mathcal Q_s(s,z(s))\,\dd s,
 \label{eq:finite-base-stable-volterra}\\
 z_\pm(t)&=-\int_t^\infty e^{\rho_\pm(t-s)}
 \mathcal Q_\pm(s,z(s))\,\dd s.
 \label{eq:finite-base-unstable-volterra}
\end{align}
We may think of $C_L$ as a finite-base coordinate.

Before applying the fixed-point theorem, increase $L_0$ and restrict the chart ball so that its entire tube satisfies
\[
 U(\zeta)\ge\frac12\alpha_{\min}\zeta,
 \qquad
 \alpha_{\min}=\min_{(\alpha,\mu)\in\mathcal P}\alpha.
\]
The rational profile vector field is therefore regular throughout the tube.
Put $r=3N$, separate the free term $C_Le^{-(t-t_0)}$, and use
\[
 \|z\|_{r,t_0}
 =\sup_{t\ge t_0}\left(\frac t{t_0}\right)^r
 \max\left\{\frac t{t_0}|z_0(t)|,
 |z_s(t)|,|z_+(t)|,|z_-(t)|\right\}.
\]
Work on the real subspace $z_-=\overline{z_+}$.  
For $t_0>r$ the three Volterra kernels are bounded on this anisotropic space.  
The $t^{-2}$ linear remainder maps a WKB component into the centre component with norm $O(t_0^{-1})$; the triangular $B_1/t$ term maps the centre component back into the WKB block with the same gain.  
The corrected quadratic remainder is $O(t^{-1}|z|^2)$; its Lipschitz constant is controlled by the chart-ball radius, which is made small together with $C_L$ and the algebraic forcing.
The linear block feedback tends to zero as $t_0\to\infty$.  
The contraction mapping theorem gives existence and uniqueness.  
For parameter dependence first translate the moving half-line by $\tau=t-t_0(\alpha,\mu)$; the uniform parameter-dependent contraction theorem then gives $C^1$ dependence.

The Volterra equations bootstrap the fixed-point bound to $z_0=O(t^{-r-1})$ and the non-free WKB components $O(t^{-r-2})$.  
Since $r=3N$, returning to $U$ proves the displayed $O(\zeta^{-4N-3})$ expansion; iteration of the equation gives all differentiated orders.
The same tube bound records positivity of every chart member.
\end{proof}

The tail chart gives one half of the desired profile.
We now formulate the finite-dimensional matching problem that joins it to the solution shot from the centre.

\begin{proposition}[Finite centre--tail matching]
\label{prop:pincher-profile-certificate}
Let $Y=(U,q,K,J)$ denote the flux variables.  
For centre parameters $(\gamma,\mu)$, let $Y_{\rm c}(\cdot;\gamma,\mu)$ solve \eqref{eq:pincher-flux-system-a}--\eqref{eq:pincher-flux-system-b} with
\begin{equation}\label{eq:pincher-centre-shooting-data}
 Y_{\rm c}(0;\gamma,\mu)=(1,0,1-\gamma,0).
\end{equation}
Take $L\ge L_0$ in \cref{prop:finite-base-tail-chart}, and let $Y_\infty(\cdot;\alpha,\mu,C_L)$ be that chart.  
Define
\begin{equation}\label{eq:pincher-matching-map}
G(\gamma,\mu,\alpha,C_L)
 =Y_{\rm c}(L;\gamma,\mu)
  -Y_\infty(L;\alpha,\mu,C_L).
\end{equation}
If $G$ has a zero in a box on which the centre and tail solutions remain positive, then \eqref{eq:similarity-profile} has a smooth positive even conical solution.
\end{proposition}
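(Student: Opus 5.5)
The plan is to glue the two solutions at $\zeta=L$ using uniqueness for the flux ODE system, and then to check that the glued function is smooth, even, positive and conical. Suppose $G(\gamma,\mu,\alpha,C_L)=0$ at a point of the box. Then $Y_{\rm c}(L)=Y_\infty(L)$ for the common value of $\mu$. Both $Y_{\rm c}$ and $Y_\infty$ solve the first-order system \eqref{eq:pincher-flux-system-a}--\eqref{eq:pincher-flux-system-b} with the same $\mu$; for $Y_\infty$ this is because it solves \eqref{eq:similarity-profile} on $[L,\infty)$ and hence, with $q,K,J$ defined as in \cref{prop:pincher-flux-volume}, solves the system there. The vector field is smooth (indeed rational in $U,q$ and $\sqrt{1+q^2}$) wherever $U>0$. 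The system is non-autonomous only through the term $\zeta q$ in $J'$, which is polynomial in $\zeta$. By assumption both solutions stay positive, so Picard--Lindel\"of uniqueness at $\zeta=L$ shows that $Y_{\rm c}$ extends past $L$ and coincides with $Y_\infty$ on $[L,\infty)$. Define $Y$ on $[0,\infty)$ as $Y_{\rm c}$ on $[0,L]$ and $Y_\infty$ on $[L,\infty)$. This is a single smooth, positive solution of the system.

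Next I would check that $U$ actually solves \eqref{eq:similarity-profile}. From the first two equations, $K=1/(US)-q'/S^3$. From the third, $J=(U/S)K'$. The fourth then gives $\mathcal F[U]=J'/(2U)=-\mu(U-\zeta U')$, so this is just the equivalence in \cref{prop:pincher-flux-volume} read backwards. The data give $U(0)=1$.

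For evenness, extend $U$ by $U(-\zeta)=U(\zeta)$. Under $\zeta\mapsto-\zeta$, $U\mapsto U$, $q\mapsto-q$, $K\mapsto K$, $J\mapsto-J$, the system is invariant: $J'=-2\mu U(U-\zeta q)$ is even in $\zeta$ and becomes the correct equation for $-J(-\zeta)$. At $\zeta=0$ the data $(1,0,1-\gamma,0)$ lie in the fixed set $q=J=0$. So the reflected solution $\tilde Y(\zeta)=(U,-q,K,-J)(-\zeta)$ solves the same system with the same data at $0$, and uniqueness gives $\tilde Y=Y$ on the domain where both are defined. Hence the odd-order derivatives of $U$ at $0$ are consistent, and the even extension is $C^\infty$ across the axis because it is a solution of a smooth ODE through $0$. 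There is no singularity there, since $U(0)=1>0$.

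Positivity holds on $[0,L]$ by hypothesis, and on $[L,\infty)$ by \cref{prop:finite-base-tail-chart}, where the chart tube gives $U\ge\frac12\alpha_{\min}\zeta$. The conical expansion with slope $\alpha$ and forced coefficients $c_n$ also comes from that proposition, since the glued solution equals the chart member on $[L,\infty)$.

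The only delicate point is the smoothness of the vector field where it is used. Smoothness needs $U>0$ along both arcs, which the box hypothesis supplies. It also needs the chart to take values in the same flux coordinates, which is why $Y_\infty(L)$ is defined via $(U_\infty,q_\infty,K_\infty,J_\infty)$. Finding the zero of $G$ is not part of this statement; that is the later interval and degree step.
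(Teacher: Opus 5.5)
Your proposal is correct and takes the same route as the paper: at a zero of $G$, uniqueness for the first-order flux system patches the centre and tail solutions at $L$. The centre data $q(0)=J(0)=0$, together with the reflection invariance $(U,q,K,J)(\zeta)\mapsto(U,-q,K,-J)(-\zeta)$, give the smooth even extension, and positivity gives the regular graph. You add detail the paper leaves implicit, namely the explicit symmetry check and recovering $\mathcal F[U]=J'/(2U)=-\mu(U-\zeta U')$ from the system, but the argument is the same.
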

\begin{proof}
At a zero of $G$, the centre and tail solutions agree in all four variables of the first-order system.  
Uniqueness patches them at $L$.  
The conditions $q(0)=J(0)=0$ give a smooth even extension, while positivity makes its surface of revolution a regular embedded graph.
\end{proof}

Our task is thus to locate a zero of $G$ in an appropriate box.
We set up this box in Lemma \ref{lem:validated-compactified-tail-bounds} and then determine signs on the faces of the box in Lemma \ref{lem:validated-centre-miranda-cube}.
These lemmata isolate every computer-assisted step.  
A full and detailed account of the working of this code, including the $256$-bit Arb generators, all $400$ centre-step records, and a canonical receipt
\path{certificate_receipt.json} can be downloaded here: \href{https://glenw83.github.io/conical_pincher_certificate.zip}{conical\_pincher\_certificate.zip} \cite{Wnumerical}.
This archive is also included in the ancillary files in the arXiv submission, with SHA--256
\begin{center}
\small\ttfamily
5ea8b6e96a743d8f5f782496022d07c754961d3d1f1c59a14577e538b27478e9
\end{center}
The receipt has SHA--256
\begin{center}
\small\ttfamily
8aa914997f8b3bf327bf23158aaff4b07e634c88cf99affa39ed2e5d55fc6797
\end{center}
and binds the proof-facing source manifest with aggregate digest
\begin{center}
\small\ttfamily
ae3b39ea88f622355a06b42d0b7cb67150438db99e0a37324a86d1137e0e8fca
\end{center}
The calculation is fully reproducible.
The author used CPython $3.12.13$, python-flint $0.8.0$ and SymPy $1.13.3$.
Included in the archive is precise reproducibility instructions, which recomputes the tail recursion, all $1024\times4\times4$ tail boxes and the $400$ order-$16$ centre slabs, and then rigorously verifies the result.
No binary floating-point number is a proof input.  
The proof uses no numerical eigenvalue count and no floating-point shooting residual.  
Its foundational ingredients are interval analysis and interval Newton methods \cite{Moore1966}, the Krawczyk operator \cite{Krawczyk1969}, the Poincar\'e--Miranda degree theorem \cite{Miranda1940}, high-order Taylor models \cite{BerzMakino1998}, and arbitrary-precision ball arithmetic \cite{Johansson2017}; see also \cite{LessardReinhardt2014,vanDenBergSheombarsing2021} for modern validated ODE and connecting-orbit frameworks.

\begin{lemma}[Validated compactified tail box]
\label{lem:validated-compactified-tail-bounds}
Let $L=20$, $x=\zeta^{4/3}$, $x_0=L^{4/3}$ and $p=57/4$.
Subtract $U_4=\alpha\zeta+\sum_{n=1}^4c_n\zeta^{1-4n}$ and use the equivalent first normal form
\[
 \mathbf Y=P(I+S/x)Z,
 \qquad \mathbf Y=(V,V_x,V_{xx},V_{xxx})^T,
 \qquad V=U-U_4.
\]
Put
\[
 \kappa=\frac34\{2\mu(1+\alpha^2)^2\}^{1/3},
 \quad
 (\lambda_0,\lambda_{\rm s},\lambda_+,\lambda_-)
 =(0,-\kappa,\kappa e^{i\pi/3},\kappa e^{-i\pi/3}),
\]
let $r_j=(1,\lambda_j,\lambda_j^2,\lambda_j^3)^T$, and set
\[
 P=(r_0,r_{\rm s},r_+,r_-),
 \qquad
 \Lambda=\operatorname{diag}(\lambda_0,\lambda_{\rm s},\lambda_+,\lambda_-).
\]
The $x^{-1}$ matrix in this eigenbasis and its diagonal part are
\[
 \mathcal B_1=
 \begin{pmatrix}
 3/4&15/4&15/4&15/4\\
 -1/4&-5/4&-5/4&-5/4\\
 -1/4&-5/4&-5/4&-5/4\\
 -1/4&-5/4&-5/4&-5/4
 \end{pmatrix},
 \qquad
 D=\operatorname{diag}(3/4,-5/4,-5/4,-5/4).
\]
Define $S_{ii}=0$ and $S_{ij}=-(\mathcal B_1)_{ij}/(\lambda_i-\lambda_j)$ for $i\ne j$.
We work on $Z_- =\overline{Z_+}$ with
\[
 |Z|_{\rm c}^2=|Z_0|^2+|Z_{\rm s}|^2+2|Z_+|^2;
\]
for interval evaluation this is realified by
\[
 C=\begin{pmatrix}
 1&0&0&0\\0&1&0&0\\0&0&1&i\\0&0&1&-i
 \end{pmatrix},
 \qquad
 (Z_0,Z_{\rm s},\Re Z_+,\Im Z_+)^T\longmapsto Z.
\]
All norms below are the induced real conjugate-pair norms.
Uniformly for
\[
 1.0370\le\alpha\le1.0372,
 \qquad0.03028\le\mu\le0.03031,
 \qquad x\ge x_0,
\]
the transformed equation
\[
 Z_x=(\Lambda+D/x)Z+\mathcal R(x)Z+\mathcal Q(x,Z)+g(x)
\]
has the following precise meaning.  
If $f_Z[\overline U](x,Z)$ denotes the transformed vector field obtained after writing $U=\overline U+V$ and applying the displayed normal-form changes, then
\[
 g=f_Z[U_4](x,0),\qquad
 \mathcal R=D_Zf_Z[U_4](x,0)-(\Lambda+D/x),
\]
and
\[
 \mathcal Q(x,Z)=f_Z[U_4](x,Z)-f_Z[U_4](x,0)
                  -D_Zf_Z[U_4](x,0)Z.
\]
We write $\mathcal R_{\rm cone}=D_Zf_Z[\alpha\zeta](x,0)-(\Lambda+D/x)$.
With these definitions the transformed equation satisfies, in the real conjugate-pair norm,
\begin{align}
 \|\mathcal R(x)\|&\le100x^{-2},
 &|g(x)|&\le4\times10^{14}x^{-p-1},
 \label{eq:validated-tail-linear-forcing}\\
 |\mathcal Q(x,Z)-\mathcal Q(x,\widetilde Z)|
 &\le20x^{1/4}(|Z|+|\widetilde Z|)|Z-\widetilde Z|.
 \label{eq:validated-tail-nonlinear}
\end{align}
Put
\[
 \Phi_{\rm s}(x)=e^{-\kappa(x-x_0)}(x/x_0)^{-5/4},
 \qquad
 Z=C_L\Phi_{\rm s}e_{\rm s}+Z_{\rm r}.
\]
The stable Volterra component is normalised at the finite base by
\begin{equation}\label{eq:validated-stable-base}
 (Z_{\rm r})_{\rm s}(x_0)=0,
 \qquad Z_{\rm s}(x_0)=C_L.
\end{equation}
On $|C_L|\le10^{-11}$ and $\|Z_{\rm r}\|_p:=\sup_{x\ge x_0}x^p|Z_{\rm r}(x)|\le10^{14}$, the finite-base Volterra map has Lipschitz constant less than $0.142$ and maps the ball into a concentric ball of relative radius less than $0.524$.
Its unique fixed point depends $C^1$ on $(\alpha,\mu,C_L)$ and satisfies
\begin{equation}\label{eq:validated-tail-base-displacement}
 \max\{|Z_0(x_0)|,|Z_+(x_0)|,|Z_-(x_0)|\}
 \le10^{14}20^{-19}
 =1.9073486328125\times10^{-11}.
\end{equation}
The entire validation tube is positive; in fact $U(\zeta)/\zeta>1.0369$ for $\zeta\ge20$.
\end{lemma}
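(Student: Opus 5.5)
The plan is to make the Volterra construction of \cref{prop:finite-base-tail-chart} fully quantitative in the compactified variable $x=\zeta^{4/3}$. The work is split into a symbolic part and an interval-arithmetic part.

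\textbf{Symbolic part.} I first derive the transformed equation. Starting from \eqref{eq:similarity-profile}, I write $U=U_4+V$ and change variables $\zeta=x^{3/4}$. This turns the fourth-order equation for $V$ into a first-order system for $\mathbf Y=(V,V_x,V_{xx},V_{xxx})^T$. Its constant-coefficient limit has characteristic roots $0,-\kappa,\kappa e^{\pm i\pi/3}$, which are the roots $0,-1,\rho_\pm$ of \eqref{eq:exact-cone-normal-form} rescaled by $\kappa$.

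I then diagonalise by $P$. The $x^{-1}$ coefficient becomes $\mathcal B_1$. The near-identity transformation $I+S/x$, with $S$ solving the commutator equation $[\Lambda,S]=-(\mathcal B_1-D)$ off the diagonal, removes the off-diagonal part of $\mathcal B_1$ at order $x^{-1}$. It leaves $D/x$ and pushes the remainder into $O(x^{-2})$. This is exact rational algebra in $(\alpha,\mu,x^{-1})$, checked symbolically.

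The three quantities $\mathcal R$, $g$ and $\mathcal Q$ are then explicit rational expressions.
\begin{itemize}
\item The bound on $g$ comes from the recursion \eqref{eq:conical-tail-recursion}. With $N=4$ the residual $\mathcal F[U_4]+\mu(U_4-\zeta U_4')$ is $O(\zeta^{-19})$. After the $x$-scaling of derivatives and the factor from the transformation, this gives the exponent $p+1$.
\item The constants $100$, $4\times10^{14}$ and $20$ are certified by evaluating Taylor models in $w=x^{-1}\in[0,x_0^{-1}]$, with the parameters as intervals, over a covering of the parameter box. These are the $1024\times4\times4$ tail boxes.
\item For $\mathcal Q$, I bound the second $Z$-derivative on a tube $|Z|\le\rho$. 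The factor $x^{1/4}$ records the growth of $\zeta^{-1}$-type coefficients after undoing the scaling. I use the lower bound $U\ge1.0369\,\zeta$ to keep denominators away from zero.
\end{itemize}

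\textbf{Fixed-point part.} I write the Volterra map exactly as in \eqref{eq:finite-base-centre-volterra}--\eqref{eq:finite-base-unstable-volterra}, now using the fundamental solutions of $\Lambda+D/x$:
\begin{itemize}
\item the centre component $x^{3/4}$, integrated from $\infty$;
\item the stable component $\Phi_{\rm s}$, integrated from $x_0$ and normalised by \eqref{eq:validated-stable-base};
\item the unstable pair $e^{\lambda_\pm x}x^{-5/4}$, integrated from $\infty$.
\end{itemize}

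In the weight $x^p$ each kernel has explicit bounds.
\begin{itemize}
\item \textbf{Centre.} $\int_x^\infty (x/s)^{3/4}s^{-p-1}\,ds$ is bounded by $x^{-p}/(p-3/4)$, where the gain comes from the extra power of $x$.
\item \textbf{Stable and unstable.} These give factors of order $1/\kappa$, times $(1+O(p/(\kappa x_0)))$.
\end{itemize}

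From these I assemble the two validated numbers:
\begin{itemize}
\item the Lipschitz constant, from $100x_0^{-1}$ times the kernel bounds, plus the $\mathcal Q$ term with $2\cdot 20x^{1/4}\rho$ and $\rho=10^{14}x_0^{-p}+|C_L|$;
\item the self-map radius, from $4\times10^{14}$ times the kernel bounds, plus the $C_L\Phi_{\rm s}$ contribution fed through $\mathcal R$ and $\mathcal Q$.
\end{itemize}
Evaluating both in ball arithmetic should give $<0.142$ and $<0.524$.

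The contraction mapping theorem then gives the fixed point. $C^1$ dependence follows as in \cref{prop:finite-base-tail-chart}, by the uniform contraction principle after translating to $\tau=x-x_0$. The displacement bound \eqref{eq:validated-tail-base-displacement} is $\|Z_{\rm r}\|_p x_0^{-p}$ with $x_0^{-p}=20^{-19}$.

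Positivity follows by inverting $P(I+S/x)$. This bounds $V=U-U_4$ by a small multiple of $x^{-p}$, and $U_4(\zeta)/\zeta\ge1.0370-|\sum c_n\zeta^{-4n}|$ with $c_1\approx-3.8$ at $\zeta=20$. Hence $U/\zeta>1.0369$ for $\zeta\ge20$.

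\textbf{Main obstacle.} I expect the hardest step to be certifying the uniform bounds on $\mathcal R$ and especially $\mathcal Q$ over the unbounded range $x\ge x_0$. The nonlinearity is rational with denominators involving $U$ and $1+U'^2$, so its enclosures must stay sharp enough over the whole half-line to keep the Lipschitz constant below one. I would handle this by compactifying in $w=x^{-1}$, factoring out the exact powers of $x$ symbolically, and bounding the remaining polynomial-in-$w$ Taylor models with rigorous remainders on subdivided parameter boxes.
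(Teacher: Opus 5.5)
You have the paper's architecture: residual cancelled through $\zeta^{-19}$, interval boxes, the same centre, stable and unstable kernels in the $x^{p}$ weight, contraction, $|Z_{\rm r}(x_0)|\le10^{14}x_0^{-p}=10^{14}20^{-19}$, and positivity from $U_4/\zeta$ plus a tiny $V$. But your certification of \eqref{eq:validated-tail-nonlinear} fails as set up. You bound $D_Z^2f_Z$ on a fixed-radius tube $|Z|\le\rho$, with $\rho=10^{14}x_0^{-p}+|C_L|$, and then compactify.

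The nonlinearity sees $Z$ only through the $\zeta$-jet of $V$, and $\partial_\zeta=\tfrac43x^{1/4}\partial_x$. Hence $U'=U_4'+\tfrac43x^{1/4}V_x$, while $U''$ contains $\tfrac{16}{9}x^{1/2}V_{xx}$ and $U'''$ contains a multiple of $x^{3/4}V_{xxx}$. Already the drift term $2\mu(1+U'^2)^2(U-\zeta U')$ in $U''''$ contributes to $\partial_{V_x}^2V_{xxxx}$ a term proportional to $\mu x^{1/4}(12U'+20U'^3)$. On a fixed tube $U'$ can be of size $x^{1/4}\rho$, so $x^{-1/4}\|D_Z^2f_Z\|$ grows like $(x^{1/4}\rho)^3$ and is unbounded on $x\ge x_0$. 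The constant $20$ is false there, and no enclosure compactified down to the endpoint can hold. The fractional powers also make $w=x^{-1}$ the wrong compactifying variable; the paper uses $w=x^{-1/4}=\zeta^{-1/3}\in[0,0.3685]$. The same defect appears in your Lipschitz count. In the $x^p$ weight the factor $20x^{1/4}(|Z|+|\widetilde Z|)$ is multiplied by $x^{p+1}\cdot x^{-p}$, and a non-decaying bound $|Z|\le\rho$ cannot absorb the resulting $x^{5/4}$.

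The missing idea is to work only on the decaying Volterra envelope, which is all the fixed point ever visits. On the ball $|Z_{\rm r}(x)|\le 10^{14}x^{-p}$, and $|C_L\Phi_{\rm s}(x)|\le|C_L|(x_0/x)^p$ because $\kappa x_0\ge p-5/4$. Together these give $|Z(x)|\le\rho(x_0/x)^p$. The paper writes $Z=w^{57}E$ with $E$ bounded, and likewise $w^2U''=w^{17}b_2$ and $w^3U'''=w^{21}b_3$. After these substitutions the weighted Hessian entries are rational functions regular at $w=0$, and interval evaluation gives a bound below $8.64$. The nonlinear Lipschitz contribution then becomes $\Gamma_L\cdot2\cdot20\,x_0^{5/4}\rho$.

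You also need the kernel factor as sharp as $\Gamma_L=\max\{1/(p+3/4),\,2/(\kappa x_0)\}$. Here $1/(p+3/4)$ corrects your $1/(p-3/4)$, the factor $2/(\kappa x_0)$ reflects that the unstable real part is $\kappa/2$, and the stable kernel needs $\kappa x_0>2(p-5/4)$. Your ``order $1/\kappa$ times $1+O(p/(\kappa x_0))$'' is too loose, since $p/(\kappa x_0)\approx0.55$ and the final bound $0.1416<0.142$ is tight. Your self-map count also omits the linear remainder acting on $Z_{\rm r}$ itself, $\Gamma_L\cdot100/x_0\approx0.14$, which is a large share of $0.524$.

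By contrast, evaluating $\mathcal R$ directly, rather than as $\mathcal R_{\rm cone}$ plus a fundamental-theorem-of-calculus correction along the cone-to-$U_4$ segment, is a legitimate alternative.
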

\begin{proof}
The algebraic recursion cancels the compactified residual through $s^{18}$, $s=\zeta^{-1}$; symbolic rational arithmetic verifies that its first possible power is $s^{19}$.  
Outward-rounded evaluation on $s\in[0,0.05]$ gives
\[
 \sup_{x\ge x_0}x^{p+1}|g(x)|
 <3.101931095\times10^{14}<4\times10^{14}.
\]
For the remaining bounds put $w=\zeta^{-1/3}=x^{-1/4}$.  
A $256$-bit Arb calculation on $1024\times4\times4$ boxes covers the rational enlargement
\[
 0\le w\le0.3685,
 \quad1.0370\le\alpha\le1.0372,
 \quad0.03028\le\mu\le0.03031.
\]
The same boxes validate the interval inverse of $P(I+Sw^4)C$ throughout; equivalently $I+S/x$ is invertible on the whole parameter-tail box.
It proves
\[
 \sup x^2\|\mathcal R_{\rm cone}\|
 <27.406732
\]
and evaluates the weighted Hessian separately on the cone-to-$U_4$ segment.  
On that segment
\[
 \sup x^{-1/4}\|D_Z^2f_Z\|_F<8.633983,
\]
while the rescaled base displacement is less than $1.92354$.
The fundamental theorem of calculus therefore gives a correction less than $16.596376$, and hence
\[
 \sup x^2\|\mathcal R\|<44.003107<100.
\]
For the Hessian write $u=w^3U$, $h_2=w^2U''$ and $h_3=w^3U'''$.
On the full Volterra envelope write $Z=w^{57}E$, thereby defining the bounded rescaled vector $E$.  
One has $h_2=w^{17}b_2$ and $h_3=w^{21}b_3$, which defines the bounded tube variables $b_2,b_3$.  
After these substitutions all ten entries
\[
 w^{1+e_a+e_b}\partial_a\partial_b(w^4U''''),
 \qquad e=(3,-1,0,0),
\]
with $a,b$ ranging over the ordered variables $(u,q,h_2,h_3)$, are rational functions regular at $w=0$.  
Direct interval composition with $P(I+Sw^4)C$ and its inverse gives
\[
 \sup x^{-1/4}\|D_Z^2f_Z[U_4]\|_F
 <8.632301<20,
\]
which implies \eqref{eq:validated-tail-nonlinear} by integral Taylor remainder.

For completeness, put
\[
 \Gamma_L=\max\left\{\frac1{p+3/4},\frac2{\kappa x_0}\right\},
 \qquad
 \rho=|C_L|+Rx_0^{-p},
 \qquad R=10^{14}.
\]
The interval parameter box gives $\Gamma_L<0.08$ and $\kappa x_0>26.01517>2(p-5/4)=26$.  
Thus the stable free orbit is bounded by the same $x^{-p}$ envelope and the forward stable kernel has the factor $2/(\kappa x_0)$ used below.  
Let $\mathcal G$ denote the block solution operator formed from the centre, stable and two unstable Volterra kernels.  
They satisfy
\[
 \|\mathcal GH\|_p
 \le\Gamma_L\sup_{x\ge x_0}x^{p+1}|H(x)|.
\]
With $K_2=100$, $K_g=4\times10^{14}$ and $K_N=20$, the Lipschitz and ball ratios are bounded respectively by
\begin{align*}
 L_{\rm tail}
 &\le\Gamma_L\left\{\frac{K_2}{x_0}
       +2K_Nx_0^{5/4}\rho\right\},\\
 B_{\rm tail}
 &\le\frac{\Gamma_L}{R}\left\{
 K_g+K_2|C_L|x_0^{p-1}+K_Nx_0^{p+5/4}\rho^2\right\}
 +\frac{\Gamma_LK_2}{x_0}.
\end{align*}
Substitution of these values and $|C_L|\le10^{-11}$ gives upper bounds $0.142$ and $0.524$.
The same interval boxes give the stated positive lower bound.  
Uniform parameter-dependent contraction gives the asserted $C^1$ endpoint map.
Since $t=\kappa x$, the algebraic rescaling and the two normal-form changes are invertible on the validation box.  
Thus this is the same finite-base tail chart as in \cref{prop:finite-base-tail-chart}, expressed in the cancellation-free $P_4$ coordinates used by the interval calculation.
\end{proof}

The tail endpoint is now enclosed uniformly.
It remains to propagate the centre shooting family to the same base and verify the signs required for matching.

\begin{lemma}[Validated centre cube and signed faces]
\label{lem:validated-centre-miranda-cube}
Put
\[
 p_0=(0.63169185949612983,
      0.030292801498271463,
      1.037079401503446)
\]
and let $F=(Z_0,\Re Z_+,\Im Z_+)$ be the centre solution in the $P_4$ finite-base coordinates at $L=20$.  
Use the following fixed decimal rational preconditioner:
\[
\resizebox{\textwidth}{!}{$
 B=\begin{pmatrix}
 -1.9874531069408703921803188384096016152\,10^{-8}&
 -3.89919252084496093216178406302010914955\,10^{-5}&
  2.692876593501873590740705252497522410696\,10^{-4}\\
  9.93565749626417479846462113618818936\,10^{-11}&
  2.812547048420073565711689937996722092401\,10^{-5}&
 -1.724902102072632493014507961965114922251\,10^{-5}\\
 -5.0000228133190469535136000176975175576826\,10^{-2}&
 -6.146379043385490350935388358263219194\,10^{-4}&
  1.2908993723726045408121737752290791486\,10^{-3}
 \end{pmatrix}.$}
\]
For $p=(\gamma,\mu,\alpha)$ put
\[
 C_L(p)=Z_{{\rm s},{\rm c}}(20;p)
\]
and let $F_\infty(p)$ be the three nonstable coordinates $(Z_0,\Re Z_+,\Im Z_+)$ of the tail endpoint with parameters $(\alpha,\mu,C_L(p))$.  
The nonstable mismatch is
\begin{equation}\label{eq:validated-exact-mismatch}
 M(p)=F(p)-F_\infty(p).
\end{equation}
For $r=3\times10^{-11}$ put
\[
 p(\xi)=p_0+rB\xi,
 \qquad \mathcal B=p([-1,1]^3).
\]
the physical half-widths in $(\gamma,\mu,\alpha)$ are
\[
 (9.248983772691192\times10^{-15},
  1.361237725845061\times10^{-15},
  1.557172962297049\times10^{-12}),
\]
and the validated centre enclosures satisfy
\begin{align}
 \|F(p_0)\|_\infty&<4.913763\times10^{-12},
 \label{eq:validated-centre-residual}\\
 \inf_{p\in\mathcal B,\,0\le\zeta\le20}U_{\rm c}(\zeta;p)
 &>0.9996,\\
 \sup_{p\in\mathcal B}|Z_{{\rm s},{\rm c}}(20;p)|
 &<1.774939\times10^{-12}.
 \label{eq:validated-centre-positive-stable}
\end{align}
On every face $\xi_i=\sigma$, $\sigma\in\{-1,1\}$, direct interval evaluation of the mismatch satisfies
\begin{equation}\label{eq:validated-miranda-face-sign}
 \sigma M_i(p(\xi))>6.01275\times10^{-12}.
\end{equation}
\end{lemma}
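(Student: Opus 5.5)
The lemma is a finite list of validated interval enclosures, so the plan is to describe the computer-assisted argument that produces each inequality. The argument has three parts: a rigorous integration of the centre flux system from the axis to $\zeta=20$, a rigorous evaluation of the tail endpoint through the chart of \cref{lem:validated-compactified-tail-bounds}, and an interval evaluation of the mismatch $M$ on the faces of the preconditioned parallelepiped $\mathcal B$.

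\emph{Centre integration.} I would integrate \eqref{eq:pincher-flux-system-a}--\eqref{eq:pincher-flux-system-b} with data \eqref{eq:pincher-centre-shooting-data}, treating the parameters $p=(\gamma,\mu,\alpha)$ symbolically as Taylor-model variables over $\mathcal B$ in the normalised coordinate $\xi\in[-1,1]^3$.

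The axis $\zeta=0$ needs care. The right-hand sides are regular there because $U(0)=1>0$, but evenness forces $q$ and $J$ to be odd. I would therefore start the integration with a short Taylor series on $[0,\zeta_1]$ whose remainder is bounded by a Cauchy-majorant estimate.

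I would then cover $[0,20]$ by the $400$ order-$16$ slabs. On each slab I would compute a Taylor polynomial in $\zeta$ with coefficients that are Taylor models in $\xi$, and enclose the remainder by a Picard/Krawczyk a priori enclosure: find a box $W$ with $Y_0+[0,h]\,f(W)\subset\operatorname{int}W$, then bound the order-$17$ derivative on $W$. Within each slab I would also check $U>0.9996$ on the enclosure, which gives the positivity bound and guarantees the vector field stays regular.

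At $\zeta=20$ I would transform the enclosure to the $P_4$ coordinates. This means subtracting $U_4$ and its derivatives, converting $\zeta$-derivatives to $x=\zeta^{4/3}$ derivatives, and applying the inverse of $P(I+S/x_0)C$, which \cref{lem:validated-compactified-tail-bounds} already certified to be invertible. This produces $F(p)$ and $Z_{{\rm s},{\rm c}}(20;p)$. The residual bound \eqref{eq:validated-centre-residual} is the same computation carried out at the point $p_0$ in ball arithmetic, and the stable bound \eqref{eq:validated-centre-positive-stable} comes from the range of the $Z_{\rm s}$ Taylor model. That stable bound is below $10^{-11}$, so $C_L(p)$ lies in the admissible range of the tail chart.

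\emph{Tail endpoint.} By \cref{lem:validated-compactified-tail-bounds}, the nonstable coordinates of the tail endpoint are $F_\infty=(Z_{\rm r})_{\rm nonstable}(x_0)$, and they are bounded by \eqref{eq:validated-tail-base-displacement} uniformly over the parameter box. This bound holds because $\mathcal B$ sits inside $1.0370\le\alpha\le1.0372$, $0.03028\le\mu\le0.03031$, which I would confirm from the stated half-widths. I would treat $F_\infty$ as an unknown perturbation of size at most $1.9073\times10^{-11}$ in each component; no finer description of it is needed.

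\emph{Face signs.} On each face $\xi_i=\sigma$ I would evaluate $\sigma(F\circ p)_i$ by Taylor models. The preconditioner $B$ approximates the inverse Jacobian of $F$ at $p_0$, scaled so that $F(p(\xi))\approx r'\xi$ with $r'$ of order $2.5\times10^{-11}$. The remaining two coordinates are subdivided until the lower bound exceeds $6.01275\times10^{-12}+1.9074\times10^{-11}$; subtracting the tail enclosure then yields \eqref{eq:validated-miranda-face-sign}.

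The main obstacle is the margin. The face signal is only of order $10^{-11}$ and has to dominate both the tail uncertainty and the accumulated wrapping error from $400$ slabs. So the hardest step is controlling wrapping in the centre integration: maintaining high-order parameter Taylor models, using QR-type (Lohner) preconditioning of the remainder at each slab, and working at $256$-bit precision. With those in place, the remainder growth stays well below $10^{-12}$ even though the linearisation has growing WKB modes over the interval $[0,20]$.
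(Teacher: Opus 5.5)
Your plan follows the paper's proof: the same $400$ order-$16$ slabs with Taylor models in $\xi$, positivity checked on the slab tubes, conversion to the $P_4$ coordinates at $\zeta=20$, and face signs obtained by subtracting the symmetric tail box of radius $10^{14}20^{-19}$. The differences are only in implementation: the paper encloses each slab by a defect bound and the first-exit inequality $E_i>E_i^0+h(R_i+\sum_jA_{ij}E_j)$ rather than a Picard a priori box, integrates directly from the axis (which is a regular point, so no special start is needed), and ranges the face monomials rather than subdividing.

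One small caution: subtract the exact tail radius $1.9073486328125\times10^{-11}$, not the rounded-up $1.9074\times10^{-11}$. Since $3\times10^{-11}-4.913763\times10^{-12}-1.9073486328125\times10^{-11}=6.012750671875\times10^{-12}$, the stated face margin has essentially no slack, and your rounded threshold may be unattainable.
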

\begin{proof}
A $256$-bit Arb calculation retains $\xi_1,\xi_2,\xi_3$ as global degree-three Taylor-model variables \cite{BerzMakino1998} and divides $[0,20]$ into $400$ equal slabs.  
On each slab it constructs a degree-$16$ time polynomial $P$ for the flux system.  
The defect enclosure contains both the finite coefficients of $P'-f(P)$ and the omitted time tail.  
For the latter, the code shifts $P$ to an arbitrary $\tau$ in the slab, encloses the normalised seventeenth coefficient of $f(P(\tau+s))$, and multiplies it by $h^{17}$, $h=1/20$, as required by Taylor's theorem.  
The constant and three linear parameter noises are retained in this shifted calculation; all higher parameter monomials and the Taylor-model remainder are placed in a symmetric outward-rounded affine error.

Let $E^0$ be the incoming error, $R$ the resulting residual bound and $A$ a componentwise enclosure of $|D_Yf|$ on the proposed slab tube.  
Before the endpoint is accepted the program verifies, component by component, the strict inclusion
\[
 E_i>E_i^0+h\left(R_i+\sum_jA_{ij}E_j\right).
\]
A first-exit argument therefore encloses the exact ODE solution on the whole slab; no unweighted contraction surrogate is used.  
The lower bounds for $U$ are checked on these same tubes.  
Propagating the midpoint-selection radius at $t=0$ and the endpoint rounding radius gives \eqref{eq:validated-centre-residual}--\eqref{eq:validated-centre-positive-stable}.

Finally the code evaluates the exact flux-to-jet and $P(I+S/x)$ coordinate maps in Taylor-model arithmetic.  
On each of the six faces it ranges every remaining monomial, includes the global remainder, and subtracts the symmetric tail endpoint box of radius $10^{14}20^{-19}=1.9073486328125\times10^{-11}$ from \eqref{eq:validated-tail-base-displacement}.  
The six resulting signed lower endpoints are strictly positive, which is \eqref{eq:validated-miranda-face-sign}.
\end{proof}

The two validated enclosures now meet the hypotheses of the Poincar\'e--Miranda theorem, so the positive profile follows.

\begin{theorem}[Certified positive conical profile]
\label{thm:certified-positive-conical-profile}
There is a smooth positive even solution of \eqref{eq:similarity-profile} with $U(0)=1$ and conical slope $\alpha>0$.
Writing $\gamma=U''(0)$, its parameters lie in the validated enclosure
\begin{align*}
 |\gamma-0.63169185949612983|
 &\le9.248983772691192\times10^{-15},\\
 |\mu-0.030292801498271463|
 &\le1.361237725845061\times10^{-15},\\
 |\alpha-1.037079401503446|
 &\le1.557172962297049\times10^{-12}.
\end{align*}
In particular $U$ has the differentiated conical expansion in \cref{prop:finite-base-tail-chart} and is bounded away from zero.
\end{theorem}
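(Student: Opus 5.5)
The plan is to combine \cref{lem:validated-compactified-tail-bounds} and \cref{lem:validated-centre-miranda-cube} through the Poincar\'e--Miranda theorem, and then to invoke \cref{prop:pincher-profile-certificate}. For $\xi\in[-1,1]^3$ we set $p(\xi)=p_0+rB\xi$ and $\Psi(\xi)=M(p(\xi))$. The first step is continuity of $\Psi$. The centre map $p\mapsto Y_{\rm c}(20;\gamma,\mu)$ is smooth by smooth dependence of ODE solutions, and the tubes of \cref{lem:validated-centre-miranda-cube} keep $U_{\rm c}>0.9996$, so the flux vector field stays regular on them. Hence $F(p)$ and $C_L(p)=Z_{{\rm s},{\rm c}}(20;p)$ are continuous, with the $P(I+S/x)$ change of coordinates invertible on the validation box by \cref{lem:validated-compactified-tail-bounds}.

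Next we check that the tail chart can be evaluated along the whole cube. The bound \eqref{eq:validated-centre-positive-stable} gives $|C_L(p)|<1.8\times10^{-12}\le10^{-11}$. The physical half-widths also place $\mathcal B$ inside the parameter box $1.0370\le\alpha\le1.0372$, $0.03028\le\mu\le0.03031$. Consequently the finite-base Volterra fixed point exists and is unique, and it depends $C^1$ on $(\alpha,\mu,C_L)$. Therefore $F_\infty(p)$, and with it $\Psi$, is continuous on $[-1,1]^3$.

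The face inequality \eqref{eq:validated-miranda-face-sign} states $\sigma\Psi_i>0$ on $\{\xi_i=\sigma\}$. Poincar\'e--Miranda then gives $\xi^*$ with $\Psi(\xi^*)=0$. We set $p^*=(\gamma,\mu,\alpha)=p(\xi^*)$. At $p^*$ the three nonstable coordinates agree, and the stable coordinate agrees by the normalisation \eqref{eq:validated-stable-base}, since $C_L$ is chosen equal to the centre's stable coordinate. So all four $Z$-coordinates match at $x_0$. Because $V\mapsto Z$ is an invertible affine map of the jet $(V,V_x,V_{xx},V_{xxx})$ at $\zeta=20$, and $U_4$ is a fixed function for given $(\alpha,\mu)$, the 3-jets of $U$ coincide. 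The jet determines $(U,q,K,J)$, because $K$ involves $U''$ and $J$ involves $U'''$ with $U>0$. Hence $G=0$ in \eqref{eq:pincher-matching-map}.

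Positivity holds on both pieces: $U_{\rm c}>0.9996$ on $[0,20]$ and $U/\zeta>1.0369$ on $[20,\infty)$. \cref{prop:pincher-profile-certificate} then yields a smooth positive even solution with $U(0)=1$, slope $\alpha>0$ and $U$ bounded below by a positive constant. From \eqref{eq:pincher-centre-shooting-data} we get $K(0)=1-\gamma$. Since $K=1/(US)-q'/S^3$ with $U=1$, $q=0$ at the origin, this gives $U''(0)=\gamma$. The enclosures follow because $|B\xi|$ componentwise times $r$ gives exactly the stated half-widths. The differentiated conical expansion comes from the tail chart (\cref{prop:finite-base-tail-chart}, which \cref{lem:validated-compactified-tail-bounds} identifies with the $P_4$ coordinates).

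The main obstacle is the bookkeeping that the stable coordinate is matched by construction rather than by degree. One must check that the tail endpoint used in the face evaluations really is the same continuous function $F_\infty(p)$, with the symmetric error radius $1.9\times10^{-11}$ covering it. That is precisely what the lemma asserts.
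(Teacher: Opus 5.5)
Your proposal is correct and follows essentially the same route as the paper. You apply Poincar\'e--Miranda to the three nonstable mismatch coordinates with $C_L$ slaved to the centre's stable coordinate, then use invertibility of the finite-base coordinate changes, the jet-to-flux map on $U>0$, and patching via \cref{prop:pincher-profile-certificate}; the extra checks you supply ($\mathcal B$ lying in the tail parameter box, $U''(0)=\gamma$ from $K(0)=1-\gamma$, and the half-widths $r\sum_j|B_{ij}|$) are all correct and only make explicit what the paper leaves implicit.
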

\begin{proof}
We verify the zero required by \cref{prop:pincher-profile-certificate} in the validated affine box.
For each $p\in\mathcal B$, set the stable tail coordinate equal to $C_L=Z_{{\rm s},{\rm c}}(20;p)$.  
The last bound in \cref{lem:validated-centre-miranda-cube} places it in the tail chart of \cref{lem:validated-compactified-tail-bounds}.  
The latter supplies a precise continuous tail endpoint and the displacement bound \eqref{eq:validated-tail-base-displacement}.  
The signed-face conclusion of \cref{lem:validated-centre-miranda-cube} and the Poincar\'e--Miranda theorem therefore give a zero of the three nonstable matching conditions.  
The stable condition holds by the definition of $C_L$.  
Invertibility of the finite-base coordinate changes gives equality of the four profile jets.
On $U>0$ the map from $(U,U',U'',U''')$ to $(U,q,K,J)$ in \eqref{eq:pincher-flux-system-a}--\eqref{eq:pincher-flux-system-b} is analytic and invertible, so all four flux variables agree as well.
Uniqueness of the first-order system patches the centre and tail solutions; their validated lower bounds give positivity, and the centre conditions give the smooth even extension.
\end{proof}

\section{Weighted terminal dynamics and mode selection}\label{sec:linear-new}

The formal similarity linearisation, its geometric translation and dilation modes, and its far-field WKB branches were identified for the fundamental profile in \cite{BernoffBertozziWitelski1998}.  
We put that analysis in the present dynamic-neck normalisation and prove the weighted Fredholm and essential-spectrum statements needed for compactification.  
The broader methodological lineage includes weighted spectral norms for convective stability \cite{Sattinger1977,Wayne1997}, exponential-dichotomy and weighted Fredholm theory \cite{Levinson1948,LockhartMcOwen1985,Palmer1988}, and Lyapunov--Perron invariant-manifold constructions \cite{BatesJones1989,Henry1981,Wayne1997}.  
The estimates specific to the surface-diffusion operator are proved below.

Fix the profile $U$ from \cref{thm:certified-positive-conical-profile}.  
In the following $S$, $K$ and $M$ are the coefficient functions associated with this fixed profile.

Set
\[
 S=(1+U'^2)^{1/2},
 \qquad
 K=\frac1{US}-\frac{U''}{S^3},
 \qquad
 M=\frac U S.
\]
For a perturbation $v$, put
\begin{align}
 k_U[v]
 &=-\frac{v}{U^2S}-\frac{U'v'}{US^3}
   -\frac{v''}{S^3}+\frac{3U''U'v'}{S^5},
 \label{eq:pincher-k-linearisation}\\
 m_U[v]
 &=\frac vS-\frac{UU'v'}{S^3}.
 \label{eq:pincher-m-linearisation}
\end{align}
Direct differentiation of \eqref{eq:terminal-operator} gives
\begin{equation}\label{eq:F-linearisation}
 \mathcal A_Uv
 :=D\mathcal F[U]v
 =-\frac{v}{2U^2}(MK')'
 +\frac1{2U}\bigl(m_U[v]K'+M(k_U[v])'\bigr)'.
\end{equation}
Its principal part is
\begin{equation}\label{eq:pincher-linear-principal-part}
 -\frac1{2(1+U'^2)^2}v^{(4)}.
\end{equation}
On an unrestricted smooth weighted core define
\begin{equation}
 \mathcal L_0v
 =\mathcal A_Uv+\mu(v-\zeta v').
 \label{eq:fixed-pincher-linearisation}
\end{equation}
This operator preserves parity.
On its even trace slice $v(0)=0$, define
\begin{equation}
 \mathcal Lv
 =\mathcal A_Uv-(\mathcal A_Uv)(0)(U-\zeta U')
   +\mu(v-\zeta v')
 \label{eq:normalised-pincher-linearisation}
\end{equation}
If
\[
 DU=U-\zeta U',
 \qquad
 \Pi w=w-w(0)DU,
\]
then $DU(0)=1$, $\Pi^2=\Pi$, and
\begin{equation}\label{eq:projected-pincher-operator}
 \mathcal L=\Pi\mathcal L_0
 \quad\hbox{on }\{v:v(0)=0\}.
\end{equation}
The two geometric modes are:
\begin{equation}\label{eq:pincher-symmetry-modes}
 \mathcal L_0U'=\mu U',
 \qquad
 \mathcal L_0(DU)=4\mu DU.
\end{equation}
These are the translation and dilation modes found in the similarity analysis of \cite[Section~5.1.1]{BernoffBertozziWitelski1998}.  
Reflection symmetry and the dynamic neck normalisation determine how they enter the present phase space.
Reflection symmetry removes the odd translation mode, while dynamic neck normalisation removes $DU$.  
Note that the latter operation does not rule out a second eigenvector or a Jordan chain at $4\mu$.

\begin{lemma}[The normalising projection]\label{lem:pincher-projection}
If $\mathcal L_0\phi=\lambda\phi$ on the smooth core, then
\begin{equation}\label{eq:projected-eigenfunction}
 \mathcal L(\Pi\phi)=\lambda\Pi\phi
\end{equation}
for every $\lambda\in\mathbb C$.  
At $\lambda=4\mu$, the projected vector vanishes only when $\phi$ is proportional to $DU$.
\end{lemma}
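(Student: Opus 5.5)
The identity follows from two facts already recorded: the factorisation \eqref{eq:projected-pincher-operator}, $\mathcal L=\Pi\mathcal L_0$ on the trace slice $\{v:v(0)=0\}$, and the dilation mode $\mathcal L_0(DU)=4\mu DU$ from \eqref{eq:pincher-symmetry-modes}. The plan is to apply $\Pi$ to the eigenvalue equation and check that the correction term in $\Pi$ is itself a $4\mu$-eigenvector that $\Pi$ annihilates.

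First I check that $\Pi\phi$ lies in the domain of $\mathcal L$. By definition $\Pi\phi=\phi-\phi(0)DU$, and $DU(0)=U(0)=1$, so $(\Pi\phi)(0)=0$. Thus $\Pi\phi$ lies on the trace slice. I also need $DU$ to belong to the smooth weighted core, so that $\mathcal L_0(DU)$ is meaningful in the same sense as $\mathcal L_0\phi$. This comes from the differentiated conical expansion in \cref{thm:certified-positive-conical-profile}: the linear term cancels in $U-\zeta U'$, leaving $DU=4b\zeta^{-3}+O(\zeta^{-7})$ with the corresponding derivative bounds. Evenness of $U$ makes $DU$ even, so parity is respected.

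Next comes the main computation. By linearity of $\mathcal L_0$ and the dilation mode,
\[
 \mathcal L_0(\Pi\phi)=\mathcal L_0\phi-\phi(0)\mathcal L_0(DU)=\lambda\phi-4\mu\phi(0)DU .
\]
Applying $\Pi$ and using \eqref{eq:projected-pincher-operator} gives
\[
 \mathcal L(\Pi\phi)=\lambda\Pi\phi-4\mu\phi(0)\,\Pi(DU).
\]
Since $\Pi(DU)=DU-DU(0)DU=0$, this yields \eqref{eq:projected-eigenfunction}, and it does so for every $\lambda\in\mathbb C$. The same computation shows $\Pi^2=\Pi$.

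For the last assertion, note that $\Pi\phi=0$ exactly when $\phi=\phi(0)DU$, that is, when $\phi$ is proportional to $DU$. Conversely, any multiple of $DU$ is a $4\mu$-eigenvector that is killed by $\Pi$. So at $\lambda=4\mu$ the projected vector vanishes only when $\phi$ is proportional to $DU$. (This argument in fact works for every $\lambda$; the statement singles out $4\mu$ because only there can $\phi$ actually be a multiple of $DU$.)

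I do not expect a genuine obstacle. The only point needing care is that $DU$ belongs to the same weighted core as $\phi$, and the tail expansion settles that. The argument says nothing about a possible second eigenvector or Jordan chain at $4\mu$, in line with the remark preceding the lemma.
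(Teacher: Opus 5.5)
Your proof is correct and is essentially the paper's argument: apply $\Pi\mathcal L_0$ to $\phi-\phi(0)DU$, use $\mathcal L_0DU=4\mu DU$ and $\Pi DU=0$, and read off the vanishing statement from the definition of $\Pi$. The paper leaves implicit two points you check explicitly: that $(\Pi\phi)(0)=0$, so the factorisation $\mathcal L=\Pi\mathcal L_0$ on the trace slice applies, and that $DU$ lies in the weighted core.
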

\begin{proof}
Since $\Pi DU=0$ and $\mathcal L_0DU=4\mu DU$,
\[
 \Pi\mathcal L_0\{\phi-\phi(0)DU\}
 =\Pi\{\lambda\phi-4\mu\phi(0)DU\}
 =\lambda\Pi\phi.
\]
This is \eqref{eq:projected-eigenfunction}.  
The final statement follows from the definition of $\Pi$.
\end{proof}

\subsection{The conical end and the correct decay topology}

Suppose that
\[
 U(\zeta)=\alpha|\zeta|+b|\zeta|^{-3}
           +O(|\zeta|^{-7}),
 \qquad
 q_\alpha=1+\alpha^2,
\]
with differentiated asymptotics.  
Then
\begin{align}
 \mathcal L_0v
 ={}&-\frac1{2q_\alpha^2}v^{(4)}+\mu(v-\zeta v')
 +O\bigl(|\zeta|^{-1}v'''+|\zeta|^{-2}v''
          +|\zeta|^{-3}v'+|\zeta|^{-4}v\bigr).
 \label{eq:pincher-far-field-operator}
\end{align}
The formal eigenvalue equation has the algebraic branch
\begin{equation}\label{eq:pincher-algebraic-mode}
 v(\zeta)\sim|\zeta|^{1-\lambda/\mu}
\end{equation}
and three WKB factors
\begin{equation}\label{eq:pincher-WKB-modes}
 \exp\bigl(\varrho_j\kappa|\zeta|^{4/3}\bigr),
 \qquad
 \varrho_j^3=-1,
 \qquad
 \kappa=\frac34(2\mu q_\alpha^2)^{1/3}.
\end{equation}
One WKB branch decays and two grow at the conical end (c.f. \cite[Sections~4.1 and~5.1.2]{BernoffBertozziWitelski1998}).

The global relative topology cannot decay exponentially.  
Indeed, for the constant far operator
\[
 B=-a_\alpha\partial_\zeta^4+\mu(1-\zeta\partial_\zeta),
 \qquad
 a_\alpha=\frac1{2q_\alpha^2},
\]
one has the semigroup formula
\begin{equation}\label{eq:far-semigroup}
 (e^{sB}f)(\zeta)
 =e^{\mu s}
 \left(e^{-a_\alpha\tau(s)\partial_\zeta^4}f\right)
       (e^{-\mu s}\zeta),
 \qquad
 \tau(s)=\frac{1-e^{-4\mu s}}{4\mu}.
\end{equation}
Thus a localised perturbation is carried to $|\zeta|\asymp e^{\mu s}$.  
The unprojected formula also exposes a point which is essential for the terminal construction: for generic $f$,
\[
 (e^{sB}f)(0)
 =e^{\mu s}
  (e^{-a_\alpha\tau(s)\partial_\zeta^4}f)(0),
\]
so a constant-like core component grows.  
It must be cancelled by the future Duhamel condition on the complete controlled spectral space; neck normalisation removes the dilation vector but not an additional eigenvector or Jordan chain.  
After that cancellation, an outgoing front may remain uniformly small in the global relative and slope components without decaying there.  
Exponential decay is imposed only in a core-discounted norm and, after all controlled modes have been removed, in every fixed local $C^j$ norm.

For the core put
\begin{equation}\label{eq:pincher-polynomial-space}
 H^k_m
 =H^k\bigl(\R,\langle\zeta\rangle^{-2m}\dd\zeta\bigr),
 \qquad
 m>\frac32.
\end{equation}
Polynomially weighted norms used to move convective essential spectrum have a classical role in stability theory \cite{Sattinger1977,Wayne1997}.
The threshold is precisely the condition that cone-sized perturbations $v=O(|\zeta|)$ belong to the underlying weighted $L^2$ space.  
On the positive end, set $y=\log\zeta$ and
\[
 w(y)=e^{(1/2-m)y}v(e^y).
\]
This identifies the weighted $L^2$ norm asymptotically with the unweighted $L^2(\dd y)$ norm: the omitted multiplier is $(1+e^{-2y})^{-m/2}=1+O(e^{-2y})$.  
Conjugation by the displayed leading multiplier gives
\begin{align}
 1-\zeta\partial_\zeta
 &\longmapsto \frac32-m-\partial_y,
 \label{eq:pincher-log-transport}\\
 \partial_\zeta^4
 &\longmapsto
 e^{-4y}\prod_{j=0}^3
 \left(\partial_y+m-\frac12-j\right).
 \label{eq:pincher-log-fourth}
\end{align}
The unitary conjugation adds coefficients of order $e^{-2y}$ and does not change the limiting operator.  
Every remainder in \eqref{eq:pincher-far-field-operator} has $e^{-4y}$ differential scale.  
The limiting end operator is therefore the outwar transport operator
\[
 \mu\left(\frac32-m-\partial_y\right),
\]
whose spectral right boundary is
\begin{equation}\label{eq:pincher-essential-bound}
 s_{\rm ess}=-\mu\left(m-\frac32\right)<0.
\end{equation}

For $k>1/2$ define the normalised even phase Hilbert space
\begin{equation}\label{eq:pincher-phase-space}
 \mathcal X^k_m
 =\{v\in H^k_m:v\ \hbox{even},\ v(0)=0\}
\end{equation}
with the inherited norm.  
One convenient dense graph domain in this space is
\begin{equation}\label{eq:pincher-graph-domain}
 \mathcal D_{m,k}
 =\left\{
 v\in H^{k+4}_m:
 \zeta v'\in H^k_m,\;
 v\ \hbox{even},\;
 v(0)=0
 \right\}.
\end{equation}
Here $\mathcal L_0v$ has a trace at zero and $DU=4b|\zeta|^{-3}+O(|\zeta|^{-7})$ belongs to the ambient weighted space; although $DU(0)=1$, the projection $\Pi$ maps into \eqref{eq:pincher-phase-space}.
The calculation \eqref{eq:pincher-log-transport}--\eqref{eq:pincher-essential-bound} does not by itself prove a resolvent theorem.  
We additionally need an end-parametrix estimate followed by a compact-core, high-frequency resolvent estimate.  
We may not simply call the variable-coefficient part relatively compact: the decaying perturbation of the principal coefficient still multiplies $v^{(4)}$ and is not compact from the graph domain to the phase space.
This construction is in the tradition of weighted Fredholm theory on noncompact domains and exponential-dichotomy index formulae \cite{LockhartMcOwen1985,Palmer1988}; the line $-\mu/2$ is specific to the dilation transport here.

\begin{proposition}[Terminal Fredholm theorem and finite-mode splitting]
\label{prop:finite-core-splitting}
Let $m=2$ and $k\ge2$.  
The closure of $\mathcal L$ on $\mathcal X^k_2$ with domain \eqref{eq:pincher-graph-domain} generates a $C_0$-semigroup.  
The pencil
\[
 \lambda-\mathcal L:\mathcal D_{2,k}\longrightarrow\mathcal X^k_2
\]
is Fredholm of index zero for $\Re\lambda>-\mu/2$, is invertible for all sufficiently large positive real $\lambda$, and obeys the high-frequency estimate
\begin{equation}\label{eq:pincher-high-frequency-resolvent}
 \|v\|_{\mathcal X^k_2}
 \le C_\delta\|(\lambda-\mathcal L)v\|_{\mathcal X^k_2}
\end{equation}
whenever
\[
 \Re\lambda\ge-\frac\mu2+\delta,
 \qquad |\Im\lambda|\ge R_\delta.
\]
Its Fredholm essential spectral bound is precisely $-\mu/2$.
Choose
\begin{equation}\label{eq:pincher-omega-choice}
 0<\omega<\frac\mu2
\end{equation}
so that $\{\operatorname{Re}\lambda=-\omega\}$ contains no eigenvalue.
Then the spectrum in $\{\operatorname{Re}\lambda>-\omega\}$ consists of finitely many eigenvalues of finite algebraic multiplicity.  
Let $P_{\rm c}$ be the sum of their Riesz projections, put $E_{\rm c}=\operatorname{ran}P_{\rm c}$, and let $E_{\rm s}=\ker P_{\rm c}$.  
Then $E_{\rm c}$ is finite-dimensional, $E_{\rm s}$ is invariant, and its semigroup decays at every rate smaller than $\omega$.  
In particular, every positive, zero and weakly negative core mode may be placed in the single finite-dimensional controlled space $E_{\rm c}$.
\end{proposition}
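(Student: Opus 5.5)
The proof splits $\mathcal L$ into an end part, handled by the exact far-field operator $B$, and a compact core, handled by elliptic theory. The two pieces are glued by a parametrix. The spectral splitting then follows from analytic Fredholm theory combined with a Gearhart--Pr\"uss type resolvent bound.

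\emph{Step 1: generation.} Write $\mathcal L=\mathcal L_0-(\mathcal A_Uv)(0)DU$. Since $k\ge2>1/2$, the trace term is a rank-one perturbation. It is bounded from the graph norm into $\mathcal X^k_2$ with relative bound zero, because evaluating the fourth-order expression at the point $0$ is controlled by interpolation between $H^{k+4}$ and $H^k$. It therefore suffices to treat $\mathcal L_0$. Its principal part $-\tfrac12(1+U'^2)^{-2}\partial_\zeta^4$ is uniformly strongly elliptic, since $U'$ is bounded by \cref{thm:certified-positive-conical-profile}. The transport term $-\mu\zeta\partial_\zeta$ has linearly growing coefficient. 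On $H^k_2$ it contributes to $\Re\langle -\mu\zeta v',v\rangle$ only a zeroth-order term of size $\mu(\tfrac12+O(\langle\zeta\rangle^{-2}))\|v\|^2$ after integration by parts. The same computation applies to derivatives, using the commutator $[\partial_\zeta,\zeta\partial_\zeta]=\partial_\zeta$. This gives a Lumer--Phillips dissipativity estimate $\Re\langle(\mathcal L_0-\omega_0)v,v\rangle_{H^k_2}\le0$ for some $\omega_0$. The same argument applied to the adjoint gives range density, so the closure generates a $C_0$-semigroup. Invertibility for large positive real $\lambda$ follows from the same estimate.

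\emph{Step 2: Fredholm property for $\Re\lambda>-\mu/2$.} We build a parametrix from three pieces.
\begin{itemize}
\item On the ends we pass to $y=\log|\zeta|$ and $w=e^{(1/2-m)y}v$ with $m=2$. By \eqref{eq:pincher-log-transport}--\eqref{eq:pincher-log-fourth}, $\lambda-\mathcal L_0$ becomes $\lambda-\mu(-\tfrac12-\partial_y)+e^{-4y}\mathcal P_4(\partial_y)+O(e^{-2y})$.
\item For the model $\lambda+\mu/2+\mu\partial_y+a_\alpha e^{-4y}\prod_j(\partial_y+\tfrac32-j)$ we invert explicitly. Transported back to $\zeta$ this is exactly the resolvent of $B$, namely $\int_0^\infty e^{-\lambda s}e^{sB}\,ds$. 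Formula \eqref{eq:far-semigroup} gives $\|e^{sB}\|_{H^k_2}\lesssim e^{-\mu s/2}$, because the weight $\langle\zeta\rangle^{-4}$ absorbs the dilation by $e^{\mu s}$: $L^2_2$-norms scale like $e^{\mu s}e^{-(2m-1)\mu s/2}=e^{-\mu s/2}$. Hence the model resolvent is bounded for $\Re\lambda>-\mu/2$.
\item On a compact core $|\zeta|\le R$ we use a standard elliptic fourth-order parametrix.
\end{itemize}
The pieces are glued with cutoffs. The errors are commutators, which are compact because they are lower order and compactly supported, together with the decaying coefficient differences $O(\langle\zeta\rangle^{-1})$ relative to the model. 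The latter are \emph{not} compact, as the paper warns. Instead we make them small in operator norm by taking $R$ large: in the log variable they carry factors $e^{-y}$ times operators bounded by the model graph norm. The result is left and right parametrices modulo compact plus small operators, hence the Fredholm property.

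Index zero follows by homotopy in $\lambda$ to the large real values where the operator is invertible. Conversely, for $\Re\lambda<-\mu/2$, Weyl sequences of outgoing fronts built from $B$ show that the line $-\mu/2$ is precisely the essential bound.

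\emph{Step 3: high-frequency estimate and splitting.} I expect \eqref{eq:pincher-high-frequency-resolvent} to be the main obstacle, because uniformity in $\Im\lambda$ must hold simultaneously in the core and on the end. In the core, large $|\Im\lambda|$ is handled by a semiclassical scaling: the fourth-order ellipticity with the $|\Im\lambda|$ term gives parameter-elliptic bounds uniform in $\lambda$. On the end, the transport resolvent $\int_0^\infty e^{-\lambda s}e^{sB}\,ds$ is uniformly bounded by $1/(\Re\lambda+\mu/2)$. The glued error terms gain a factor $|\lambda|^{-1/4}$ from parameter-ellipticity, so they are absorbed once $|\Im\lambda|\ge R_\delta$.

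It follows that the spectrum in $\Re\lambda>-\omega$ lies in a compact set. By analytic Fredholm theory on this connected region, where the resolvent exists for large real $\lambda$, it consists of finitely many isolated eigenvalues of finite algebraic multiplicity. The Riesz projection $P_{\rm c}$ then has finite rank and commutes with the semigroup. On $E_{\rm s}$ the resolvent is bounded on $\Re\lambda\ge-\omega'$ for every $\omega'<\omega$: it is bounded in a compact window by continuity, since no spectrum lies on $\Re\lambda=-\omega$, and bounded at high frequency by Step 3. The Gearhart--Pr\"uss theorem in Hilbert space then gives decay at every rate smaller than $\omega$. Eigenvalues with nonnegative or weakly negative real part lie in $E_{\rm c}$ by construction.
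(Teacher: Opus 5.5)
Your overall architecture matches the paper's: an end parametrix glued to a compact core, index zero, analytic Fredholm theory, Riesz projection and Gearhart--Pr\"uss. The gap is that the end parametrix rests on a false estimate. On the even space $H^k_2$ one does \emph{not} have $\|e^{sB}\|\lesssim e^{-\mu s/2}$. The constant function lies in the graph domain, since $\int\langle\zeta\rangle^{-4}\dd\zeta<\infty$ and $\zeta\partial_\zeta 1=0$. Moreover $B1=\mu\cdot 1$, so $\|e^{sB}\|\ge e^{\mu s}$ and $\mu$ is an eigenvalue of your model. This is exactly the growing constant-like component the paper records just after \eqref{eq:far-semigroup}.

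Your count $e^{\mu s}e^{-3\mu s/2}$ silently replaces $\langle\zeta\rangle^{-4}$ by the homogeneous weight $|\zeta|^{-4}$. With the true weight, the rescaled region $|\eta|\lesssim e^{-\mu s}$ contributes about $e^{\mu s}|g(0)|$ to the norm, where $g=e^{-a_\alpha\tau(s)\partial_\zeta^4}f$. The $y$-line model with coefficient $e^{-4y}$ does not rescue the claim either. It is posed on $\zeta>0$ with weight $\zeta^{-4}$, and the whole-line formula \eqref{eq:far-semigroup} does not preserve that space, because $g(0)\neq0$ in general.

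Consequences:
\begin{itemize}
\item $\int_0^\infty e^{-\lambda s}e^{sB}\,\dd s$ diverges for $\Re\lambda\le\mu$, and $\lambda-B$ is not injective at $\lambda=\mu$.
\item Localising the data to $|\zeta|\ge R$ makes the constant component exponentially small in $R^{4/3}$, but not zero, so the integral still diverges there.
\item Hence neither the end parametrix of Step~2 nor the uniform end bound $1/(\Re\lambda+\mu/2)$ of Step~3 is established. These are precisely the inputs to the Fredholm property and to \eqref{eq:pincher-high-frequency-resolvent}.
\end{itemize}

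What is missing is a genuinely one-sided end inverse that never passes through the whole-line semigroup. The paper writes $(\lambda-\mathcal L_0)v=f$ on $|\zeta|\ge R$ as the first-order system \eqref{eq:fredholm-first-order-system}, with the actual variable coefficients. It then proves the exponential dichotomy of \cref{lem:fredholm-end-dichotomy}: the algebraic branch $|\zeta|^{1-\lambda/\mu}$ is admissible exactly when $\Re\lambda>-\mu/2$, one WKB branch decays and two grow. The Volterra kernels give a one-sided Green operator bounded uniformly for $\lambda$ in compact subsets of the half-plane. The non-compact error in $a_4$ is handled inside that contraction rather than by a model-resolvent bound. The vertical estimate is proved directly in $y=\log\zeta$, using the Fourier gap $\delta$ of $\lambda+\mu/2+\mu\partial_y$, a parameter-elliptic core estimate, and a translation--compactness argument. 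If you want to keep $B$, you must split off its eigenvalue $\mu$ by a finite-rank correction and then actually prove the bound on the complement; the scaling heuristic does not give it.

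Two smaller repairs are also needed in Step~1.
\begin{itemize}
\item A perturbation with relative bound zero need not preserve $C_0$-generation when the semigroup is not analytic. Here $\mathcal L_0$ is not sectorial, since $-\mu/2+i\R\subset\sigma_{\rm ess}(\mathcal L_0)$.
\item $\mathcal L_0$ does not preserve $\{v(0)=0\}$, so ``it suffices to treat $\mathcal L_0$'' needs justification. The correct reduction is the invariance $\mathcal L_0DU=4\mu DU$. This gives the triangular form \eqref{eq:fredholm-triangular-quotient} and the quotient semigroup $\Pi e^{s\mathcal L_0}$, and it transfers the Fredholm and resolvent bounds to $\mathcal L$.
\end{itemize}
Weyl sequences should be built on the line $\Re\lambda=-\mu/2$ itself, as in the paper, not for $\Re\lambda<-\mu/2$. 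Your index-zero argument by homotopy from large real $\lambda$ is fine.
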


We henceforth write
\begin{equation}\label{eq:controlled-stable-restrictions}
 \mathcal L_{\rm c}:=\mathcal L\big|_{E_{\rm c}},
 \qquad
 \mathcal L_{\rm s}:=\mathcal L\big|_{E_{\rm s}}.
\end{equation}

\begin{proof}
The generation, Fredholm, high-frequency and essential-spectrum assertions are proved in \cref{app:terminal-fredholm}.  
Analytic Fredholm theory then makes the spectrum to the right of $-\omega$ discrete with finite algebraic multiplicities.  
Estimate \eqref{eq:pincher-high-frequency-resolvent} confines it to a compact set and gives the finite Riesz splitting.  
On the complementary Hilbert subspace the resolvent is uniform on every line $\operatorname{Re}\lambda=-\omega'$ with $0<\omega'<\omega$; the Gearhart--Pr\"uss theorem \cite{Gearhart1978,Pruss1984}, applied after shifting by $\omega'$, gives exponential decay.
\end{proof}

In contrast with classical maximal-regularity theory \cite{DaPratoGrisvard1975,deSimon1964}, the drift prevents the use of ordinary sectorial maximal regularity.  
In fact the essential spectrum contains $-\mu/2+i\R$.  
If the stable restriction had ordinary $L^2$ maximal regularity after a shift by $0<\eta<\omega$, its resolvent would satisfy
\[
 \sup_{\tau\in\R}
 \left\|\tau\{i\tau-(\mathcal L_{\rm s}+\eta)\}^{-1}\right\|<\infty.
\]
The distance from $i\tau$ to the shifted essential line is the constant $\mu/2-\eta$, so the resolvent norm is bounded below by $(\mu/2-\eta)^{-1}$ and the displayed supremum diverges.  
The correct derivative is the derivative along the dilation characteristics,
\begin{equation}\label{eq:material-derivative}
 \mathfrak D_\mu v
 :=v_s+\mu\zeta v_\zeta-\mu v.
\end{equation}
It is intrinsic to the shrinking core: if
$h_A(z)=A(s)v(z/A(s),s)$ and $A_s=-\mu A$, then
\begin{equation}\label{eq:physical-material-derivative}
 \partial_sh_A(z)=A\mathfrak D_\mu v(z/A,s).
\end{equation}
For $k\ge2$ put
\begin{equation}\label{eq:convective-trace-space}
 X_k:=\mathcal X^{k-2}_2,
 \qquad Y_k:=H^{k+2}_2(\R)_{\rm even}\cap\{v(0)=0\},
 \qquad \mathcal T_k:=\mathcal X^k_2.
\end{equation}
Thus $\mathcal T_k=(X_k,Y_k)_{1/2,2}$ is the natural trace space for the straightened parabolic problem.  
Note that $Y_k$ does not contain the drift graph condition: the latter belongs to the stationary generator, whereas the evolution estimate uses the material derivative \eqref{eq:material-derivative}.

\begin{proposition}[Convective maximal regularity]
\label{prop:convective-maximal-regularity}
Let $k\ge8$.  
On every interval $I_n=[n,n+1]$, the equation
\begin{equation}\label{eq:convective-linear-equation}
 v_s-\mathcal Lv=f
\end{equation}
with $f\in L^2(I_n;X_k)$ has the uniform estimate
\begin{align}
 &\|v\|_{C(I_n;\mathcal T_k)}
 +\|v\|_{L^2(I_n;H^{k+2}_2)}
 +\|\mathfrak D_\mu v\|_{L^2(I_n;H^{k-2}_2)}
 \nonumber\\
 &\hspace{8em}\le C\left(
 \|v(n)\|_{\mathcal T_k}
 +\|f\|_{L^2(I_n;X_k)}\right),
 \label{eq:convective-maximal-estimate}
\end{align}
where $C$ is independent of $n$.  
Equation \eqref{eq:convective-linear-equation} is understood as
\begin{equation}\label{eq:convective-linear-straightened}
 \mathfrak D_\mu v-\mathcal A_Uv
 +(\mathcal A_Uv)(0)DU=f.
\end{equation}

Let $P_{\rm c}$ and $P_{\rm s}$ be the spectral projections in \cref{prop:finite-core-splitting}, and put
\[
 \gamma_{\rm c}:=\max\left\{0,
 \max_{\lambda\in\sigma(\mathcal L_{\rm c})}(-\Re\lambda)\right\}.
\]
Here and below the inner maximum is omitted when $E_{\rm c}=\{0\}$.
If $\gamma_{\rm c}<\eta<\omega$, $e^{\eta s}f\in L^2(\R_+;X_k)$ and $\xi\in P_{\rm s}\mathcal T_k$, there is a unique solution with
\begin{equation}\label{eq:linear-stable-future-conditions}
 P_{\rm s}v(\cdot,0)=\xi,
 \qquad
 P_{\rm c}v(s)
 =-\int_s^\infty e^{(s-\tau)\mathcal L_{\rm c}}
 P_{\rm c}f(\tau)\,\dd\tau.
\end{equation}
It satisfies the exponentially weighted version of \eqref{eq:convective-maximal-estimate} on $\R_+$, together with 
\begin{equation}\label{eq:linear-convective-halfline-estimate}
 \sup_{s\ge0}e^{\eta s}\|v(s)\|_{\mathcal T_k}
 \le C_\eta
 \left(\|\xi\|_{\mathcal T_k}
 +\|e^{\eta s}f\|_{L^2(\R_+;X_k)}\right).
\end{equation}
The Riesz projections obtained on $H^k_2$ and on $X_k$ agree on their common trace space.
\end{proposition}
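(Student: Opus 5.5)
The plan has two parts: a uniform local estimate on unit intervals, and a half-line estimate obtained by splitting with the spectral projections of \cref{prop:finite-core-splitting}.

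\textbf{Step 1: straighten the drift.} Pass to the dilation characteristics. By \eqref{eq:physical-material-derivative}, setting $h(z,s)=A(s)v(z/A(s),s)$ with $A_s=-\mu A$, the material derivative $\mathfrak D_\mu v$ becomes an ordinary time derivative $\partial_s h$. Equation \eqref{eq:convective-linear-straightened} then becomes a fourth-order parabolic equation with principal part $-A^{-4}\tfrac12(1+U'(z/A)^2)^{-2}\partial_z^4$. On an interval $I_n$ the factor $A$ varies only by $e^{\pm\mu}$, so the coefficients are uniformly parabolic. By \cref{thm:certified-positive-conical-profile} and the differentiated conical tail, the coefficients have bounded $C^j$ norms, with lower-order terms decaying like $\langle\zeta\rangle^{-1},\dots,\langle\zeta\rangle^{-4}$.

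\textbf{Step 2: uniform local estimate.} Maximal $L^2$ regularity for uniformly parabolic fourth-order systems with bounded smooth coefficients (de~Simon, Da~Prato--Grisvard) gives the estimate on $I_n$. Three points need care.
\begin{itemize}
\item The weight $\langle\zeta\rangle^{-4}$ is handled by conjugation. Conjugation produces commutators of lower order whose coefficients are bounded, since derivatives of the weight are $O(\langle\zeta\rangle^{-1})$ relative to it; these are absorbed by interpolation and Gronwall on a unit interval.
\item The transport term $\mu\zeta\partial_\zeta$ is exactly what $\mathfrak D_\mu$ absorbs. The remaining zeroth-order term $\mu v$ is harmless.
\item The nonlocal term $(\mathcal A_Uv)(0)DU$ is a rank-one perturbation. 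For $k\ge8$, Sobolev embedding on a compact core gives $|(\mathcal A_Uv)(0)|\lesssim\|v\|_{H^{k+2}_2}^{\theta}\|v\|_{\mathcal T_k}^{1-\theta}$ for some $\theta<1$. This is absorbed into the left side.
\end{itemize}
Uniformity in $n$ holds because the straightened problem on $I_n$ is, after translating $s$, the same problem as on $I_0$: the coefficients of $\mathcal L$ are independent of $s$.

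\textbf{Step 3: half-line problem.} Apply $P_{\rm c}$ and $P_{\rm s}$, which commute with $\mathcal L$. The controlled part lives in a finite-dimensional space, and the future Duhamel formula \eqref{eq:linear-stable-future-conditions} gives the unique bounded solution there. Since every eigenvalue of $\mathcal L_{\rm c}$ has $\Re\lambda\ge-\gamma_{\rm c}>-\eta$, Young's inequality yields the weighted bound $e^{\eta s}\|P_{\rm c}v(s)\|\lesssim\|e^{\eta s}f\|_{L^2}$. For the stable part, use the mild solution and the decay of $e^{s\mathcal L_{\rm s}}$ at rates below $\omega>\eta$; this controls $\sup_s e^{\eta s}\|P_{\rm s}v(s)\|_{\mathcal T_k}$. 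To turn that into the $\mathcal T_k$ trace bound, use Step 2 on each $I_n$ with initial value $v(n)$, and sum with the weights $e^{2\eta n}$. This upgrades the bound to the full exponentially weighted maximal-regularity estimate. Uniqueness follows because a homogeneous solution with $P_{\rm s}v(0)=0$ and decaying $P_{\rm c}v$ must vanish.

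\textbf{Step 4: compatibility of projections.} Both Riesz integrals are contour integrals of the same resolvent around the same finite set of eigenvalues. Elliptic regularity of the eigenvectors and generalised eigenvectors shows they lie in every $H^k_2$, so the two projections agree on the common trace space.

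\textbf{Main obstacle.} The semigroup bound in $\mathcal T_k$ from \cref{prop:finite-core-splitting} must be compatible with the space-time norms. $\mathcal T_k$ must be an exact interpolation trace for the straightened problem, and the constant in Step 2 must not deteriorate through the unbounded transport coefficient $\zeta$ in the weighted space. I would handle this first by an explicit energy identity in log-coordinates on the end, using \eqref{eq:pincher-log-transport}. There the transport term is skew plus the constant $\tfrac32-m$, so it produces only boundary-free contributions.
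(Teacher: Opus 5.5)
Your Step~3 has a genuine gap in the stable part. \cref{prop:finite-core-splitting} gives decay of $e^{s\mathcal L_{\rm s}}$ on a fixed space $\mathcal X^j_2$. At level $j=k$ this space is $\mathcal T_k$, but the forcing lies only in $L^2(\R_+;X_k)$, where $X_k=\mathcal X^{k-2}_2$ is two derivatives weaker. The Duhamel term $\int_0^se^{(s-\tau)\mathcal L_{\rm s}}P_{\rm s}f(\tau)\,\dd\tau$ therefore cannot be bounded in $\mathcal T_k$ by the semigroup estimate alone.

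Suppose you grant a local smoothing bound $\|e^{t\mathcal L}\|_{X_k\to\mathcal T_k}\lesssim t^{-1/2}$ for $0<t\le1$; this would itself need proof, since the transport-dominated generator is not sectorial. The kernel $t^{-1/2}$ is still not square integrable at $0$. So it cannot be paired with $L^2$-in-time forcing to produce $\sup_se^{\eta s}\|P_{\rm s}v(s)\|_{\mathcal T_k}$. Going from $L^2X_k$ data to $C\mathcal T_k$ control is a maximal-regularity trace statement, not a semigroup statement; this is why the paper warns against inferring analyticity from the semigroup.

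Your next sentence is then circular: it applies Step~2 on $I_n$ ``with initial value $v(n)$'', but Step~2 takes $\|v(n)\|_{\mathcal T_k}$ as an input. The log-coordinate energy identity in your final paragraph does not address this mismatch either.

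The missing idea is to discretise at integer times, as the paper does.
\begin{itemize}
\item Let $h_n\in\mathcal T_k$ be the trace at $s=n+1$ of the solution on $I_n$ with zero initial datum. Step~2 gives $\|h_n\|_{\mathcal T_k}\le C\|f\|_{L^2(I_n;X_k)}$, and this is where the smoothing genuinely enters.
\item The traces $x_n=v(n)$ satisfy $x_{n+1}=e^{\mathcal L}x_n+h_n$ in $\mathcal T_k$, so the semigroup bound is now used on the correct space.
\item The stable part $P_{\rm s}x_n=e^{n\mathcal L_{\rm s}}\xi+\sum_{j<n}e^{(n-1-j)\mathcal L_{\rm s}}P_{\rm s}h_j$ is controlled with weight $e^{\eta n}$ because $\eta<\omega$.
\item The controlled part $P_{\rm c}x_n=-\sum_{j\ge n}e^{(n-1-j)\mathcal L_{\rm c}}P_{\rm c}h_j$ converges with the same weight because $\eta>\gamma_{\rm c}$.
\item Then fill in each $I_n$ by Step~2 and sum with weights $e^{2\eta n}$.
\end{itemize}
Alternatively, you could keep your mild formula at the $X_k$ level, using \cref{prop:finite-core-splitting} at level $k-2$ together with your Step~4 compatibility. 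You would then upgrade by applying Step~2 to $\phi v$ with a time cutoff satisfying $\phi(n)=0$ and $\phi(n+1)=1$. This gives $\|v(n+1)\|_{\mathcal T_k}\le C(\|v\|_{L^2(I_n;X_k)}+\|f\|_{L^2(I_n;X_k)})$.

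The rest of your plan matches the paper.
\begin{itemize}
\item The paper's straightening $y=e^{-\mu\theta}\zeta$ is your physical variable renormalised so that $A(n)=1$, which your translation argument implicitly uses. Note that the $s$-time principal coefficient is $-A^{4}a_4$, not $-A^{-4}a_4$, and it degenerates as $n\to\infty$ without that renormalisation.
\item You absorb the rank-one waist term directly. The paper instead proves the estimate for $\mathcal L_0$ and transfers it via \eqref{eq:fredholm-triangular-quotient}; both routes work.
\item Your Step~4 is the paper's resolvent/contour argument.
\end{itemize}
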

\begin{proof}
It is enough first to treat the unnormalised operator $\mathcal L_0$.
On $I_n$ write $s=n+\theta$ and set
\begin{equation}\label{eq:dilation-straightening}
 y=e^{-\mu\theta}\zeta,
 \qquad
 w(y,\theta)=e^{-\mu\theta}v(e^{\mu\theta}y,n+\theta).
\end{equation}
Then
\[
 w_\theta=e^{-\mu\theta}\mathfrak D_\mu v.
\]
The transformed principal coefficient is
\[
 -\frac{e^{-4\mu\theta}}
 {2\{1+U'(e^{\mu\theta}y)^2\}^2},
\]
which is uniformly negative for $0\le\theta\le1$.  
All transformed coefficients and their required weighted derivatives are bounded uniformly in $n$; the weights $\langle e^{\mu\theta}y\rangle^{-4}$ and $\langle y\rangle^{-4}$ are uniformly equivalent.  
The ordinary one-dimensional fourth-order parabolic estimate therefore gives
\begin{equation}\label{eq:straightened-parabolic-estimate}
 \|w\|_{C([0,1];\mathcal T_k)}
 +\|w\|_{L^2(0,1;H^{k+2}_2)}
 +\|w_\theta\|_{L^2(0,1;H^{k-2}_2)}
 \le C\bigl(\|w(0)\|_{\mathcal T_k}+\|\widetilde f\|_{L^2X_k}\bigr).
\end{equation}
One may obtain this estimate directly by differentiating through order $k$, integrating the principal term twice by parts, and then applying the same estimate to the equation.  
The coefficient of $w_{yyyy}$ is separated from zero, so the method of continuity from the constant-coefficient biharmonic equation is closed by the resulting G\aa rding estimate.

Returning to $v$ gives \eqref{eq:convective-maximal-estimate}.  
The normalised operator is the quotient of $\mathcal L_0$ by $\operatorname{span}\{DU\}$ under the bounded projection $\Pi$; the triangular identity \eqref{eq:fredholm-triangular-quotient} therefore transfers the estimate to
$\mathcal L$.

For the half-line problem it is important not to infer analyticity from the semigroup.  
Instead let $h_n\in\mathcal T_k$ be the terminal trace of the zero-initial-data problem on $[n,n+1]$; in base-space extrapolation notation,
\[
 h_n=\int_n^{n+1}e^{(n+1-\tau)\mathcal L}f(\tau)\,\dd\tau.
\]
The unit-interval estimate controls this trace in $\mathcal T_k$.  
If $x_n=v(n)$, the recurrence $x_{n+1}=e^{\mathcal L}x_n+h_n$ is solved by
\begin{align*}
 P_{\rm s}x_n
 &=e^{n\mathcal L_{\rm s}}\xi
   +\sum_{j=0}^{n-1}e^{(n-1-j)\mathcal L_{\rm s}}P_{\rm s}h_j,\\
 P_{\rm c}x_n
 &=-\sum_{j=n}^\infty
   e^{(n-1-j)\mathcal L_{\rm c}}P_{\rm c}h_j.
\end{align*}
Exponential stability on $E_{\rm s}$ and $\eta<\omega$ control the forward series.  
On the finite-dimensional space $E_{\rm c}$, the strict inequality $\eta>\gamma_{\rm c}$ controls the backward series, including the polynomial factors from Jordan blocks.  
Thus both series converge with the required $\eta$-weighted trace bound.  
Solve between consecutive traces by the unit-interval problem and sum \eqref{eq:convective-maximal-estimate}.  
This proves the half-line estimate without ordinary maximal regularity for the transport-dominated generator.
Finally, the resolvent equation bootstraps by four local derivatives, while uniqueness identifies the resolvents on intersections; the contour formula for a Riesz projection proves compatibility across the Sobolev scale.
\end{proof}

The linear estimate is now strong enough to treat the nonlinear normalised core equation, provided that we retain control of the neutral conical end.

\begin{proposition}[Weighted terminal core semiflow]
\label{prop:terminal-core-semiflow}
Let $k\ge8$, let $m=2$, and fix $R_0\ge1$.  
Put
\begin{equation}\label{eq:pincher-global-smallness}
 \mathfrak c_k(v)
 :=\sum_{j=0}^k\sup_{\zeta\in\R}
   \langle\zeta\rangle^{j-1}
   |\partial_\zeta^jv(\zeta)|.
\end{equation}
On $I_n=[n,n+1]$ use the straightening \eqref{eq:dilation-straightening}, put $R_n=R_0e^{\mu n}$, and cover $|y|\le2R_n$ by unit intervals $B_\rho=(\rho-2,\rho+2)$.  
With $Q_\rho=B_\rho\times[0,1]$, define
\[
 \widetilde v(y,\theta)=e^{-\mu\theta}
 v(e^{\mu\theta}y,n+\theta),
 \qquad
 \widetilde f(y,\theta)=e^{-\mu\theta}
 f(e^{\mu\theta}y,n+\theta),
\]
and put
\begin{align*}
 \mathfrak g_{j,\rho}(v)
 :={}&\|\partial_y^j\widetilde v\|_{L^\infty(0,1;H^2(B_\rho))}
 +\|\partial_y^j\widetilde v\|_{L^2(0,1;H^4(B_\rho))}\\
 &+\|\partial_y^j\partial_\theta\widetilde v\|_
 {L^2(0,1;L^2(B_\rho))}.
\end{align*}
Then
\begin{align}
 \|v\|_{\mathscr G^k_R(I_n)}
 :={}&\sum_{j=0}^k\sup_{|\rho|\le2R_n}
 \langle\rho\rangle^{j-1}\mathfrak g_{j,\rho}(v),
 \label{eq:conical-parabolic-graph-norm}\\
 \|f\|_{\mathscr H^k_R(I_n)}
 :={}&\sum_{j=0}^k\sup_{|\rho|\le2R_n}
 \langle\rho\rangle^{j-1}
 \|\partial_y^j\widetilde f\|_{L^2(0,1;L^2(B_\rho))}.
 \label{eq:conical-parabolic-source-norm}
\end{align}
Define its trace norm by
\begin{equation}\label{eq:conical-parabolic-trace-norm}
 \|\xi\|_{\operatorname{tr}\mathscr G^k_{R_n}}
 :=\inf\left\{
 \|V\|_{\mathscr G^k_R(I_n)}:V(n)=\xi\right\}.
\end{equation}
We denote by $\operatorname{tr}\mathscr G^k_{R_n}$ the Banach space of traces for which this infimum is finite.
We use the same notation after restriction to the moving cylinder and write
\[
 \|v\|_{\mathbb C_R^k}:=\sup_{n\ge0}
 \|v\|_{\mathscr G^k_R(I_n)},
 \qquad
 \|f\|_{\mathbb H_{R,1}^k}:=\sum_{n=0}^\infty
 \|f\|_{\mathscr H^k_R(I_n)}.
\]
We also set
\[
 \mathscr P^k
 =\left\{v\in\mathcal X^k_2:\mathfrak c_k(v)<\infty\right\},
 \qquad
 \|v\|_{\mathscr P^k}
 =\|v\|_{\mathcal X^k_2}+\mathfrak c_k(v).
\]
Define the dilation differential and the nonlinear remainder of the radial operator by
\[
 \mathscr Dv:=v-\zeta v',
 \qquad
 \mathcal R_F(v):=\mathcal F[U+v]-\mathcal F[U]-\mathcal A_Uv.
\]
The waist-normalised nonlinearity is
\begin{equation}\label{eq:defined-core-nonlinearity}
 \mathcal N(v)
 :=\mathcal R_F(v)-\mathcal R_F(v)(0)DU
 -\bigl\{(\mathcal A_Uv)(0)+\mathcal R_F(v)(0)\bigr\}\mathscr Dv,
\end{equation}
and $g\in X_k$ will denote a prescribed additive forcing with zero waist trace.  
In particular $\mathcal N(0)=D\mathcal N(0)=0$.
On a sufficiently small ball in $\mathscr P^k$ the normalised terminal equation defines a $C^1$ local semiflow through time one.  
Its time-one map has the form
\begin{equation}\label{eq:terminal-time-one-map}
 \Psi(v)=e^{\mathcal L}v+\mathcal R(v),
 \qquad \mathcal R(0)=D\mathcal R(0)=0,
\end{equation}
and, after reducing the ball if necessary,
\begin{equation}\label{eq:terminal-time-one-tame}
 \|\mathcal R(v)-\mathcal R(w)\|_{\mathscr P^k}
 \le C\bigl(\|v\|_{\mathscr P^k}+\|w\|_{\mathscr P^k}\bigr)
       \|v-w\|_{\mathscr P^k}.
\end{equation}
The corresponding forced estimate is uniform on every unit time interval from $L^2H^{k-2}_2\cap\mathscr H_R^k$ to the Hilbert graph norm together with $\mathscr G_R^k$.
For a half-line solution of the linearised equation \eqref{eq:convective-linear-straightened}, the characteristic boxes also satisfy
\begin{equation}\label{eq:conical-halfline-summable-estimate}
 \|v\|_{\mathbb C_R^k}
 \le C\left\{
  \|v(0)\|_{\operatorname{tr}\mathscr G^k_{R_0}}
  +\sup_{s\ge0}e^{\eta s}\|v(s)\|_{H^k_2}
  +\|f\|_{\mathbb H_{R,1}^k}
 \right\}.
\end{equation}
The same assertion holds, with constants independent of $R_0\ge1$, when $\mathfrak c_k$ is restricted to $|\zeta|\le2R_0e^{\mu s}$ and the nonlinear term is cut off at that moving radius.
\end{proposition}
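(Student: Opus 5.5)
The plan is to obtain everything from the unit-interval straightened estimate in \cref{prop:convective-maximal-regularity}, applied in two norms at once: the Hilbert norm $\mathcal X^k_2$, and the localised box norms $\mathscr G^k_R$ on the characteristic cylinders $Q_\rho$. First, the linear box estimate. On $I_n$ I straighten by \eqref{eq:dilation-straightening}. In the variable $y$ the equation \eqref{eq:convective-linear-straightened} becomes a uniformly parabolic fourth-order equation, with principal coefficient bounded above by $-c e^{-4\mu}$ and lower-order coefficients that decay like $\langle e^{\mu\theta}y\rangle^{-1},\dots,\langle e^{\mu\theta}y\rangle^{-4}$. The rank-one term $(\mathcal A_Uv)(0)DU$ is a bounded functional of $v$ near $y=0$ times a function decaying like $|\zeta|^{-3}$.

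Next I apply interior parabolic estimates on $Q_\rho$, using a cutoff supported in $B_\rho$ and enlarging to $(\rho-3,\rho+3)$. Differentiating $j$ times produces commutators with coefficient derivatives that carry extra factors $\langle\rho\rangle^{-1}$. Multiplying by the weight $\langle\rho\rangle^{j-1}$ therefore closes the estimate inductively in $j$, since lower derivatives with larger weights are absorbed. The initial data enter through the trace norm \eqref{eq:conical-parabolic-trace-norm}, by subtracting an extension $V$. Taking the supremum over $|\rho|\le2R_n$ gives the unit-interval bound from $\mathscr H^k_R$ and the trace to $\mathscr G^k_R$. Because the coefficients are uniform in $n$, the constants do not depend on $n$ or on $R_0$.

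For the half-line estimate \eqref{eq:conical-halfline-summable-estimate}, I chain these unit-interval bounds. The trace of $v$ at $s=n$ in $\operatorname{tr}\mathscr G^k_{R_n}$ is controlled by the box norm on $I_{n-1}$, after the change of variables. A naive iteration would accumulate a constant $C^n$. To avoid this I split each interval. Inside a fixed compact core $|y|\le R'$, Sobolev embedding bounds the box norm by $C\|v(n)\|_{H^k_2}$, which decays like $e^{-\eta n}$. On the outer boxes, transport carries data outward: $\zeta\mapsto e^{\mu}\zeta$ per unit time. The weight $\langle\rho\rangle^{j-1}$ is invariant up to the factor $e^{\mu(j-1)}$, which is exactly compensated by the scaling $e^{-\mu\theta}$ in $\widetilde v$, since conical data $O(|\zeta|)$ are neutral. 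Hence the outer part propagates with constant $1+O(R'^{-1})$, not $C$, because the lower-order coefficients are small there. Choosing $R'$ large, the resulting recursion is $a_{n+1}\le(1+\epsilon_n)a_n+Ce^{-\eta n}\sup e^{\eta s}\|v\|_{H^k_2}+\|f\|_{\mathscr H(I_n)}$ with summable $\epsilon_n$, and this yields the sup bound.

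For the nonlinear semiflow, the key input is an estimate on $\mathcal N$. Since $\mathcal N$ is built from $\mathcal F[U+v]$, I expand $\mathcal F$ as a rational function of $U+v$ and its derivatives up to order four. The quantities $\langle\zeta\rangle^{j-1}\partial^j v$ are small in $\mathfrak c_k$, so relative to $U\sim\alpha|\zeta|$ the perturbation is scale-invariantly small. This gives a pointwise bilinear bound on $\mathcal R_F(v)$ with the same weights, now in the $\mathscr H^k_R$ and $L^2H^{k-2}_2$ norms. For this I use the box norms to control the top derivatives, which requires $k\ge8$ so that Sobolev embedding applies on each box. The trace terms $\mathcal R_F(v)(0)$ and $(\mathcal A_Uv)(0)$ are controlled by the core box. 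The term $(\dots)\mathscr Dv$ is quadratic, and $\mathscr Dv$ lies in the weighted space because the weight $\langle\zeta\rangle^{j-1}$ is compatible with $\zeta v'$.

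A contraction on a unit interval in the space $\{v:\ \|v\|_{C\mathcal X^k_2\cap L^2H^{k+2}_2}+\|v\|_{\mathscr G^k_R}\text{ small}\}$ then gives a $C^1$ local flow; the $C^1$ dependence comes from the implicit function theorem applied to the smooth Nemytskii map. The time-one map has the form \eqref{eq:terminal-time-one-map}, with the tame bound \eqref{eq:terminal-time-one-tame} inherited from the quadratic structure. The restricted version, with $\mathfrak c_k$ cut off at the moving radius, follows identically, because all constants are local to boxes.

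The main obstacle is the chaining step for \eqref{eq:conical-halfline-summable-estimate}: showing that the outgoing conical front propagates with constant essentially $1$, rather than a generic $C>1$. I would first attempt this with an energy estimate in the box variables that exploits the exact scale invariance of the conical weight.
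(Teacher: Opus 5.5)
Your unit-interval box estimate follows essentially the paper's route to \eqref{eq:conical-parabolic-estimate}: dilation straightening, cutoffs on $Q_\rho$, commutators whose coefficient derivatives gain exactly the powers of $\langle\rho\rangle^{-1}$ needed to match the weights $\langle\rho\rangle^{j-1}$, and the initial datum entering through an extension that realises the trace norm. For the unit-interval semiflow, you run a contraction on the linear estimates, whereas the paper uses the weighted energy inequality \eqref{eq:terminal-weighted-energy} with Friedrichs regularisation and compactness; yours is a reasonable alternative.

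The genuine gap is the chaining step for \eqref{eq:conical-halfline-summable-estimate}, which you flag yourself. With a fixed core radius $R'$, the coefficient defect on the outer region is bounded only by a fixed $\epsilon(R')>0$ at every step. The outer region always contains the boxes near $|y|\approx R'$. Data freshly transported out of the core arrives there, and the coefficients there are no closer to their limits at step $n$ than at step $0$. So your recursion is really $a_{n+1}\le(1+\epsilon(R'))a_n+\cdots$. Its product grows like $e^{n\epsilon(R')}$, and no fixed $R'$ gives a bound uniform in $n$. ``Summable $\epsilon_n$'' does not follow from ``constant $1+O(R'^{-1})$''.

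The idea you are missing is the paper's growing split radius: separate the discounted core at $|\zeta|=e^{an}$ with $0<a<\mu$. Because $a<\mu$, the end $\{|\zeta|\ge e^{an}\}$ is forward invariant under the dilation characteristics, so nothing re-enters the core. On that end the coefficients differ from those of the limiting transport operator by $O(e^{-4an})$. Variation of constants then gives $G_{n+1}\le(1+Ce^{-4an})G_n+C\|f\|_{\mathscr H^k_R(I_n)}+Ce^{-\eta n}\sup_se^{\eta s}\|v(s)\|_{H^k_2}$, with a convergent product. The trade-off is the reverse of yours. The core now grows, and converting the discounted Hilbert norm on $|\zeta|\le e^{an}$ into conical box norms costs a factor polynomial in $e^{an}$, which must be paid for by the decay $e^{-\eta n}$.

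A second point in the same step is the choice of the iterated quantity.
\begin{itemize}
\item It cannot be $\|v\|_{\mathscr G^k_R(I_n)}$ itself. Each application of the unit-interval estimate costs a parabolic constant $C>1$, which is exactly the $C^n$ you want to avoid.
\item Your claimed neutrality fails for $L^2$ norms over unit boxes. Under $\zeta\mapsto e^{\mu}\zeta$ only the pointwise quantities $\langle\zeta\rangle^{j-1}|\partial_\zeta^jv|$ are invariant. The quantity $\langle\rho\rangle^{j-1}\|\partial^j_\zeta v\|_{L^2(B_\rho)}$ can increase by a Jacobian factor as large as $e^{\mu/2}$ in a single step for data concentrated below the box scale. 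So a per-step constant $1+\epsilon_n$ is false for it even for the exact limiting flow.
\item The paper instead follows each end box along the dilation characteristic and iterates the resulting transported conical trace norm $G_n$. On $G_n$ the limiting equation is the identity on the cone-slope component and dissipative on the remaining local derivatives. The box norms are recovered afterwards from \eqref{eq:conical-parabolic-estimate} once per interval, so its constant is never compounded.
\item In the core, the box norm contains $L^2_\theta H^{k+4}$ and $\partial_\theta$ components. It is therefore not bounded by Sobolev embedding of $\|v(n)\|_{H^k_2}$; it needs the local parabolic estimate together with the source and the Hilbert maximal-regularity bound.
\end{itemize}
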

\begin{proof}
Write the perturbation equation in the equivalent convective form
\[
 \mathfrak D_\mu v-\mathcal A_Uv+(\mathcal A_Uv)(0)DU
 =\mathcal N(v)+g.
\]
On a unit interval $s=n+\theta$, remove the dilation by setting
\[
 y=e^{-\mu\theta}\zeta,
 \qquad
 w(y,\theta)=e^{-\mu\theta}
 v(e^{\mu\theta}y,n+\theta).
\]
The principal fourth-order coefficient becomes
\[
 -\frac{e^{-4\mu\theta}}
 {2\{1+(U'+v')^2\}^2}.
\]
It is uniformly negative for $0\le\theta\le1$ on a small $\mathfrak c_k$-ball.  
With $\varpi=\langle y\rangle^{-4}$, commute through $k-2$ derivatives and use the $H^{k-2}$--$H^{k+2}$ duality pairing; after two integrations by parts and interpolation one obtains
\begin{align}
 \frac12\frac{\dd}{\dd\theta}
  \|w\|_{H^k_\varpi}^2
 +c\|w\|_{H^{k+2}_\varpi}^2
 \le{}C\|w\|_{H^k_\varpi}^2
 +C\mathfrak c_k(v)\|w\|_{H^{k+2}_\varpi}^2
 +C\|g\|_{H^{k-2}_\varpi}^2.
 \label{eq:terminal-weighted-energy}
\end{align}
All derivatives of the weight divided by the weight are bounded.  
The highest nonlinear commutators have the second form on the right of \eqref{eq:terminal-weighted-energy}; the one-dimensional Moser estimates give no derivative loss.  
The middle term in \eqref{eq:terminal-weighted-energy} is absorbed on a small ball.
The waist normalisation contributes the nonlocal term $-(\mathcal A_Uv)(0)DU$ and its nonlinear analogue.  
Although its trace contains $v^{(4)}(0)$, for $k\ge8$ the local trace estimate and interpolation give
\[
 |(\mathcal A_Uv)(0)|\,\|DU\|_{H^k_2}\|v\|_{H^k_2}
 \le\varepsilon\|v\|_{H^{k+2}_2}^2
 +C_\varepsilon\|v\|_{H^k_2}^2,
\]
and the nonlinear trace has the same bound with the small factor $\mathfrak c_k(v)$.

Use a whole-line Friedrichs regularisation together with cutoff weights.
The displayed estimate, its time-differentiated form and the one-dimensional tame product inequalities are uniform in the regularising parameter and cutoff radius.  
Compactness on bounded intervals and weak compactness in the weighted graph norms give a solution on $\R$ in the limit; the difference estimate gives uniqueness and $C^1$ dependence.  
The waist projection is retained throughout this approximation, so no artificial finite-interval boundary condition is introduced.

For the non-discounted component, apply interior and initial-boundary $L^2$ maximal regularity on every $Q_\rho$ after the same dilation straightening.  
On a unit box the polynomial weights are uniformly comparable, the conical coefficient derivatives have the precise powers needed to commute the weights, and neighbouring boxes have uniformly finite overlap.  
The constant-coefficient estimate, coefficient freezing and absorption give
\begin{equation}\label{eq:conical-parabolic-estimate}
 \|v\|_{\mathscr G_R^k(I_n)}
 \le C\left\{
 \|v(n)\|_{\operatorname{tr}\mathscr G^k_{R_n}}
 +\|v\|_{C(I_n;H^k_2)}
 +\|g\|_{\mathscr H_R^k(I_n)}
 \right\}.
\end{equation}
The initial half-box estimate supplies precisely the displayed trace term.
A partition into the compact core and the unit end boxes, followed by the method of continuity, proves \eqref{eq:conical-parabolic-estimate} on the whole moving cylinder.  
This is the required conical propagation estimate.
In particular, no $L^1_tC^{k-4}_x\to L^\infty_tC^k_x$ endpoint is used.
To pass to the half-line, follow each end box along the dilation characteristic.  
The limiting transport equation acts as the identity on the cone-slope component and is dissipative on the complementary local derivatives.  
Fix $0<a<\mu$.  
After separating the discounted core at $|\zeta|=e^{an}$, the coefficient difference on the remaining end is $O(e^{-4an})$.  
Variation of constants therefore gives the discrete bound
\[
 G_{n+1}\le(1+Ce^{-4an})G_n
   +C\|f\|_{\mathscr H_R^k(I_n)}
   +Ce^{-\eta n}\sup_s e^{\eta s}\|v(s)\|_{H^k_2},
\]
where $G_n$ is the transported conical trace norm.  
Since $\sum_ne^{-4an}<\infty$, the product of the first factors is bounded; summing the remaining terms proves \eqref{eq:conical-halfline-summable-estimate}.  
This also explains why the $\ell^1$ source norm (rather than an $\ell^\infty$ norm) is the natural topology for the neutral slope channel.
Taylor expansion of the smooth rational coefficients about $U$ now gives \eqref{eq:terminal-time-one-tame}.  
The controlled constant-like component is retained here; it is cancelled only by the future condition in \cref{thm:spectral-gap-basin}.  
For the moving-radius version, the unit interval change of variables sends $R_0e^{\mu(n+\theta)}$ to the fixed $y$-radius $R_0e^{\mu n}$.
Conically scaled cutoff derivatives have uniform bounds, so the same energy, difference and tame estimates are independent of $R_0$.
\end{proof}

The stable-manifold argument will project a localised nonlinearity onto the controlled spectral space, so we next quantify the resulting cutoff error.

\begin{lemma}[Controlled-mode localisation]
\label{lem:controlled-mode-localisation}
Let $P_{\rm c}$ be the finite Riesz projection in \cref{prop:finite-core-splitting}, let $R(s)=R_0e^{\mu s}$, and let $\chi_R(\zeta)=\chi(\zeta/R)$ equal one on $|\zeta|\le R$ and zero on $|\zeta|\ge2R$.  
Define
\[
 \mathfrak c_{k,R}(v;s)
 =\sum_{j=0}^k\sup_{|\zeta|\le2R(s)}
 \langle\zeta\rangle^{j-1}|\partial_\zeta^jv(\zeta,s)|.
\]
If
\[
 \sup_{s\ge0}e^{\eta s}\|v(s)\|_{H^k_2}
 +\left\{\int_0^\infty e^{2\eta s}
 \|v(s)\|_{H^{k+2}_2}^2\,\dd s\right\}^{1/2}
 +\sup_{s\ge0}\mathfrak c_{k,R}(v;s)\le M
\]
with $\omega/2<\eta<\omega$, then the cutoff nonlinear increment satisfies
\begin{equation}\label{eq:controlled-projected-quadratic}
 \|P_{\rm c}\Pi\{\chi_{R(s)}\mathcal N(v(s))\}\|_
 {L^2(n,n+1;H^k_2)}
 \le C M^2e^{-\sigma_N n},
 \qquad \sigma_N>\omega,
\end{equation}
after reducing $M$ if necessary.  
The same estimate holds for differences, with one factor $M$.
\end{lemma}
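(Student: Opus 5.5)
The plan is to split $P_{\rm c}\Pi\{\chi_R\mathcal N(v)\}$ into a near-core part, which is quadratic and decays like $e^{-2\eta n}$, and a far part, which is small because the controlled modes decay rapidly at the conical end. The key structural input is that $P_{\rm c}$ is a finite-rank operator on $H^k_2$. It can therefore be written as
\[
 P_{\rm c}w=\sum_{i=1}^N\langle w,\psi_i\rangle\phi_i .
\]
Here $\phi_i$ spans $E_{\rm c}$ and $\psi_i$ spans the kernel of the adjoint pencil for $\Re\lambda>-\omega$, with the pairing taken in the weighted $L^2_2$ inner product. Hence it suffices to bound the scalar pairings $\langle\Pi\chi_R\mathcal N(v),\psi_i\rangle$ in $L^2(n,n+1)$.

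The first step is to locate the adjoint modes. The $\psi_i$ solve $(\bar\lambda-\mathcal L^*)\psi=0$ (or the corresponding Jordan chain) with $\Re\lambda>-\omega$. In the logarithmic variable $y$, the adjoint of the outward transport $\mu(\tfrac32-m-\partial_y)$ is an inward transport, whose algebraic branch has a prescribed power. The three WKB branches of the adjoint equation, \eqref{eq:pincher-WKB-modes} with conjugate exponents, are either super-polynomially decaying or excluded by membership in the dual weighted space. I will use the end parametrix of \cref{app:terminal-fredholm} for the adjoint to show that the weighted dual functions satisfy $|\partial^j\psi_i(\zeta)|\langle\zeta\rangle^{-4}\lesssim\langle\zeta\rangle^{-\beta}$ for some $\beta$. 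That $\beta$ is as large as needed, because the admissible algebraic exponents of the adjoint are shifted by $\Re\lambda/\mu$ beyond the dual threshold. The $\Pi$ correction only adds $w(0)\langle DU,\psi_i\rangle$, which is controlled by the trace of $\chi_R\mathcal N(v)$ at $0$. That trace is local and quadratic.

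The second step splits the pairing at $|\zeta|=e^{an}$ with $0<a<\mu$. On $|\zeta|\le e^{an}$, write $\mathcal N(v)$ via \eqref{eq:defined-core-nonlinearity} as a Taylor remainder. The result is bounded pointwise by the product of two local derivative jets of $v$ up to order four, the trace terms times $\mathscr Dv$, and smooth rational coefficients of $U$. Estimating one factor in $H^{k+2}_2$ (as an $L^2$-in-time quantity) and the other in $H^k_2$ via weighted Sobolev embedding gives a contribution $CM^2e^{-2\eta n}$, up to a polynomial loss in $e^{an}$ from the weights. Since $\eta>\omega/2$, we have $2\eta>\omega$, so choosing $a$ small yields $\sigma_N$ with $\omega<\sigma_N<2\eta-O(a)$. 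On $e^{an}\le|\zeta|\le2R(s)$, I will not use decay of $v$. Instead, the cutoff $\mathfrak c_{k,R}$ bound gives $|\partial^j\mathcal N(v)|\lesssim M^2\langle\zeta\rangle^{-3-j}$; this follows because four derivatives of cone-sized perturbations gain $\zeta^{-4}$. Pairing with $\psi_i$ then gives $M^2e^{-(\beta+\dots)an}$, which is below $e^{-\sigma_Nn}$ once $\beta a>\sigma_N$. The difference estimate follows identically, since $\mathcal N(v)-\mathcal N(w)$ is bilinear in $(v,w)$ and $v-w$ through the same Taylor formula.

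The main obstacle is the first step. It requires a quantitative decay rate $\beta$ of the dual modes large enough relative to $a$ and $\omega$, and the Fredholm theory as stated gives only finiteness, not adjoint asymptotics. I would first try to obtain it by applying the adjoint end parametrix and a Levinson asymptotic-integration argument, like that of \cref{prop:finite-base-tail-chart}, to the adjoint eigenvalue ODE. If the algebraic branch only gives a fixed power, I would instead enlarge the core-split exponent $a$ toward $\mu$ and spend part of the $e^{-2\eta n}$ margin. Balancing the two regions then still leaves some $\sigma_N>\omega$.
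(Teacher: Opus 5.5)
Your overall plan matches the paper's proof. You write $P_{\rm c}$ via finitely many dual representatives in the weighted $L^2_2$ pairing and split at $|\zeta|=e^{an}$. The inner part gives $e^{-(2\eta-4a)n}$, the outer part is killed by decay of the dual modes, and $\Pi$ adds a harmless rank-one term.

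The gap is the step you yourself flag, and the mechanism you propose for it is wrong. No admissible algebraic adjoint branch has a power that can be made ``as large as needed''. The algebraic solution of the adjoint limiting equation is $\psi_{\rm a}\sim|\zeta|^{2+\overline\lambda/\mu}$; in $L^2(\dd y)$ this is $e^{(1/2+\overline\lambda/\mu)y}$, the inward transport you mention. Its weighted square $|\zeta|^{2\Re\lambda/\mu}$ is integrable only when $\Re\lambda<-\mu/2$. Every eigenvalue in $P_{\rm c}$ lies to the right of $-\omega>-\mu/2$, so this branch is \emph{excluded} from the dual space altogether. The adjoint version of \cref{lem:fredholm-end-dichotomy} then leaves only the decaying WKB branch, giving
$|\partial_\zeta^j\psi|\le C_je^{-c|\zeta|^{4/3}}$.
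For Jordan chains this follows inductively from the inhomogeneous dichotomy. The adjoint of the waist functional is only a distribution at $\zeta=0$, so it does not change the end alternatives. This stretched-exponential decay is the missing idea: it dominates every polynomial loss, so the outer region is negligible for any $a>0$.

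Your fallback would not close the argument if only a fixed power were available. The inner region forces $a<(2\eta-\omega)/4$, and this tends to zero as $\eta\downarrow\omega/2$. Enlarging $a$ toward $\mu$ therefore destroys the inner estimate. Meanwhile a fixed power $\beta$ gives outer decay only like $e^{-(2+\beta)an}$, which cannot beat $e^{-\omega n}$ uniformly over the range of $\eta$ in the statement. Had the algebraic branch been admissible, its weighted size $|\zeta|^{-2+\Re\lambda/\mu}$ would give only $\beta=2-\Re\lambda/\mu<5/2$.

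Your outer pointwise bound $|\partial^j\mathcal N(v)|\lesssim M^2\langle\zeta\rangle^{-3-j}$ is also false for the term $\{(\mathcal A_Uv)(0)+\mathcal R_F(v)(0)\}\mathscr Dv$ in \eqref{eq:defined-core-nonlinearity}. Under the $\mathfrak c_{k,R}$ bound, $\mathscr Dv=v-\zeta v'$ is only $O(M\langle\zeta\rangle)$ on the cone and has no time decay there; $v=\zeta\sin\log\zeta$ is an example. So this term grows linearly in the outer region, with only the $e^{-\eta s}$ decay of the trace factor. Again, only the super-polynomial decay of the dual modes absorbs it, together with the moving-cutoff losses.
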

\begin{proof}
Realise first the unnormalised operator on $\widetilde X^0=H^0_2(\R)_{\rm even}$ and pass to the invariant quotient $\widetilde X^0/\operatorname{span}\{DU\}$ described in \eqref{eq:fredholm-triangular-quotient}.  
The dual representatives of its Riesz projection satisfy the genuine weighted $L^2$ adjoint equation.
They have no algebraic admissible branch on either outward-transport end.  
More explicitly, the algebraic solution of the adjoint limiting equation in $H^0_2$ is
\[
 \psi_{\rm a}(\zeta)
 \sim |\zeta|^{2+\overline\lambda/\mu}.
\]
Its weighted square is asymptotic to $|\zeta|^{2\operatorname{Re}\lambda/\mu}\,\dd\zeta$, and is integrable only when $\operatorname{Re}\lambda<-\mu/2$.  
Every eigenvalue represented in $P_{\rm c}$ lies strictly to the right of that line, so this branch is excluded.
Since its range is finite-dimensional and consists of smooth generalised eigenvectors, the same dual representation makes $P_{\rm c}:H^{k-2}_2\to H^k_2$ bounded.
The end dichotomy in \cref{lem:fredholm-end-dichotomy}, applied to the adjoint system, therefore gives
\[
 |\partial_\zeta^j\psi(\zeta)|
 \le C_j e^{-c|\zeta|^{4/3}}
\]
for every adjoint eigenvector and generalised eigenvector occurring in $P_{\rm c}$.  
The same estimate for a Jordan chain follows inductively from the inhomogeneous adjoint dichotomy.  
In the zero-trace realisation the adjoint of the waist functional contributes only a distribution at $\zeta=0$; it therefore does not change either end alternative.  
Sobolev compatibility of the quotient resolvents, proved in \cref{prop:convective-maximal-regularity}, transfers these dual representatives to $X_k$.  
Choose
\[
 0<a<\min\left\{\mu,\frac{2\eta-\omega}{4}\right\}
\]
and split the nonlinear term at $|\zeta|=e^{an}$.  
On the inner part, conversion from $H^k_2$ to an unweighted local Sobolev norm costs at most $e^{2an}$.  
The local Moser estimate and the unit-interval graph norm thus give the rate $e^{-(2\eta-4a)n}$.  
On the outer part the displayed adjoint bound dominates every polynomial loss from the conical seminorm and from the moving cutoff.  
Hence \eqref{eq:controlled-projected-quadratic} holds with any $\omega<\sigma_N<2\eta-4a$.  
The polarised calculation proves the difference estimate.  
Applying $\Pi$ only adds the bounded rank-one term $-(\chi_R\mathcal N(v))(0)DU$, whose controlled projection satisfies the same quadratic bound.
\end{proof}

We finally complete the future-integral construction, adapting a Lyapunov--Perron argument to the convective graph norm and compactly supported mode selection.

\begin{theorem}[Forced strong-stable graph]\label{thm:spectral-gap-basin}
Let $k\ge8$, take the splitting in \cref{prop:finite-core-splitting}, and suppose that the normalised inner perturbation satisfies
\begin{equation}\label{eq:forced-pincher-equation}
 \mathfrak D_\mu v-\mathcal A_Uv+(\mathcal A_Uv)(0)DU
 =\Pi\{\chi_{R(s)}\mathcal N(v)\}+E_A(s),
\end{equation}
where $\mathcal N(0)=D\mathcal N(0)=0$ and
\[
 E_A(s)\in X_k,\qquad
 \|E_A(s)\|_{X_k}\le C_Ae^{-\sigma s}
 \qquad\hbox{for some }\sigma>\omega,
 \qquad
 \|E_A\|_{\mathbb H_{R,1}^k}<\infty.
\]
Augment the solution topology by the moving conical graph norm $\mathbb C_R^k$.
Let $P_{\rm c}$ and $P_{\rm s}$ denote the two spectral projections.  
For every
\[
 \frac\omega2<\eta<\omega
\]
if $C_A$ and the stable datum $\xi\in E_{\rm s}$ are sufficiently small, there is a unique small solution which decays like $e^{-\eta s}$ in the core norm and satisfies
\begin{align}
 v(s)={}&e^{s\mathcal L_{\rm s}}\xi
 +\int_0^s e^{(s-\tau)\mathcal L_{\rm s}}
 P_{\rm s}\{\Pi(\chi_{R(\tau)}\mathcal N(v))+E_A\}(\tau)\,\dd\tau
 \nonumber\\
 &-\int_s^\infty e^{(s-\tau)\mathcal L_{\rm c}}
 P_{\rm c}\{\Pi(\chi_{R(\tau)}\mathcal N(v))+E_A\}(\tau)\,\dd\tau.
 \label{eq:pincher-Lyapunov-Perron}
\end{align}
Its initial controlled component is the graph
\begin{equation}\label{eq:pincher-controlled-graph}
 P_{\rm c}v(\cdot,0)
 =-\int_0^\infty e^{-\tau\mathcal L_{\rm c}}
 P_{\rm c}\{\Pi(\chi_{R(\tau)}\mathcal N(v))+E_A\}(\tau)\,\dd\tau
 =:\Phi_A(\xi).
\end{equation}
If the full initial graph datum and the forcing are small in the moving conical topology, then the full summed trajectory remains uniformly small in  $\mathbb C_R^k$.  
This compatibility holds for the compactly supported selection in \cref{lem:compact-core-selection}, and decay holds in the discounted core and fixed local topologies.
\end{theorem}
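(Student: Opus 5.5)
We prove \cref{thm:spectral-gap-basin} by a Lyapunov--Perron contraction argument in an exponentially weighted space built from the linear estimates of \cref{prop:convective-maximal-regularity} and \cref{prop:terminal-core-semiflow}. We fix $\omega/2<\eta<\omega$ and introduce the solution space $\mathcal Z_\eta$ of functions $v$ on $\R_+\times\R$ with finite norm
\[
 \|v\|_{\mathcal Z_\eta}
 :=\sup_{s\ge0}e^{\eta s}\|v(s)\|_{\mathcal T_k}
 +\|e^{\eta s}v\|_{L^2(\R_+;H^{k+2}_2)}
 +\|e^{\eta s}\mathfrak D_\mu v\|_{L^2(\R_+;H^{k-2}_2)}
 +\|v\|_{\mathbb C^k_R}.
\]
The small ball in $\mathcal Z_\eta$ automatically controls $\sup_s\mathfrak c_{k,R}(v;s)$, by one-dimensional Sobolev embedding on the boxes $Q_\rho$ together with the weights $\langle\rho\rangle^{j-1}$. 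On this ball we define $\mathcal T(v)$ to be the unique solution of the linear problem \eqref{eq:convective-linear-straightened} with forcing
\[
 f=\Pi\{\chi_{R(s)}\mathcal N(v)\}+E_A,
\]
stable datum $P_{\rm s}v(0)=\xi$, and the future condition \eqref{eq:linear-stable-future-conditions}. By \cref{prop:convective-maximal-regularity} this solution is exactly the right side of \eqref{eq:pincher-Lyapunov-Perron}, and evaluating at $s=0$ gives \eqref{eq:pincher-controlled-graph}. A fixed point of $\mathcal T$ is therefore a solution of \eqref{eq:forced-pincher-equation} satisfying \eqref{eq:pincher-Lyapunov-Perron}. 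Conversely, any solution decaying at rate $\eta>\gamma_{\rm c}$ must satisfy the future condition, since otherwise its $E_{\rm c}$ component would not decay at that rate. This converse gives uniqueness in the class of solutions decaying like $e^{-\eta s}$.

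The key steps, in order, are as follows.

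\textbf{(1) Forcing bound.} We bound the forcing in the norms required by \eqref{eq:linear-convective-halfline-estimate} and \eqref{eq:conical-halfline-summable-estimate}.
\begin{itemize}
\item For the $E_A$ term, $\|E_A\|_{X_k}\le C_Ae^{-\sigma s}$ with $\sigma>\omega>\eta$ gives $e^{\eta s}E_A\in L^2X_k$, and $\|E_A\|_{\mathbb H^k_{R,1}}<\infty$ by hypothesis.
\item For the nonlinear term on the stable side, we use the tame quadratic structure $\mathcal N(0)=D\mathcal N(0)=0$ and the one-dimensional Moser estimates of \cref{prop:terminal-core-semiflow}, with the cutoff derivatives bounded uniformly in $R_0$. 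This gives
\[
 \|e^{\eta s}\Pi\chi_R\mathcal N(v)\|_{L^2X_k}\lesssim\|v\|_{\mathcal Z_\eta}^2.
\]
The factor $e^{\eta s}$ is absorbed because one factor of the product already decays like $e^{-\eta s}$ and the other factor is bounded in $\mathfrak c_{k,R}$.
\item On the controlled side we instead invoke \cref{lem:controlled-mode-localisation}, which gives decay $e^{-\sigma_N n}$ with $\sigma_N>\omega$. This rate exceeds both $\eta$ and $\gamma_{\rm c}$, so the backward integral $\int_s^\infty e^{(s-\tau)\mathcal L_{\rm c}}\cdots$ converges in finite dimensions and is $O(e^{-\eta s})$, even with the polynomial Jordan factors.
\end{itemize}

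\textbf{(2) Self-map and contraction.} Combining step (1) with the linear bounds, and using the difference versions of the same estimates (each carries one factor of $\|v\|+\|w\|$), we obtain
\[
 \|\mathcal T(v)\|\le C(\|\xi\|+C_A+\|v\|^2),
 \qquad
 \|\mathcal T v-\mathcal T w\|\le C(\|v\|+\|w\|)\|v-w\|.
\]
For $\xi$ and $C_A$ small, the ball of radius $2C(\|\xi\|+C_A)$ is mapped into itself and $\mathcal T$ is a contraction on it. The same contraction argument, applied with $\xi$ as a parameter, shows that $\Phi_A$ is Lipschitz, and $C^1$ by the uniform contraction principle.

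\textbf{(3) The moving conical norm.} We close the $\mathbb C^k_R$ component using \eqref{eq:conical-halfline-summable-estimate}. Its right side involves:
\begin{itemize}
\item the initial trace, which is small by assumption;
\item the core decay $\sup_s e^{\eta s}\|v\|_{H^k_2}$, already controlled by step (2);
\item the $\ell^1$ source norm $\|f\|_{\mathbb H^k_{R,1}}$.
\end{itemize}
For the nonlinear part of the source, quadraticity gives $\|\chi_R\mathcal N(v)\|_{\mathscr H^k_R(I_n)}\lesssim \mathfrak c_{k,R}(v)\cdot(\text{local norm of }v)$ on each box. The sum over $n$ is where the $\ell^1$ summability must come from.

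We expect step (3) to be the main obstacle. The slope channel is neutral: it does not decay along dilation characteristics, so we cannot extract a factor $e^{-\eta n}$ from it. We would first split each box at $|\zeta|=e^{an}$ as in \cref{lem:controlled-mode-localisation}.
\begin{itemize}
\item On the inner region, the factor $e^{-\eta n}$ from the core norm, at a weight cost of only $e^{2an}$, gives a summable series.
\item On the outer conical region, the cone-sized perturbation has $\langle\zeta\rangle^{j-1}$-weighted derivatives. The curvature nonlinearity there carries the extra decay $|\zeta|^{-4}\sim e^{-4an}$ inherited from the scaling of $\mathcal F$, which is the same $O(e^{-4an})$ used for the product $\prod(1+Ce^{-4an})$. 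This makes the outer contribution summable as well, with a bound quadratic in $\|v\|_{\mathcal Z_\eta}$.
\end{itemize}
With this in hand, the full trajectory stays small in $\mathbb C^k_R$, and the contraction from step (2) closes in the augmented norm. Decay in fixed local $C^j$ norms then follows from the core decay by local parabolic smoothing on unit boxes. Finally, for the compactly supported selection of \cref{lem:compact-core-selection}, compatibility of the initial graph datum is exactly the finiteness of its trace norm $\operatorname{tr}\mathscr G^k_{R_0}$.
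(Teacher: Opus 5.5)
Your proposal follows the paper's own route: a Lyapunov--Perron contraction in which the forward stable Duhamel term is controlled by the ordinary $e^{-\eta s}$ tame bound, the backward controlled integral converges because \cref{lem:controlled-mode-localisation} gives rate $\sigma_N>\omega$, and the moving conical norm $\mathbb C_R^k$ is kept small for the full summed trajectory (not for individual spectral summands) via the paper's conical estimates and the core/end splitting at $|\zeta|=e^{an}$. The only difference is that you run the fixed point in continuous time through \cref{prop:convective-maximal-regularity} and \eqref{eq:conical-halfline-summable-estimate}, while the paper iterates the time-one map with \eqref{eq:terminal-time-one-tame} and \eqref{eq:conical-parabolic-estimate}; the paper itself notes these are equivalent, and as in the theorem's final clause, the $\mathbb C_R^k$ part of your self-map bound also requires conical smallness of the initial trace and of $E_A$, not only of $\|\xi\|_{\mathcal T_k}+C_A$.
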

\begin{proof}
On $E_{\rm s}$ the forward semigroup decays at rate $\omega$, while on the finite-dimensional controlled space
\[
 \|e^{-t\mathcal L_{\rm c}}\|\le C e^{\omega t},
 \qquad t\ge0.
\]
The Duhamel identity is interpreted in the base space $X_k$; the convective estimate supplies its $H^k_2$ trace.  
The future integral in \eqref{eq:pincher-Lyapunov-Perron} therefore converges: the imposed forcing decays faster than $e^{-\omega s}$ and \cref{lem:controlled-mode-localisation} supplies the required rate $e^{-\sigma_Ns}$ for the controlled projection of the quadratic term.
The stable projection needs only the ordinary $e^{-\eta s}$ tame bound.
Apply the contraction on successive unit intervals using \cref{prop:terminal-core-semiflow}; equivalently, use the continuous graph norm associated with \eqref{eq:terminal-weighted-energy}.  
Estimate \eqref{eq:terminal-time-one-tame} supplies the quadratic Lipschitz constant, and the future integral cancels every controlled component.  
Apply the unit-interval estimate to the full sum, with the cutoff boundary moving along the dilation characteristic $R'=\mu R$, and use \eqref{eq:conical-parabolic-estimate} to keep $\mathbb C_R^k$ in a small ball.  
Individual spectral summands need not have this property.  
\end{proof}

Note that no decay in the moving conical component is required (or claimed) in the proof of Theorem \ref{thm:spectral-gap-basin}.
For later use, we select a basis of compactly supported modes.

\begin{lemma}[Compactly supported mode selection]
\label{lem:compact-core-selection}
Let
\[
 \mathcal Z
 =\left\{\psi\in C^\infty_c(\R):
 \psi\ \hbox{even},\ \psi(0)=0\right\}.
\]
$\mathcal Z$ is dense in $\mathcal X^k_2$, and
\[
 P_{\rm c}\mathcal Z=E_{\rm c}.
\]
Consequently there are $\psi_1,\ldots,\psi_N\in\mathcal Z$, $N=\dim E_{\rm c}$, for which $P_{\rm c}\psi_1,\ldots,P_{\rm c}\psi_N$ form a basis.  
If $g_A$ is the small initial matching error, the equations
\begin{equation}\label{eq:pincher-tuning-equations}
 P_{\rm c}\left(g_A+\sum_{j=1}^Na_j\psi_j\right)
 -\Phi_A\left(
 P_{\rm s}\left(g_A+\sum_{j=1}^Na_j\psi_j\right)
 \right)=0
\end{equation}
have a small solution $a=a(A)$ whenever the matching and strong-stable maps are sufficiently small and $C^1$.
\end{lemma}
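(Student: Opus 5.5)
The lemma has three parts: density of $\mathcal Z$, the surjectivity $P_{\rm c}\mathcal Z=E_{\rm c}$, and solvability of the tuning equations \eqref{eq:pincher-tuning-equations}. I would prove them in that order, since each feeds the next.

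\textbf{Density.} I would approximate $v\in\mathcal X^k_2$ by cutting off and mollifying. First put $\chi(\cdot/R)v$. The weight $\langle\zeta\rangle^{-4}$ has bounded logarithmic derivatives and $v\in H^k_2$, so $\chi(\cdot/R)v\to v$ in $H^k_2$ as $R\to\infty$. The derivatives falling on the cutoff carry factors $R^{-j}$ on the annulus $R\le|\zeta|\le2R$, where the weights are comparable, and dominated convergence handles the tails. Next mollify with an even mollifier; this preserves evenness and converges in $H^k$ on compact sets. The mollified function may have nonzero value at the origin. I would repair this by subtracting $\phi_\varepsilon(0)\theta$, where $\theta\in C^\infty_c$ is a fixed even bump with $\theta(0)=1$. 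Since $k>1/2$, the trace at zero is continuous on $H^k$, so $\phi_\varepsilon(0)\to v(0)=0$ and the correction vanishes in the limit.

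\textbf{Surjectivity.} This follows from density and finite-dimensionality. $P_{\rm c}$ is a bounded projection of $\mathcal X^k_2$ onto the finite-dimensional space $E_{\rm c}$ (\cref{prop:finite-core-splitting}). The set $P_{\rm c}\mathcal Z$ is a linear subspace of $E_{\rm c}$, and it is dense in $P_{\rm c}\mathcal X^k_2=E_{\rm c}$. A finite-dimensional subspace is closed, so $P_{\rm c}\mathcal Z=E_{\rm c}$. Any basis of $E_{\rm c}$ then has preimages $\psi_1,\ldots,\psi_N\in\mathcal Z$, which is the asserted basis.

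\textbf{Tuning equations.} Define
\[
 T(a)=P_{\rm c}\Bigl(g_A+\sum_j a_j\psi_j\Bigr)-\Phi_A\Bigl(P_{\rm s}\Bigl(g_A+\sum_j a_j\psi_j\Bigr)\Bigr).
\]
I would rewrite $T(a)=0$ as a fixed-point problem on $\R^N$ (or $\mathbb C^N$ restricted to real data). Let $M:\R^N\to E_{\rm c}$ be the isomorphism $a\mapsto\sum_j a_jP_{\rm c}\psi_j$. Then
\[
 a=-M^{-1}\Bigl[P_{\rm c}g_A-\Phi_A\Bigl(P_{\rm s}g_A+\sum_j a_jP_{\rm s}\psi_j\Bigr)\Bigr].
\]
By \cref{thm:spectral-gap-basin}, $\Phi_A$ is $C^1$ and $\Phi_A(\xi)$ is controlled by $C_A$ plus a quadratic term in $\xi$. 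The quadratic term comes from the $P_{\rm c}$-projected nonlinearity in \eqref{eq:pincher-controlled-graph}, using \cref{lem:controlled-mode-localisation}. So on a ball $|a|\le\rho$ the right side maps the ball into itself once $\|g_A\|$, $C_A$ and $\rho$ are small. Its Lipschitz constant is at most $\|M^{-1}\|\,\mathrm{Lip}(\Phi_A)\max_j\|P_{\rm s}\psi_j\|$, which is small because $D\Phi_A$ is small near zero. The contraction mapping theorem then gives a unique small $a(A)$. The implicit function theorem gives its $C^1$ dependence, since $DT(0)=M$ up to a small perturbation when $g_A=0$ and $C_A=0$.

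\textbf{Main obstacle.} The hard step is keeping the fixed point inside the domain where \cref{thm:spectral-gap-basin} applies. That domain requires smallness of the full datum in $\mathscr P^k$ and in the moving conical trace norm, not just in $\mathcal X^k_2$. The compact support of the $\psi_j$ is what makes this work: each $\psi_j$ has finite $\mathfrak c_k$ and finite conical trace norm. So $\sum_j a_j\psi_j$ is small in every relevant topology with norm $O(|a|)$, and smallness of $g_A$ in these norms is part of the hypothesis. I would therefore state the contraction estimates in the $\mathscr P^k\cap\operatorname{tr}\mathscr G^k_{R_0}$ norm. A subtle point is that the $\Phi_A$ estimates must be Lipschitz in $\xi$ with respect to this stronger norm; I expect this to follow from the difference estimates in \cref{thm:spectral-gap-basin} and \cref{lem:controlled-mode-localisation}.
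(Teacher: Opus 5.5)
Your proposal is correct and follows essentially the paper's proof: the same cutoff, even-mollification and bump-subtraction density argument, the same observation that a dense subspace of the finite-dimensional $E_{\rm c}$ is all of $E_{\rm c}$, and solvability of \eqref{eq:pincher-tuning-equations} from invertibility of $a\mapsto\sum_j a_jP_{\rm c}\psi_j$ together with smallness of $D\Phi_A$. The paper phrases this last step directly as the implicit function theorem at the limiting profile, rather than through your contraction on $\R^N$. Your topology discussion goes beyond what the lemma needs, because smallness and $C^1$ regularity of $\Phi_A$ are hypotheses of the lemma. One correction there: it is the full datum $g_A+\sum_ja_j\psi_j$ that is compactly supported, not its stable part $P_{\rm s}(\cdot)$, which carries the non-compactly supported tails of $P_{\rm c}\psi_j$. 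This matches the paper's remark that individual spectral summands need not be small in the conical topology.
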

\begin{proof}
Even cutoffs followed by even mollification approximate every $v\in\mathcal X^k_2$ in $H^k_2$.  
Since $k>1/2$, the approximating traces converge to $v(0)=0$.  
Subtracting the trace times a fixed even compactly supported bump which equals one at zero restores the zero-trace condition without changing the limit.  
Thus $\mathcal Z$ is dense.
The image $P_{\rm c}\mathcal Z$ is a dense linear subspace of the finite-dimensional space $E_{\rm c}$ and is therefore all of $E_{\rm c}$.
Choose a basis from this image.  
At the limiting profile the derivative of the left side of \eqref{eq:pincher-tuning-equations} with respect to $a$ is the invertible matrix with columns $P_{\rm c}\psi_j$.  
The implicit function theorem completes the proof.
\end{proof}

The variations in \cref{lem:compact-core-selection} have fixed compact support in similarity variables.  
For small $A$ their physical supports lie strictly inside the core, so they do not alter the matching annulus.
Sufficiently small coefficients preserve positivity and embeddedness.
This is why unknown additional unstable modes change the codimension of the construction but not its existence.

\section{Fixed-interface compactification}\label{sec:compact-new}

We now carry out the fixed-interface, volume-preserving construction required for surface diffusion.

\subsection{A moving terminal match}

Let $\tau=T-t$ and use the similarity relation
\begin{equation}\label{eq:terminal-tau-A}
 \tau=\frac{A^4}{4\mu}.
\end{equation}
Suppose for the moment that the profile has the differentiated expansion
\begin{equation}\label{eq:pincher-all-order-tail}
 U(\zeta)\sim\alpha\zeta+\sum_{n\ge1}c_n\zeta^{1-4n}.
\end{equation}
Then, at fixed $z>0$,
\begin{equation}\label{eq:inner-terminal-series}
 AU(z/A)
 \sim\alpha z+\sum_{n\ge1}(4\mu)^nc_n\tau^nz^{1-4n}.
\end{equation}
This is precisely the singular part of the backward terminal expansion of the outer cap.  
Indeed, write
\begin{equation}\label{eq:outer-terminal-series}
 r_{\rm out}(z,\tau)
 =r_0(z)+\sum_{n\ge1}\tau^nR_n(z),
 \qquad
 -\partial_\tau r_{\rm out}=\mathcal F[r_{\rm out}],
\end{equation}
where $r_0(z)=\alpha z$ near the apex.  
The coefficients are determined recursively by
\begin{equation}\label{eq:outer-terminal-recursion}
 R_{n+1}
 =-\frac1{n+1}[\tau^n]\,
 \mathcal F\left(\sum_{j=0}^n\tau^jR_j\right),
\end{equation}
and \eqref{eq:similarity-profile} gives
\[
 R_n(z)=(4\mu)^nc_nz^{1-4n}
\]
near zero.  
At first order this identity is exact already from \cref{prop:forced-conical-tail}:
\begin{align}
 4\mu b
 &=-\frac1{2\alpha(1+\alpha^2)},
 \label{eq:first-tail-outer-match}\\
 \mathcal F[\alpha z]
 &=\frac1{2\alpha(1+\alpha^2)}z^{-3},
 \qquad
 R_1=-\mathcal F[\alpha z]=4\mu b\,z^{-3}.
 \nonumber
\end{align}
Fix an $(\alpha,\delta)$-admissible terminal meridian $\Gamma_0$ in the sense of \cref{def:admissible-outer-cap}.  
Thus the conical germ is fixed on $|z|<4\delta$, while the outer arc is smooth, embedded, separated from the axis and has a positive tubular radius.  
For $A>0$, replace the apex by $AU(z/A)$ and interpolate with \eqref{eq:outer-terminal-series}.  
After reducing $A/\delta$, every resulting meridian is a smooth simple loop in $\{r>0\}$ and therefore generates an embedded torus.  
All constants below are uniform when $\Gamma_0$ ranges over a sufficiently small admissible $C^{k+8}$ neighbourhood: the perturbation is supported outside $|z|<4\delta$, and the distance-to-axis, tubular-radius and $C^{k+8}$ bounds
are kept uniform.

Denote the terminal rotational surface by $\Sigma_0$ and the patched smooth surface at scale $A$ by $\widetilde\Sigma_A$, with a fixed smooth choice of embedding $\widetilde X_A$ and unit normal $\widetilde\nu_A$.  
For a normal graph $X_A^u=\widetilde X_A+u\widetilde\nu_A$, let $\mathcal S_A^\sharp(u)$ denote its surface-diffusion normal speed, pulled back to the reference surface, and put
\[
 \mathcal T_A^\sharp(u)
 :=\langle\partial_AX_A^u,\nu[X_A^u]\rangle.
\]
Tangential terms are fixed by this reference gauge.  
Put $\beta_0(A)=-\mu A^{-3}$.  
In the fixed normal gauge define its invariance defect and signed volume error by
\begin{equation}\label{eq:matching-defect-definitions}
 \mathcal Q_A
 :=\beta_0(A)\mathcal T_A^\sharp(0)-\mathcal S_A^\sharp(0),
 \qquad
 \Delta V_A:=\Vol(\widetilde\Sigma_A)-V_0,
\end{equation}
where $V_0:=\Vol(\Sigma_0)$ is the enclosed volume of the terminal surface.
An outer normal field with nonzero volume derivative uniquely removes $\Delta V_A$; the resulting fixed-volume surface and embedding are denoted by $\overline\Sigma_A$ and $\overline X_A$.

\begin{proposition}[Fixed-interface matching]
\label{prop:fixed-interface-matching}
Fix $\delta>0$.  
Assume
\[
 U(\zeta)=\alpha\zeta+b\zeta^{-3}+c\zeta^{-7}
              +O(\zeta^{-11})
\]
with differentiated asymptotics.  
Match $AU(z/A)$ on $|z|\asymp\delta$ to the first outer terminal expansion $r_0+\tau R_1$, where $\tau=A^4/(4\mu)$ and $R_1=-\mathcal F[r_0]$.  
Then, for every fixed $j$,
\begin{align}
 \left|\partial_z^j(r_{\rm in}-r_{\rm out}^{(1)})\right|
   &\le C_jA^8\delta^{-7-j},
 \label{eq:fixed-match-geometric}\\
 \left|\partial_z^j\mathcal Q_A\right|
   &\le C_jA^4\delta^{-7-j},
 \label{eq:fixed-match-defect}\\
 \Delta V_A&=O(A^8\delta^{-5}),
 \label{eq:fixed-match-volume}\\
 \Area(\overline\Sigma_A)
   &=\Area(\Sigma_0)+\mathscr A_{\rm ren}[U]A^2
       +O(A^8\delta^{-6}).
 \label{eq:fixed-match-area}
\end{align}
A fixed outer volume variation has amplitude $O(A^8\delta^{-5})$ and physical velocity $O(A^4\delta^{-5})$.
After applying this variation, the surfaces $\overline\Sigma_A$ form a smooth fixed-volume family and the same defect estimates hold.

If $g_A=A^3\mathcal Q_A$ is written in the similarity variable, then for $m>3/2$
\begin{equation}\label{eq:fixed-match-weighted-source}
 \|g_A\|_{H^k_m}
 \le C(A/\delta)^{m+13/2}.
\end{equation}
In particular, for $m=2$ the source decays like $e^{-17\mu s/2}$.
\end{proposition}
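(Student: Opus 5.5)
The plan is to compare the inner and outer expansions term by term on the matching annulus $|z|\asymp\delta$, and then read off each of the four estimates from this comparison.

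\emph{Geometric mismatch.} In the annulus $\zeta=z/A\asymp\delta/A$ is large, so the differentiated tail expansion applies. Expanding gives
\[
 AU(z/A)=\alpha z+bA^4z^{-3}+cA^8z^{-7}+A\,O\bigl((z/A)^{-11}\bigr).
\]
By \eqref{eq:first-tail-outer-match} we have $bA^4=\tau R_1$ with $\tau=A^4/(4\mu)$. Hence the difference $r_{\rm in}-(r_0+\tau R_1)$ is $cA^8z^{-7}+O(A^{12}z^{-11})$, and each $z$-derivative costs one further power of $z^{-1}$. This proves \eqref{eq:fixed-match-geometric}. The patched surface $\widetilde\Sigma_A$ will use a fixed cutoff $\chi(z/\delta)$ to interpolate between the two graphs. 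All cutoff derivatives then fall on this $O(A^8\delta^{-7})$ difference, which keeps the interpolation error at the same order.

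\emph{Invariance defect.} We write $\mathcal Q_A$ region by region.
\begin{itemize}
\item In the pure inner region $\beta_0\mathcal T^\sharp_A$ is exactly the self-similar velocity. Since $U$ solves \eqref{eq:similarity-profile}, the defect vanishes identically there.
\item In the pure outer region the surface is $r_0$, which is static, so the defect reduces to $-\mathcal F[r_0]$ up to the gauge factor.
\end{itemize}
Here I would have to settle what the outer region actually is: either the static cap $r_0$ (the paper says the approximate outer cap is static), or $r_0+\tau R_1$ frozen in $A$. For the outer piece I would take $r_0$ together with the $\tau R_1$ correction placed inside the annulus. The defect of the frozen $r_0$ is $R_1=O(\delta^{-3})$ in physical units, which is not small. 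So the statement must mean that the outer defect is measured relative to the outer evolution, which is handled separately by the outer maximal-regularity theory. Only the annulus contribution is genuinely new. There the fourth-order operator $\mathcal F$ applied to the $A^8z^{-7}$ mismatch loses four derivatives, giving $A^8\delta^{-11}$. Multiplying by the ratio between $\beta_0=-\mu A^{-3}$ and the $A$-derivative of the interpolant, which scales like $A^3\cdot A^{-3}$ in time units ($\dd\tau/\dd A\sim A^3$), produces the stated $A^4\delta^{-7-j}$ after converting $\tau$-rates to $A$-rates. Getting these conventions consistent is the first point I would check carefully.

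\emph{Volume, area and weighted source.} For the volume, the difference between $\widetilde\Sigma_A$ and $\Sigma_0$ equals
\[
 2\pi\int\bigl(r_{\rm in}^2-r_0^2\bigr)\,\dd z
\]
over the core, plus the annulus correction. In similarity variables the core integral is $A^3\int(U^2-\alpha^2\zeta^2)\,\dd\zeta$ truncated at $\delta/A$. This vanishes in the limit by the volume neutrality \eqref{eq:pincher-volume-neutrality}. The truncated tail $\int_{\delta/A}^\infty$ is of order $A^3\cdot A^4\delta^{-1}\cdot\ldots$, and this must be cancelled exactly by the $\tau R_1$ outer term, because $R_1$'s volume contribution is the outer flux $J$ evaluated at the annulus, that is, the integrated identity \eqref{eq:pincher-integrated-flux}. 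What remains is the $c$-term, integrated as $\int A^8z^{-7}\cdot z\,\dd z\sim A^8\delta^{-5}$, which gives \eqref{eq:fixed-match-volume}. The area estimate \eqref{eq:fixed-match-area} follows the same route, with \eqref{eq:pincher-relative-area} supplying $A^2\mathscr A_{\rm ren}$; the volume correction of size $A^8\delta^{-5}$ changes the area by $O(A^8\delta^{-6})$. Finally, for \eqref{eq:fixed-match-weighted-source} I would rescale: $g_A$ is supported where $|\zeta|\gtrsim\delta/A$ with pointwise size $A^3\cdot A^4\delta^{-7}\sim(A/\delta)^7\cdot(\ldots)$. Integrating against $\langle\zeta\rangle^{-2m}$ over a $\zeta$-interval of length $\sim\delta/A$ near $\zeta\sim\delta/A$ gives
\[
 (A/\delta)^{m-1/2}\cdot(A/\delta)^{7}=(A/\delta)^{m+13/2}.
\]
With $A\sim e^{-\mu s}$ and $m=2$ this yields $e^{-17\mu s/2}$.

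I expect the main obstacle to be the volume step. The exact cancellation of the $O(A^4)$ and $O(A^7)$ volume contributions against the outer $\tau R_1$ term requires using both volume neutrality and the flux identity on a truncated domain, with careful bookkeeping of cutoff error terms.
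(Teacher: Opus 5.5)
\textbf{Invariance defect.} Your geometric mismatch, the outline of your volume bookkeeping and your weighted source bound follow the paper. The step that fails is the invariance defect \eqref{eq:fixed-match-defect}. You correctly find that the fourth-order operator acting on the mismatch $cA^8z^{-7}$ contributes $O(A^8\delta^{-11})$. You then try to turn this into $A^4\delta^{-7}$ with a factor you yourself say scales like $A^3\cdot A^{-3}$, and no such conversion exists. The $A^4\delta^{-7-j}$ comes from the other half of $\mathcal Q_A$, namely $\beta_0\mathcal T_A^\sharp$, which in radial form is $\beta_0\partial_A r$. On the mismatch this gives $\beta_0\partial_A(cA^8z^{-7})=-\mu A^{-3}\cdot 8cA^7z^{-7}=-8\mu cA^4z^{-7}$. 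The spatial fourth-order part is smaller by the factor $(A/\delta)^4$. This is what the paper means by differentiating the first omitted term along $A_t=-\mu A^{-3}$.

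\textbf{Outer region.} The missing idea is in how you treat the outer region. The paper uses $r_0+\tau R_1$ with $\tau=A^4/(4\mu)$ on the whole outer branch, and this piece is neither static nor frozen in $A$.

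Since $\beta_0\partial_A\tau=-1$, one has $\beta_0\partial_A(r_0+\tau R_1)=-R_1=\mathcal F[r_0]$. So the $O(\delta^{-3})$ speed you worried about cancels identically. The outer defect is then $\mathcal F[r_0]-\mathcal F[r_0+\tau R_1]=-\tau D\mathcal F[r_0]R_1+O(\tau^2)$. This is $O(A^4)$, and of size $A^4z^{-7}$ on the collar by homogeneity of the cone linearisation. The estimate therefore holds as stated, and reinterpreting it ``relative to the outer evolution'' is neither needed nor a proof.

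\textbf{Volume cancellation.} Your choice of a static cap, or of $\tau R_1$ placed only inside the annulus, also breaks the volume cancellation you rely on. The paper writes $r_0R_1=-\partial_z\Phi_0$ with $\Phi_0=r_0(1+r_0'^2)^{-1/2}(H_{r_0})_z$. It integrates this over the entire outer branch, with zero flux at the far endpoint, to get $2\tau\int_\delta^{z_{\rm cap}}r_0R_1\,\dd z=2\tau\Phi_0(\delta)=2\alpha bA^4/\delta$, using $\Phi_0(\delta)=-1/(2(1+\alpha^2)\delta)=4\mu\alpha b/\delta$. If $\tau R_1$ is cut off inside the annulus, the boundary term is no longer $\Phi_0(\delta)$ and an $A^4/\delta$ error survives.

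Note also that the inner core contributes $-A^3\int_{\delta/A}^\infty(U^2-\alpha^2\zeta^2)\,\dd\zeta=-2\alpha bA^4/\delta$, not a term of order $A^7$. Only at this order does it cancel against the outer flux term, and the paper checks that cancellation explicitly rather than asserting it.
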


\begin{proof}
The inverse-cubic terms agree because $R_1(z)=4\mu b z^{-3}$.  
Thus the first omitted physical term is $cA^8z^{-7}$, which gives \eqref{eq:fixed-match-geometric}.
Differentiating it along $A_t=-\mu A^{-3}$ gives \eqref{eq:fixed-match-defect}; its fourth-order spatial evolution is smaller by $(A/\delta)^4$.  
Relative-volume neutrality removes the $A^3$ term.  
To see the cancellation at order $A^4$ globally, write
\[
 \Phi_0=\frac{r_0}{(1+r_0'^2)^{1/2}}(H_{r_0})_z.
\]
The local volume-flux identity gives $r_0R_1=-\partial_z\Phi_0$.  
On the conical side of the annulus,
\[
 \Phi_0(\delta)=-\frac1{2(1+\alpha^2)\delta}
 =\frac{4\mu\alpha b}{\delta},
\]
while the flux vanishes at the reflected outer endpoint.  
Consequently the one-sided outer contribution to the $r^2\,\dd z$ moment is
\[
 2\tau\int_\delta^{z_{\rm cap}}r_0R_1\,\dd z
 =\frac{2\alpha b}{\delta}A^4,
\]
where $z_{\rm cap}$ is the reflected endpoint of the chosen graphical outer branch.  
This cancels the leading inner volume contribution $-2\alpha bA^4/\delta$, obtained from \eqref{eq:pincher-volume-neutrality} and the conical tail expansion.
Integration of the first unmatched tail now gives \eqref{eq:fixed-match-volume}; the relative-area identity gives \eqref{eq:fixed-match-area}.

Put $R=\delta/A$.  
In similarity variables $|\partial_\zeta^jg_A|\le C_jR^{-7-j}$ for $|\zeta|\gtrsim R$.  
Hence
\[
 \|g_A\|_{H^k_m}^2
 \le C\sum_{j=0}^k\int_R^\infty
       \zeta^{-2(7+j)-2m}\,\dd\zeta
 \le CR^{-2m-13},
\]
which proves \eqref{eq:fixed-match-weighted-source}.
\end{proof}

\subsection{The fixed-interface half-line section}

Let $\overline\Sigma_A$, $0<A\le A_0$, be the fixed-volume family in \cref{prop:fixed-interface-matching}, put $\beta_0(A)=-\mu A^{-3}$, and choose embeddings $\overline X_A$ and normals $\overline\nu_A$.  
For $X_A^u=\overline X_A+u_A\overline\nu_A$, retain the normal-speed and $A$-variation notation $\mathcal S_A^\sharp(u)$ and $\mathcal T_A^\sharp(u)$ introduced above.  
The normal-graph invariance equation is
\begin{equation}\label{eq:compact-invariance-equation}
 \mathcal S_A^\sharp(u_A)
 =\beta(A)\mathcal T_A^\sharp(u_A).
\end{equation}
Use the half-line time
\begin{equation}\label{eq:compact-half-line-time}
 s=\frac1\mu\log\frac{A_0}{A},
 \qquad A=A_0e^{-\mu s}.
\end{equation}
Write
\[
 \vartheta(s)=A(s)^3\{\beta(s)-\beta_0(s)\}.
\]
The actual physical clock obeys
\begin{equation}\label{eq:actual-clock-ratio}
 \frac{\dd t}{\dd s}
 =\frac{A(s)^4}{1-\vartheta(s)/\mu}.
\end{equation}
For the linear splitting use instead the fixed base clock
\begin{equation}\label{eq:outer-base-clock}
 t_*(s)=t_0+\int_0^sA(\sigma)^4\,\dd\sigma
 =t_0+\frac{A_0^4}{4\mu}(1-e^{-4\mu s}),
 \qquad
 T_*=t_0+\frac{A_0^4}{4\mu}.
\end{equation}
The fixed physical interface corresponds to the moving similarity radius
\begin{equation}\label{eq:fixed-interface-radius}
 R(s)=\frac{\delta}{A(s)}
 =\frac{\delta}{A_0}e^{\mu s}.
\end{equation}
The clock ratio in \eqref{eq:actual-clock-ratio} is retained in the speed block.  
The cap lies on a fixed smooth physical domain, and
\begin{equation}\label{eq:outer-volterra-integral}
 \int_0^\infty(A/\delta)^4\,\dd s
 =\frac{(A_0/\delta)^4}{4\mu}.
\end{equation}

To avoid artificial fourth-order boundary conditions, fix a smooth closed double $M_o$ of the physical cap $M_o^+$.  
Choose bounded maps
\[
 \operatorname{Res}_o:C^\infty(M_o)\to C^\infty(M_o^+),
 \qquad
 \operatorname{Ext}_o:C^\infty(M_o^+)\to C^\infty(M_o),
 \qquad
 \operatorname{Res}_o\operatorname{Ext}_o=I,
\]
and put $\Gamma_o=I-\operatorname{Ext}_o\operatorname{Res}_o$.  
Extend the physical cap linearisation smoothly to a uniformly fourth-order parabolic operator $\widehat G_*$ on $M_o$.  
The equation on the artificial half will be $\Gamma_o(\partial_{t_*}-\widehat G_*)w=0$; hence the doubling introduces no uncontrolled `ghost' degrees of freedom.

\begin{lemma}[Uniform outer-cap interval]\label{lem:outer-cap-time}
Fix an admissible cap $\Gamma_0$ and $k\ge10$.  
There are constants $\rho_{\rm out},\tau_{\rm out}>0$ and a sufficiently small admissible $C^{k+8}$ neighbourhood $\mathcal U$ of $\Gamma_0$ such that the extended outer normal-graph equation on $M_o$ is uniformly fourth-order parabolic and admits a standard maximal-regularity solution on every interval of physical length at most $\tau_{\rm out}$, for initial graphs and sources in the corresponding $\rho_{\rm out}$ ball.  
The solution remains inside the fixed normal tube, and the constants are uniform for caps in $\mathcal U$.

In the compactification we impose 
\begin{equation}\label{eq:outer-cap-time-condition}
 \frac{A_0^4}{2\mu}<\tau_{\rm out}.
\end{equation}
After the matching and trace data are reduced as below, the outer component then remains smooth and inside this tube for the whole interval $[t_0,T_*)$.  
In particular, no singularity can form on the outer cap before the inner waist reaches the axis.
\end{lemma}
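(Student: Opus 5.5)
The plan is to treat the extended outer problem on the closed double $M_o$ as a standard quasilinear fourth-order parabolic equation on a fixed compact manifold. Then I would read off a uniform existence time from the maximal-regularity theory of Escher--Mayer--Simonett, i.e.\ \cref{thm:local-semiflow} and its linear core. I would proceed in three steps.

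\emph{Step 1: uniform ellipticity of the extension.} On the physical cap $M_o^+$, which lies outside $|z|<2\delta$, \cref{def:admissible-outer-cap}(iii) gives a positive distance to the axis and a positive normal injectivity radius. Hence the normal-graph surface diffusion operator $\mcG$ is a smooth quasilinear operator with principal symbol $-\frac12|\xi|^4_g$ (in the averaged-$H$ convention) for all graphs in a $C^{k+8}$ ball whose radius is below the tubular radius. Under an admissible perturbation supported outside $|z|<4\delta$, the distance-to-axis, tubular-radius and $C^{k+8}$ bounds vary continuously. Shrinking to a neighbourhood $\mathcal U$ therefore gives uniform constants. On the artificial half I would take $\widehat G_*$ to be a fixed smooth interpolation of the principal coefficient with $-\Delta^2$, using a partition of unity subordinate to the doubling collar. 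Ellipticity is convex in the principal symbol, so the extension is uniformly parabolic. The equation $\Gamma_o(\partial_{t_*}-\widehat G_*)w=0$ on the ghost half, combined with the true equation on $M_o^+$, defines a single quasilinear parabolic system on $M_o$ whose restriction to $M_o^+$ is unaffected, because $\operatorname{Res}_o\operatorname{Ext}_o=I$.

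\emph{Step 2: uniform local existence.} I would linearise at the reference graph $u=0$ and apply the little-H\"older maximal regularity of Da Prato--Grisvard on the compact manifold $M_o$. The linear constants depend only on finitely many coefficient norms and on the ellipticity constant, so by Step 1 they are uniform over $\mathcal U$. I would then run the standard contraction for the quasilinear problem on $[0,\tau]$ in $C([0,\tau];h^{4+\alpha})\cap C^1([0,\tau];h^{\alpha})$, with initial graph and source in a $\rho_{\rm out}$ ball. Since the Lipschitz constant of the nonlinearity is controlled by the ball radius, there is a $\tau_{\rm out}$, uniform in the data and the cap, for which the map contracts. The a priori estimate keeps $\|u\|_{C^1}$ below half the tubular radius, so the solution stays in the fixed normal tube. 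Higher regularity follows by parabolic bootstrapping, as in \cref{thm:local-semiflow}.

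\emph{Step 3: covering $[t_0,T_*)$.} By \eqref{eq:outer-base-clock}, $T_*-t_0=A_0^4/(4\mu)$. The actual clock \eqref{eq:actual-clock-ratio} differs from the base clock by the factor $(1-\vartheta/\mu)^{-1}$, which is at most $2$ once $\vartheta$ is small. Hence the physical remaining time is at most $A_0^4/(2\mu)<\tau_{\rm out}$ by \eqref{eq:outer-cap-time-condition}, and the whole interval lies inside one uniform existence window. The matching forcing on the cap is $O(A^4\delta^{-5})$ by \cref{prop:fixed-interface-matching}, and the volume correction has the same size. For $A_0$ small these lie in the $\rho_{\rm out}$ ball, so the outer component stays smooth and in the tube, and no outer singularity forms before $T_*$.

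The main obstacle is making Step 1 genuinely uniform at the interface. The coupling data entering $M_o^+$ near $|z|\asymp\delta$ come from the inner solution, which is only controlled in the conical graph norm $\mathbb C^k_R$. I would handle this by treating the interface contribution as a source term in the maximal-regularity estimate. Its size is bounded using \eqref{eq:fixed-match-defect} together with the uniform $\mathbb C^k_R$ smallness from \cref{thm:spectral-gap-basin}.
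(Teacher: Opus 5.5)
Your proposal follows essentially the paper's route: uniform strong ellipticity of the extended operator comes from the tubular-radius, axis-distance and $C^{k+8}$ bounds of \cref{def:admissible-outer-cap}. Quasilinear fourth-order maximal regularity on the closed double $M_o$ then gives a local existence time, made uniform by shrinking $\mathcal U$, the graph ball and the time interval, and the factor-two comparison of actual and base clocks together with \eqref{eq:outer-cap-time-condition} covers $[t_0,T_*)$. The one adjustment is the source of the smallness of the outer block, its initial trace and $\vartheta$: take it from the coupled fixed point of \cref{thm:terminal-mixed-inverse}, as the paper does, not from the inner-only \cref{thm:spectral-gap-basin}, because the inner forcing itself depends on $w$ through the collar.
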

\begin{proof}
The positive tubular radius and the $C^{k+8}$ bounds in \cref{def:admissible-outer-cap} give a fixed $C^1$ normal-graph ball on which the principal symbol of the extended outer operator is uniformly strongly elliptic and all coefficient maps have uniform tame bounds.  
Quasilinear fourth-order maximal regularity on the closed manifold $M_o$ therefore gives a positive local existence time and continuous dependence on the cap, initial trace and source.  
Shrinking the cap neighbourhood, the graph ball and the time interval if necessary makes these constants uniform and keeps the solution in the normal tube.  
The base-clock interval in \eqref{eq:outer-base-clock} has length $A_0^4/(4\mu)$, while the actual clock is bounded by twice this length once $\|\vartheta\|_{L^\infty}<\mu/2$.  
Condition \eqref{eq:outer-cap-time-condition}, together with the small residual and free-trace bounds in \cref{thm:terminal-mixed-inverse}, places the entire outer block inside the preceding local theory.  
Since the reconstructed global surface agrees with that block on the outer region, an earlier outer singularity is excluded.
\end{proof}

The outer evolution is therefore harmless on the required time interval.
We next isolate the single outer direction used to impose the conserved volume.

\begin{lemma}[Uniform fixed-volume chart]\label{lem:uniform-volume-chart}
There is an even smooth outer normal field $\phi\overline\nu_A$, supported away from the matching annulus, and there are constants $r_{\rm vol},C>0$, independent of $0<A\le A_0$, with the following property.  
If $h$ is a symmetric normal graph with $\|h\|_{C^1}<r_{\rm vol}$, there is a unique scalar $\Theta_A(h)$ such that
\begin{equation}\label{eq:uniform-volume-chart}
 \Vol\left(\overline X_A+\{h+\Theta_A(h)\phi\}\overline\nu_A\right)
 =\Vol(\overline\Sigma_A).
\end{equation}
The maps are jointly $C^2$ in $(-\log A,h)$, $\Theta_A(0)=0$, and
\begin{equation}\label{eq:uniform-volume-chart-bound}
 |\Theta_A(h)|\le C\|h\|_{C^1},
 \qquad
 \|(A\partial_A)^jD_h^\ell\Theta_A(h)\|\le C,
 \quad 1\le j+\ell\le2,
\end{equation}
on that ball.  
If $h$ is supported in the inner core, the added variation does not change it.
\end{lemma}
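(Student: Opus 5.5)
The plan is to apply the implicit function theorem to the scalar map
\[
 \mathcal V_A(h,\theta):=\Vol\bigl(\overline X_A+\{h+\theta\phi\}\overline\nu_A\bigr)-\Vol(\overline\Sigma_A),
\]
uniformly in $A$. The key point is that the normal field $\phi$ is supported on the outer cap, and the cap geometry does not depend on $A$ there.

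\textbf{Choice of $\phi$.} We fix an even smooth function $\phi\ge0$ supported in a compact subset of the outer arc of $\Gamma_0$. Its support should lie at distance at least $2\delta$ from the axis side and stay away from the matching annulus $|z|\asymp\delta$. We normalise it so that $2\pi\int\phi\,r\,\dd s=1$. By \cref{prop:fixed-interface-matching}, the fixed-volume correction and the interpolation are either supported in $|z|\lesssim\delta$ or are the fixed outer volume variation. So on $\operatorname{supp}\phi$ the embedding $\overline X_A$ equals the reference cap up to a change of amplitude $O(A^8\delta^{-5})$, which is smooth in $A$. In particular, $\partial_\theta\mathcal V_A(0,0)=\int\phi\,\dd\mu_A$ equals $1+O(A^8)$ and is bounded below uniformly for $0<A\le A_0$.

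\textbf{Volume expansion.} For normal graphs we use the exact formula
\[
 \Vol(X+w\nu)-\Vol(X)=\int_\Sigma\bigl(w-Hw^2+\tfrac13K_Gw^3\bigr)\,\dd\mu
\]
(with the sign conventions of \cref{lem:area-volume}). Since $K_G\,\dd\mu$ and $H\,\dd\mu$ are pointwise integrable, this shows that $\mathcal V_A$ is a polynomial in $(h,\theta)$. We then need uniform $C^1$ control of $h$ against these integrals. The catch is that the curvature $H$ of $\overline\Sigma_A$ is of size $A^{-1}$ on the core, while the core area is only $O(A^2)$. The product $|H|\,\dd\mu$ on the core is therefore $O(A)$, and $K_G\,\dd\mu$ is uniformly integrable by Gauss--Bonnet-type bounds on the profile. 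This gives $|\mathcal V_A(h,0)|\le C\|h\|_{C^1}$, uniformly in $A$. The restriction $\|h\|_{C^1}<r_{\rm vol}$ keeps the graph inside the tube.

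\textbf{Implicit function theorem and derivative bounds.} The quantitative implicit function theorem, with constants uniform in $A$, now yields a unique small $\Theta_A(h)$ together with the bound $|\Theta_A(h)|\le C\|h\|_{C^1}$; $\Theta_A(0)=0$ because $\mathcal V_A(0,0)=0$. Derivatives in $h$ follow by differentiating the identity. For $A\partial_A$, we note that $\partial_A\overline X_A$ on the core is the dilation field $O(1)$ times the profile. Applying $A\partial_A$ to $\int h\,\dd\mu_A$ and to the curvature terms scale-invariantly produces the same uniform bounds. Here we use that $\zeta\partial_\zeta$ applied to $U$ is controlled by the differentiated conical expansion of \cref{thm:certified-positive-conical-profile}. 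Joint $C^2$ regularity in $(-\log A,h)$ then follows.

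\textbf{Core-supported $h$.} If $h$ is supported in the inner core, then $\phi h=0$. The added term $\Theta_A(h)\phi$ therefore leaves the core untouched.

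The main obstacle is the uniformity of the $A\partial_A$ derivatives on the degenerating core. I would handle it by rescaling the core contribution to similarity variables, where it is a fixed smooth functional of $U$ multiplied by explicit powers of $A$ that are bounded.
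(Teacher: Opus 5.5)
Your proposal is correct and follows essentially the paper's route. Both apply the implicit function theorem to the scalar volume equation with $\phi$ on a fixed compact piece of the outer cap, take the uniform lower bound $\partial_\theta\Vol\approx\int\phi\,\dd\mu_{\Sigma_0}\neq0$ from smooth convergence of the caps, and obtain \eqref{eq:uniform-volume-chart-bound} by differentiating the implicit equation; your exact Steiner polynomial and rescaled core integrals supply the uniformity on the degenerating core that the paper leaves implicit.

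There are two harmless slips. On $\operatorname{supp}\phi$ the approximate cap differs from $\Gamma_0$ by the first terminal correction $\tau R_1=O(A^4)$, not $O(A^8\delta^{-5})$. A fixed $C^1$-ball also does not keep the graph inside the normal tube near the waist, whose radius is comparable to $A+|z|$. Neither matters, because $\Vol(X+w\nu)-\Vol(X)=\int(w-Hw^2+\tfrac13K_Gw^3)\,\dd\mu$ holds exactly for the algebraic volume of any such graph.
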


\begin{proof}
Choose $\phi$ on a fixed compact subset of the outer cap so that $\int\phi\,\dd\mu_{\Sigma_0}\ne0$.  
The approximate caps converge smoothly there as $A\downarrow0$, and therefore
\[
 \left|\partial_\theta\big|_{\theta=0}
 \Vol\{\overline X_A+(h+\theta\phi)\overline\nu_A\}\right|
 \ge c_0>0
\]
after reducing $A_0$ and $r_{\rm vol}$.  
The volume functional is smooth in a $C^1$ normal-graph neighbourhood and jointly smooth in $-\log A$ on this fixed outer set.  
Differentiating its implicit equation once and twice in $A\partial_A$ and in $h$, and dividing by the displayed uniformly nonzero derivative, gives \eqref{eq:uniform-volume-chart}--\eqref{eq:uniform-volume-chart-bound}.
The support assertion is immediate.
\end{proof}

To couple this outer normal-graph description to the radial variables of the inner problem, we need a uniform change of gauge on the core.

\begin{lemma}[Radial--normal gauge conjugacy]\label{lem:radial-normal-gauge}
On the graphical core of $\overline\Sigma_A$, write the meridian as $\overline\gamma_A(z)=(z,\overline r_A(z))$.  
There are neighbourhoods of zero, uniform after measuring length in units of $A+|z|$, and smooth maps
\[
 q\longmapsto\bigl(\psi_A(q),\mathfrak N_A(q)\bigr)
\]
such that
\begin{equation}\label{eq:radial-normal-exact-chart}
 \bigl(z+\psi_A(q)(z),
       \overline r_A(z+\psi_A(q)(z))+q(z+\psi_A(q)(z))\bigr)
 =\overline\gamma_A(z)+\mathfrak N_A(q)(z)\overline\nu_A(z).
\end{equation}
At $q=0$,
\begin{equation}\label{eq:radial-normal-linearisation}
 D\mathfrak N_A(0)q
 =q\langle e_r,\overline\nu_A\rangle,
 \qquad
 D\psi_A(0)q
 =-q\frac{\overline r_A'}{1+\overline r_A'^2}.
\end{equation}
Since $\langle e_r,\overline\nu_A\rangle$ is uniformly separated from zero on the graphical core, sufficiently small relative $C^1$ graphs give a one-to-one pointwise chart with inverse, preserving smoothness, and the chart and its inverse have uniform geometric $C^1$ bounds.  
For any such small $u$, write $q=\mathfrak N_A^{-1}(u)$.  
If $F$ is a scalar normal-speed residual on the left side of \eqref{eq:radial-normal-exact-chart}, define
\begin{equation}\label{eq:normal-radial-residual-map}
 \mathfrak R_A[u]F
 :=\left(\frac{F}{\langle\nu[X_A^u],e_r\rangle}\right)
   \circ\bigl(I+\psi_A(q)\bigr)^{-1}.
\end{equation}
Then
\begin{equation}\label{eq:normal-radial-speed-identity}
 \mathfrak R_A[u]
 \{\beta\mathcal T_A^\sharp(u)-\mathcal S_A^\sharp(u)\}
 =\beta\partial_A(\overline r_A+q)-\mathcal F[\overline r_A+q]
\end{equation}
in the fixed radial coordinate.  
\end{lemma}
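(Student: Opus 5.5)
The plan has three stages: solve the chart identity \eqref{eq:radial-normal-exact-chart} pointwise by the implicit function theorem in scale-invariant units, read off the linearisation \eqref{eq:radial-normal-linearisation}, and then derive \eqref{eq:normal-radial-speed-identity} by comparing normal velocities of one and the same family of surfaces.

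\emph{The chart.} Fix $z$ on the graphical core and put $\ell=A+|z|$. Rescale by $\ell$, so that $\hat r=\overline r_A/\ell$ has $C^j$ norms bounded uniformly in $A$ and $z$. This holds because on the core $\overline r_A$ is either $AU(z/A)$ or the cone $\alpha|z|$ up to the small matching corrections of \cref{prop:fixed-interface-matching}.

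Consider the map $(\psi,n)\mapsto(z+\psi,\overline r_A(z+\psi)+q(z+\psi))-\overline\gamma_A(z)-n\overline\nu_A(z)$. Its derivative in $(\psi,n)$ at $q=0$, $(\psi,n)=(0,0)$ sends $(\psi,n)$ to $\psi\,\overline\gamma_A'(z)-n\overline\nu_A(z)$. This map is invertible, since tangent and normal are orthonormal up to the factor $|\overline\gamma_A'|$.

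The implicit function theorem, applied in relative $C^1$ norm $\|q\|_{C^1_{\rm rel}}=\sup(\ell^{-1}|q|+|q'|)$, then gives unique small $(\psi_A(q)(z),\mathfrak N_A(q)(z))$. The radius of the neighbourhood is uniform because all constants are scale invariant. Smoothness in $q$ and in $z$ follows from smoothness of the defining map.

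\emph{The linearisation.} Differentiate the identity at $q=0$:
\[
 D\psi\,(1,\overline r_A')+(0,q)=D\mathfrak N\,\overline\nu_A .
\]
Taking the inner product with $\overline\nu_A$ gives $D\mathfrak N\,q=q\langle e_r,\overline\nu_A\rangle$. Taking the inner product with the tangent $(1,\overline r_A')$ gives $D\psi(1+\overline r_A'^2)+q\overline r_A'=0$, which is \eqref{eq:radial-normal-linearisation}.

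By \eqref{eq:axisymmetric-normal}, $\langle e_r,\overline\nu_A\rangle=\mp(1+\overline r_A'^2)^{-1/2}$. This is bounded away from zero since $\overline r_A'$ is uniformly bounded on the core (the slope approaches $\alpha$). Hence $D\mathfrak N_A(0)$ is a pointwise isomorphism, and the inverse function theorem in the same relative topology gives the inverse chart $u\mapsto q$ with uniform $C^1$ bounds. Invertibility of $z\mapsto z+\psi_A(q)(z)$ follows because $|\psi'|$ is small.

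\emph{The speed identity.} Both sides of \eqref{eq:radial-normal-exact-chart} parametrise the same surface of revolution, differing only by a tangential reparametrisation. Normal velocity is gauge independent, so for a one-parameter family in $A$ (with $q$ and $u$ depending on $A$) the normal component of the $A$-derivative agrees. Here I take $\beta\partial_A$ and $\mathcal T^\sharp$ to include the $A$-dependence of $u$ along the family.

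Consider first the left side of \eqref{eq:radial-normal-exact-chart}, a radial graph $r=\overline r_A+q$ at fixed $z$. Its $A$-derivative is $\partial_A(\overline r_A+q)e_r$, so its normal component is $\langle e_r,\nu\rangle\,\partial_A(\overline r_A+q)$, evaluated at the shifted point $z+\psi$. For the right side, $\mathcal T_A^\sharp(u)$ is by definition the normal velocity in the normal gauge. Likewise, the surface diffusion speed $\mathcal S^\sharp$ is a geometric scalar. By \eqref{eq:profile-equation} the radial speed equals $\mathcal F$ up to the conversion factor $\langle\nu,e_r\rangle$: the normal speed $-\Delta H$ equals $\langle e_r,\nu\rangle\mathcal F[r]$ with the appropriate sign convention fixed in \eqref{eq:axisymmetric-normal}.

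Divide the equality of normal speeds by $\langle\nu[X^u_A],e_r\rangle$, which is nonzero by the previous step, and compose with $(I+\psi_A(q))^{-1}$ to return from the shifted point to the fixed radial coordinate. This yields \eqref{eq:normal-radial-speed-identity}.

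The main obstacle is bookkeeping rather than analysis. One must check that the signs in $\nu$ and in $\mathcal F$ match, and that $\mathcal T^\sharp$ measures the full gauge-invariant normal velocity rather than just $\partial_A\overline X_A$. I would first verify the identity at $q=0$, where it reduces to the definition of $\mathcal Q_A$ and to \eqref{eq:profile-equation}, and then extend it by the gauge-invariance argument.
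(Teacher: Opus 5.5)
Your proposal is correct and follows essentially the same route as the paper. Both arguments apply a pointwise (parameter-dependent) implicit function theorem in units of $A+|z|$, with Jacobian columns $\overline\gamma_A'(z)$ and $-\overline\nu_A(z)$, and obtain \eqref{eq:radial-normal-linearisation} by projecting the differentiated identity onto the normal and tangent directions; they then get \eqref{eq:normal-radial-speed-identity} from the fact that a radial velocity has normal component $\langle e_r,\nu\rangle\,r_t$ (for both the surface-diffusion and the $A$-velocity), dividing by this nonzero factor and undoing the reparametrisation $I+\psi_A(q)$.
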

\begin{proof}
At each $z$, equation \eqref{eq:radial-normal-exact-chart} is a system of two scalar equations for $(\psi,h)$.  
At $(q,\psi,h)=(0,0,0)$ its Jacobian in $(\psi,h)$ has columns $\overline\gamma_A'(z)$ and $-\overline\nu_A(z)$, whose determinant has absolute value $|\overline\gamma_A'|$.  
The scaled $C^1$ bounds for the matched family give a uniform lower bound.  
The parameter-dependent implicit function theorem, followed by differentiation in the scaled variables, gives the chart and its uniform bounds.  
Projecting the differentiated identity onto the tangent and normal directions gives \eqref{eq:radial-normal-linearisation}.  
Finally a radial velocity $r_t$ has normal component $r_t\langle e_r,\nu\rangle$; the same identity holds for the $A$-velocity.  
Dividing the normal residual by this nonzero factor and undoing the reparametrisation proves \eqref{eq:normal-radial-speed-identity}.
\end{proof}

Fix physical cutoffs $\chi_i+\chi_o=1$, with $\chi_i=1$ on $|z|\le\delta/2$ and $\chi_o=1$ on $|z|\ge2\delta$.  
Choose in addition a residual cutoff $\rho_i$ which is one on a neighbourhood of $\operatorname{supp}\chi_i$ (in particular at the waist and on $\operatorname{supp}\nabla\chi_i$) and whose transition to zero lies entirely inside the outer graphical patch.  
Thus the inner equation uses the actual physical residual throughout the inner-only region and the overlap where the two local graph representatives are identified.  
Let $M_A=D\mathfrak N_A(0)$ be the base radial--normal multiplier from \eqref{eq:radial-normal-linearisation}.  
The $A$-dependent local realisations are the linear maps
\begin{equation}\label{eq:fixed-interface-injections}
 J_i^Av=M_A\bigl(A v(\,\cdot\,/A,s)\bigr)(z),
 \qquad
 J_o^Aw=\operatorname{Res}_ow(\cdot,t_*(s)).
\end{equation}
For an inner function $v$ and a doubled-cap function $w$, set
\begin{equation}\label{eq:correction-reconstruction}
 h_A[v,w]=\chi_iJ_i^Av+\chi_oJ_o^Aw,
 \qquad
 u_A[v,w]=h_A[v,w].
\end{equation}
The chart in \cref{lem:uniform-volume-chart} will be used only on the initial slice $A=A_0$ to choose one outer scalar.  
It is not inserted slice-by-slice into \eqref{eq:correction-reconstruction}: doing so would make the pre-height direction $\phi$ redundant.  
For an invariant family, volume constancy follows automatically from the equation and is fixed by that single initial scalar.
The waist gauge is $v(0,s)=0$.  
Let
\[
 \mathscr P_i=\mathfrak D_\mu-\mathcal A_U
                    +(\mathcal A_U\,\cdot)(0)DU,
 \qquad
 \mathscr P_o=\partial_{t_*}-\widehat G_*.
\]
If $F$ is a physical scalar residual, let $Q_i^AF$ denote $A^3\mathfrak R_A[0]F(A\zeta)$ on $|A\zeta|<2\delta$, and let $Q_o^AF$ denote its base-clock pullback to $M_o^+$ followed by $\operatorname{Ext}_o$.  
This is a fixed target trivialisation.  
For the actual normal height $u_A$, put $q_A=\mathfrak N_A^{-1}(u_A)$ on the core.
If the two local representatives agree, then $q_A=Av+O(\|(v,w)\|^2)$, and
\[
 \mathfrak R_A[0]F
 =\mathfrak R_A[0]\mathfrak R_A[u_A]^{-1}
   \{\mathfrak R_A[u_A]F\}.
\]
For smooth graphs the first factor is algebraically invertible and equals the identity at zero.  
This factorisation identifies the zero set and the linearisation.
Thus the fixed trivialisation has the same zero set as the physical residual and its core linearisation is $\mathscr P_i$.
Write $\rho_i^A(\zeta)=\rho_i(A\zeta)$.  
For a provisional speed $\beta=\beta_0+A^{-3}\vartheta$, define
\begin{align}
 \mathcal E_i^{\rm raw}(v,w,\vartheta)
 :={}&\rho_i^A Q_i^A
 \{\beta\mathcal T_A^\sharp(u_A)-\mathcal S_A^\sharp(u_A)\}
 +(1-\rho_i^A)\mathscr P_iv.
 \label{eq:raw-inner-section}
\end{align}
On $\{\rho_i^A=1\}$, the preceding algebraic identity and \cref{eq:normal-radial-speed-identity} show that this term has the same zero set as the scaled radial residual.  
Differentiating at the similarity profile and the base clock gives $\mathscr P_i$; no order-one normal--radial conjugacy is hidden in the cutoff blocks.
At $\zeta=0$ its derivative with respect to $\vartheta$ tends to $1$, because $\partial_A\overline r_A(0)=1$.  
The scalar implicit function theorem, uniformly in $s$, gives a unique
\begin{equation}\label{eq:waist-speed-elimination}
 \vartheta=\Theta_\beta(v,w),
 \qquad
 \mathcal E_i^{\rm raw}(v,w,\Theta_\beta(v,w))(0,s)=0.
\end{equation}
Set $\beta[v,w]=\beta_0+A^{-3}\Theta_\beta(v,w)$ and define
\begin{align}
 \mathcal E_i(v,w)
 &:=\Pi\mathcal E_i^{\rm raw}(v,w,\Theta_\beta(v,w)),
 \label{eq:projected-inner-section}\\
 \mathcal E_o(v,w)
 &:=Q_o^A\{\beta[v,w]\mathcal T_A^\sharp(u_A)
          -\mathcal S_A^\sharp(u_A)\}
       +\Gamma_o\mathscr P_ow.
 \label{eq:completed-outer-section}
\end{align}
Thus $\mathcal E_i$ takes values in the zero-waist space $X_k$, and \eqref{eq:completed-outer-section} imposes an equation on the whole double.
Put
\begin{equation}\label{eq:inner-section-righthand-side}
 F_i(v,w):=\mathscr P_iv-\mathcal E_i(v,w)
\end{equation}
and define the future boundary functional explicitly by
\begin{equation}\label{eq:full-future-functional}
 \mathscr B_{\rm c}(v,w)
 :=P_{\rm c}v(\cdot,0)
 +\int_0^\infty e^{-\tau\mathcal L_{\rm c}}
 P_{\rm c}F_i(v,w)(\tau)\,\dd\tau\in E_{\rm c}.
\end{equation}
The integral is interpreted in $X_k$ and takes values in $E_{\rm c}$ by controlled-mode localisation.  
With the choice $\eta>\gamma_{\rm c}$ made below, the weighted decay built into $\mathfrak X$ implies $e^{-s\mathcal L_{\rm c}}P_{\rm c}v(s)\to0$ (including every Jordan factor).  
Duhamel's formula therefore gives the identity
\begin{equation}\label{eq:future-functional-identity}
 \mathscr B_{\rm c}(v,w)
 =-\int_0^\infty e^{-\tau\mathcal L_{\rm c}}
 P_{\rm c}\mathcal E_i(v,w)(\tau)\,\dd\tau.
\end{equation}
Consequently the future condition is enforced by inverting the core equation on the full weighted space; it is not an additional domain or target constraint.  
In particular every zero of the equation component automatically satisfies $\mathscr B_{\rm c}=0$.  
The free boundary trace and the nonlinear section are defined below, after the compatible trace space has been split off from the controlled directions.
Equations \eqref{eq:waist-speed-elimination} and \eqref{eq:completed-outer-section} show explicitly how the waist and artificial-half conditions enter this single map.  
Volume is imposed by one scalar equation on the initial slice after the dynamical section has been solved.

We now specify the section spaces.  
Choose
\[
 \gamma_{\rm c}:=\max\left\{0,
 \max_{\lambda\in\sigma(\mathcal L_{\rm c})}(-\Re\lambda)\right\}
 <\omega
\]
with the same convention when $E_{\rm c}=\{0\}$, and then choose
\begin{equation}\label{eq:mixed-weight-choice}
 \max\left\{\frac\omega2,\gamma_{\rm c}\right\}<\eta<\omega,
 \qquad
 0<a<\min\left\{\mu,\frac{2\eta-\omega}{4}\right\},
 \qquad
 \omega<\sigma_N<\min\left\{\frac\mu2,2\eta-4a\right\}.
\end{equation}
For the core set
\begin{align}
 \|v\|_{\mathbb I_\eta}
 :={}&\sup_{s\ge0}e^{\eta s}\|v(s)\|_{H^k_2}
 +\left\{\int_0^\infty e^{2\eta s}
 \bigl(\|v(s)\|_{H^{k+2}_2}^2
        +\|\mathfrak D_\mu v(s)\|_{H^{k-2}_2}^2\bigr)\,\dd s\right\}^{1/2}
 \nonumber\\
 &+\|v\|_{\mathbb C_R^k}
  +\|\mathscr P_iv\|_{\mathbb H_{R,1}^k},
 \label{eq:mixed-inner-space}\\
 \|f_i\|_{\mathbb Y_{i,\eta}}
 :={}&\left\{\int_0^\infty e^{2\eta s}
       \|f_i(s)\|_{X_k}^2\,\dd s\right\}^{1/2}
 +\|f_i\|_{\mathbb H_{R,1}^k}.
 \label{eq:mixed-inner-forcing-space}
\end{align}
Let $\mathbb I_\eta$ be the space of even waist-normalised paths for which the right side of \eqref{eq:mixed-inner-space} is finite, and let $\mathbb Y_{i,\eta}$ be the intersection space defined by \eqref{eq:mixed-inner-forcing-space}.
The last term in \eqref{eq:mixed-inner-space} is the conical graph-domain condition.  
It excludes an arbitrary nonsummable drift of the neutral cone slope and makes $\mathscr P_i:\mathbb I_\eta\to\mathbb Y_{i,\eta}$ a well-defined bounded map in the end component.
On the closed cap double let
\[
 E_0=H^k(M_o),\qquad E_1=H^{k+4}(M_o),
 \qquad E_{1/2}=(E_0,E_1)_{1/2,2}=H^{k+2}(M_o).
\]
On $I_*=[t_0,T_*)$ use the standard high-order $L^2$ maximal-regularity pair
\begin{align}
 \mathbb O
 &:={}
 H^1(I_*;E_0)\cap L^2(I_*;E_1)
 \cap C(\overline I_*;E_{1/2}),
 \label{eq:mixed-outer-space}\\
 \mathbb Y_o&:=L^2(I_*;E_0).
 \label{eq:mixed-outer-forcing-space}
\end{align}
Ordinary compact-manifold maximal regularity supplies the outer inverse with initial trace $E_{1/2}$.  
No volume constraint is imposed on the Newton domain.  
At a zero, invariance and $\int_{\Sigma}\mathcal S\,\dd\mu=0$ imply that the enclosed volume is constant in $A$; one scalar on the initial slice fixes its value.

Let $\mathcal C_\delta$ be a fixed closed collar in the common graphical domain of the two local charts, containing $\operatorname{supp}\nabla\chi_i$ and contained in $\{\rho_i=1\}$.  
Define the path compatibility operator
\begin{equation}\label{eq:fibre-path-compatibility}
 \mathfrak C_A(v,w)
 :=\left.(J_i^Av-J_o^Aw)\right|_{\mathcal C_\delta}
\end{equation}
and, using the inverse graph-speed multiplier, the source compatibility operator
\begin{equation}\label{eq:fibre-source-compatibility}
 \mathfrak C_A^Y(f_i,f_o)
 :=\left.
 \left\{\operatorname{Res}_of_o-
 (\mathfrak R_A[0])^{-1}
       \bigl(A^{-3}f_i(\,\cdot\,/A,s)\bigr)
 \right\}\right|_{\mathcal C_\delta}.
\end{equation}
Let $\mathfrak X$ be the completion, in the sum norm below, of smooth pairs in $\mathbb I_\eta\times\mathbb O$ satisfying $\mathfrak C_A(v,w)=0$ for every $s$.  
Likewise let $\mathfrak Y_{\rm eq}$ be the completion of smooth source pairs satisfying $\mathfrak C_A^Y(f_i,f_o)=0$ in the sum of the two source norms.  
Note that these completions avoid any unstated ambient trace or restriction theorem at the terminal time: 
They are the Banach spaces of two local representatives of one completed global height and one completed global residual.  
The nonlinear chart $\mathfrak N_A$ is absent from their definition; it occurs only in the smooth coefficients of the section.

Let
\[
 \operatorname{Tr}(v,w)=(v(\cdot,0),w(t_0)),
 \qquad
 \mathcal T^\#:=\operatorname{Tr}\mathfrak X,
\]
where $\mathcal T^\#$ is equipped with the quotient norm from $\mathfrak X$.  
The compactly supported functions $\psi_1,\ldots,\psi_N$ in \cref{lem:compact-core-selection} give an isomorphism
\[
 B:\mathbb R^N\longrightarrow E_{\rm c},
 \qquad Ba=\sum_{j=1}^Na_jP_{\rm c}\psi_j.
\]
Define the compatible controlled lift and the free-trace projection by
\begin{align}
 \mathcal J_{\rm c}c
 &:={}
 \left(\sum_{j=1}^N(B^{-1}c)_j\psi_j,0\right)\in\mathcal T^\#,
 \label{eq:controlled-compatible-lift}\\
 \mathcal P_{\rm f}(v_0,w_0)
 &:={}(v_0,w_0)-\mathcal J_{\rm c}(P_{\rm c}v_0).
 \label{eq:free-trace-projection}
\end{align}
Thus
\[
 \mathcal T_{\rm f}^\#:=\mathcal P_{\rm f}\mathcal T^\#
 =\{(v_0,w_0)\in\mathcal T^\#:P_{\rm c}v_0=0\},
 \qquad
 \mathcal T^\#=\mathcal T_{\rm f}^\#\oplus
                 \mathcal J_{\rm c}E_{\rm c}.
\]
Finally set $\mathfrak Y=\mathfrak Y_{\rm eq}\times\mathcal T_{\rm f}^\#$.
For a prescribed free trace $b\in\mathcal T_{\rm f}^\#$ define
\begin{equation}\label{eq:fully-defined-section}
 \mathscr S_b(v,w)
 :=\begin{pmatrix}
 \mathcal E_i(v,w)\\
 \mathcal E_o(v,w)\\
 \mathcal P_{\rm f}\operatorname{Tr}(v,w)-b
 \end{pmatrix},
 \qquad \mathscr S:=\mathscr S_0.
\end{equation}
On $\mathcal C_\delta$ one has $\rho_i=1$ and $h_A=J_i^Av=J_o^Aw$.  
Hence the two equation components are precisely the two fixed local representatives of the same physical residual, and \eqref{eq:fibre-source-compatibility} holds identically.  
It follows that $\mathscr S_b$ is a map between these fixed spaces, rather than a section of a moving nonlinear residual bundle.
Let $\mathfrak Z_\beta$ be the weighted space of continuous scalar functions with the norm below.  
Explicitly,
\begin{align}
 \|(v,w)\|_{\mathfrak X}
 &:=\|v\|_{\mathbb I_\eta}+\|w\|_{\mathbb O},
 \nonumber\\
 \|(f_i,f_o,b)\|_{\mathfrak Y}
 &:=\|f_i\|_{\mathbb Y_{i,\eta}}
   +\|f_o\|_{\mathbb Y_o}+\|b\|_{\mathcal T^\#},
 \label{eq:mixed-product-spaces}\\
 \|\vartheta\|_{\mathfrak Z_\beta}
 &:=\sup_{s\ge0}e^{\eta s}|\vartheta(s)|.
 \label{eq:mixed-speed-space}
\end{align}
The quotient norm on $\mathcal T^\#$ controls both the inner trace norm from \eqref{eq:conical-parabolic-trace-norm} with $n=0$ and the outer $E_{1/2}$ trace norm.

The separate inner and outer variables are only local representations of one global height; estimating the four formal product-space blocks individually would therefore count the principal overlap operator twice.
The correct linear statement is the following fibre-product estimate.

\begin{lemma}[Uniform fibre-product inverse]
\label{lem:fixed-interface-block-bounds}
Put $\varepsilon=A_0/\delta$.  
For $\varepsilon$ sufficiently small, the linearisation
\[
 D\mathscr S(0):\mathfrak X\longrightarrow\mathfrak Y
\]
has a bounded inverse on the admissible trace fibre, and
\begin{equation}\label{eq:fibre-product-linear-estimate}
 \|(v,w)\|_{\mathfrak X}
 +\|D\Theta_\beta(0)(v,w)\|_{\mathfrak Z_\beta}
 \le C\|D\mathscr S(0)(v,w)\|_{\mathfrak Y}.
\end{equation}
The constant is independent of $0<\varepsilon\le\varepsilon_0$.
\end{lemma}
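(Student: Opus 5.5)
We first compute the linearisation explicitly and then assemble the inverse from the two local inverses by a parametrix argument. At $(v,w,\vartheta)=(0,0,0)$ the inner block of $D\mathscr S(0)$ is $\mathscr P_iv$, up to the cutoff and speed terms. The waist speed enters only through $D\Theta_\beta(0)$, which is the scalar functional fixed by $(\mathcal E_i^{\rm raw})(0,s)=0$. Its derivative in $\vartheta$ tends to $1$, so
\[
|D\Theta_\beta(0)(v,w)(s)|\le C\{|(\mathscr P_i v)(0,s)|+\hbox{lower order}\},
\]
and the $\mathfrak Z_\beta$ bound in \eqref{eq:fibre-product-linear-estimate} will follow once $v$ is controlled, using the trace estimate for $k\ge8$. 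The outer block is $\operatorname{Res}_o$ of the cap linearisation plus $\Gamma_o\mathscr P_o w$, which together form the full operator $\mathscr P_o$ on the double $M_o$. The third block is $\mathcal P_{\rm f}\operatorname{Tr}$.

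Given data $(f_i,f_o,b)\in\mathfrak Y$ with the source compatibility $\mathfrak C^Y_A=0$, we build an approximate inverse in three steps.

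\textbf{Inner solution.} Solve $\mathscr P_iv_1=f_i$ with $P_{\rm s}v_1(0)$ equal to the inner free trace and the future condition \eqref{eq:linear-stable-future-conditions}. This is \cref{prop:convective-maximal-regularity}, which yields the $\mathbb I_\eta$ bound; the conical part comes from \eqref{eq:conical-halfline-summable-estimate}. The controlled component of the trace is then determined, consistent with $\mathcal T^\#=\mathcal T_{\rm f}^\#\oplus\mathcal J_{\rm c}E_{\rm c}$.

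\textbf{Outer solution.} Solve $\mathscr P_ow_1=f_o$ on $I_*$ with trace $w_0$. Ordinary maximal regularity on the closed double applies, with constants uniform by \cref{lem:outer-cap-time}, since the base-clock interval has length $<\tau_{\rm out}$.

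\textbf{Gluing.} Form the candidate height $\chi_iJ_i^Av_1+\chi_oJ_o^Aw_1$ and re-split it into the two representatives using the collar $\mathcal C_\delta$. This gives a pair in $\mathfrak X$, i.e. one satisfying $\mathfrak C_A=0$.

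Applying $D\mathscr S(0)$ to the glued pair reproduces $(f_i,f_o,b)$ up to an error $\mathcal K$. This error consists of two kinds of term. The first kind are commutators $[\mathscr P,\chi_{i,o}]$, which are supported in the fixed physical annulus $\delta/2\le|z|\le2\delta$. The second kind is the mismatch between the frozen similarity coefficients and the true base coefficients of $\overline\Sigma_A$ there, which \cref{prop:fixed-interface-matching} controls.

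\textbf{Main obstacle: showing $\|\mathcal K\|\le C\varepsilon^\theta$ for some $\theta>0$.} A Neumann series would then invert $I+\mathcal K$, and together with the uniform bounds of the two local inverses this gives \eqref{eq:fibre-product-linear-estimate} with a constant independent of $\varepsilon$. Injectivity then follows as well, so the linearisation is an isomorphism on the trace fibre.

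The difficulty is that the commutators are of order three. The plan is to handle it as follows.
\begin{itemize}
\item In similarity variables the annulus sits at $|\zeta|\asymp R(s)=\delta/A(s)$, where the inner operator is transport-dominated. The fourth-order commutator of order three therefore gains a factor $R^{-1}=\varepsilon e^{-\mu s}$ per derivative relative to the $\langle\zeta\rangle^{j-1}$ weights in $\mathbb C^k_R$ and $\mathbb H^k_{R,1}$.
\item Converting the inner commutator to the outer clock uses the factor $A^4$ from \eqref{eq:outer-base-clock}. Its integral \eqref{eq:outer-volterra-integral} is $O(\varepsilon^4)$, which makes the outer-fed errors small in the summable $\ell^1$ norm $\mathbb H^k_{R,1}$.
\item In the reverse direction, outer solutions restricted to the annulus enter the inner problem with weight $\langle\zeta\rangle^{-4}\sim\varepsilon^4e^{-4\mu s}$. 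This beats $e^{2\eta s}$ because $\eta<\omega<\mu/2$.
\end{itemize}

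The delicate points are, first, that the $\mathcal P_{\rm f}$ trace block and the future condition must not reintroduce an $O(1)$ controlled component. For this I would use that $P_{\rm c}$ of annulus-supported terms is exponentially small, by the adjoint decay $e^{-c|\zeta|^{4/3}}$ in \cref{lem:controlled-mode-localisation}. Second, the neutral slope channel has to be treated only in $\ell^1$, never in $\ell^\infty$.
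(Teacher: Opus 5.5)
Your strategy differs from the paper's. The paper never makes the gluing error small. It patches the inner, collar and outer estimates into \eqref{eq:fibre-product-linear-estimate} up to a compact lower-order term. It then removes that term uniformly in $\varepsilon$ by a contradiction argument over four concentration regimes: bounded similarity sets; the intermediate end $|\zeta_j|\to\infty$, $A_j|\zeta_j|\to0$, where $\eta<\mu/2$ enters; the cap; and the collar. Surjectivity comes from a homotopy to index zero. A small-error Neumann series would bypass both steps, because the global inner inverse (\cref{prop:convective-maximal-regularity} with \eqref{eq:conical-halfline-summable-estimate}) and the outer inverse are already uniform in $\varepsilon$. Your inner-to-inner and outer-to-inner smallness is also consistent with the paper's own $O(\varepsilon^{1/2})$ and $O(A_0^{3/2})$ bounds. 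As written, however, the argument does not close.

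The gap is the inner-to-outer block. You assert that re-splitting the glued height ``gives a pair in $\mathfrak X$'', which means $\chi_iJ_i^Av_1\in\mathbb O$ on the collar. You also need $[\mathscr L,\chi_i]J_i^Av_1$ to be small in $\mathbb Y_o=L^2(I_*;H^k)$. None of your bullets handles this direction correctly. The second, read this way, presumes the commutator is uniformly $O(1)$ in physical $H^k$, so that only $\int A^4\dd s$ matters, and weight counting cannot give that.

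Here is why. $\mathbb C^k_R$ controls derivatives of order $k+1,\dots,k+4$ only with the weight $\langle\rho\rangle^{k-1}$. It controls $\mathfrak D_\mu v$ only with $\langle\rho\rangle^{-1}$, and the $\mathbb H^k_{R,1}$ term makes its block sizes $b_n$ merely summable. With $z=A\zeta$, $\partial_{t_*}\{Av(z/A)\}=A^{-3}\mathfrak D_\mu v$ and $\dd t_*=A^4\dd s$, an element of $\mathbb I_\eta$ may have $\|h\|_{L^2(J_n;H^{k+3}(\mathcal C_\delta))}^2\sim A_n^{-2}$ and $\|\partial_{t_*}h\|_{L^2(J_n;L^2(\mathcal C_\delta))}^2\sim A_n^{-4}b_n^2$. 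Neither is summable in $n$. Likewise $\|f_i\|_{\mathbb Y_{i,\eta}}$ does not control the physical source on the collar.

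The missing idea is the paper's \eqref{eq:overlap-physical-maximal-estimate}. By the source compatibility \eqref{eq:fibre-source-compatibility} and the compatible initial trace, $J_i^Av_1$ solves the physical equation with source $\operatorname{Res}_of_o$ near $\operatorname{supp}\nabla\chi_i$. Local physical maximal regularity then bounds it in $\mathbb O$ there. Its lower-order term is $O(A_0^2)\|v_1\|_{\mathbb C^k_R}$ because $|I_*|=O(A_0^4)$. Interpolation, $\|h\|_{L^2(I_*;H^{k+3})}\le C|I_*|^{1/4}\|h\|_{\mathbb O}$, then makes the commutator $O(A_0)$.

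Three further repairs are needed.
\begin{enumerate}
\item Prescribing $P_{\rm s}v_1(0)=b_i$ with $P_{\rm c}v_1(0)=c$, where $b=(b_i,b_o)$, gives $\mathcal P_{\rm f}\operatorname{Tr}=b+(c-\sum_j(B^{-1}c)_j\psi_j,0)$. This is an $O(1)$ trace error. In addition, generalised eigenvectors generally carry algebraic tails $|\zeta|^{1-\lambda/\mu}$ into the collar, which breaks trace compatibility. Use the compactly supported lift $v_1(0)=b_i+\sum_j(B^{-1}c)_j\psi_j$, as the paper does.
\item A right Neumann series gives only surjectivity. \eqref{eq:fibre-product-linear-estimate} also needs a left parametrix; the same cross-term bounds provide it.
\item $(\mathscr P_iv)(0,s)\equiv0$ in the waist gauge, so your displayed bound on $D\Theta_\beta(0)$ is vacuous. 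The speed must be controlled through the unnormalised $(\mathcal A_Uv)(0,s)$.
\end{enumerate}
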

\begin{proof}
We give the localisation argument.

The boundary component in \eqref{eq:fully-defined-section} prescribes only the free part of a full compatible trace.  
If that part is $b$, write the unknown initial trace uniquely as $b+\mathcal J_{\rm c}c$.  In the decoupled core problem the future formula \eqref{eq:linear-stable-future-conditions} determines $c\in E_{\rm c}$ uniquely from the forcing; its derivative with respect to $c$ is the identity.  
The finite-dimensional equation remains invertible for small $\varepsilon$ by the localisation estimate below.  
Note that this is the joint initial--future compatibility that would be missed by prescribing the stable and outer traces independently.

On the inner chart, the radial--normal identity \eqref{eq:normal-radial-speed-identity} conjugates the principal linearisation to $\mathscr P_i$.  
The unit-interval estimate \eqref{eq:convective-maximal-estimate}, its uniformly local counterpart \eqref{eq:conical-parabolic-estimate}, and the future choice on $E_{\rm c}$ give the inner estimate.  
The summable estimate \eqref{eq:conical-halfline-summable-estimate}, together with the equation, also controls $\|\mathscr P_iv\|_{\mathbb H_{R,1}^k}$; the end coefficient errors are summable and are absorbed after decreasing $\varepsilon_0$.
On the cap, ordinary compact-manifold maximal regularity gives the outer estimate; the completed equation controls the artificial half of the double.

On $\mathcal C_\delta$ the compatibility equation \eqref{eq:fibre-path-compatibility} says that the two fields are the same physical normal graph.  
In particular an arbitrary oscillation in the logarithmic time $s$ is not admissible unless it has the physical $H^1_{t_*}H^k$ regularity supplied by the outer representative.  
Localising the common graph to a slightly larger collar and using \eqref{eq:fibre-source-compatibility} gives the ordinary estimate for the physical linearisation
\[
 \mathscr L_A^{\rm phys}
 :=D_u\{\beta_0(A)\mathcal T_A^\sharp(u)
             -\mathcal S_A^\sharp(u)\}_{u=0}.
\]
\begin{align}
 &\|h\|_{H^1(I_*;H^k(\mathcal C_\delta))}
 +\|h\|_{L^2(I_*;H^{k+4}(\mathcal C_\delta))}
 +\|h\|_{C(\overline I_*;H^{k+2}(\mathcal C_\delta))}
 \nonumber\\
 &\qquad\le C\left(
 \|\mathscr L_A^{\rm phys} h\|_{L^2(I_*;H^k(\mathcal C_\delta'))}
 +\|h(t_0)\|_{H^{k+2}(\mathcal C_\delta')}
 +\|h\|_{L^2(I_*;H^k(\mathcal C_\delta'))}\right),
 \label{eq:overlap-physical-maximal-estimate}
\end{align}
where $\mathcal C_\delta'$ is a fixed larger collar.  
The commutators with the localisation have order at most three.  
Formula
\[
 \mathfrak D_\mu\{\chi(A\zeta)v\}
 =\chi(A\zeta)\mathfrak D_\mu v
\]
shows that no unbounded transport commutator is hidden here.  
Thus the part of the residual-cutoff transition lying outside the common-height collar is an outer-to-inner term only.  
Scaling $z=A\zeta$ and using $H^{k+4}$ outer maximal regularity gives the weighted Hilbert bound $O(\varepsilon^{1/2})$.  
Write $A_n=A(n)=A_0e^{-\mu n}$.  
For the summable conical component, its $n$th characteristic block is bounded by
\[
 C_\delta A_n^{3/2}
 \left\{
  \|\partial_{t_*}w\|_{L^2(J_n;H^k)}
  +\|w\|_{L^2(J_n;H^{k+4})}
 \right\},
 \qquad J_n=[t_*(n),t_*(n+1)].
\]
Cauchy--Schwarz and $\sum_nA_n^3<\infty$ give an $O_\delta(A_0^{3/2})\|w\|_{\mathbb O}$ bound.  
Thus the combined norm is $O(\varepsilon^{1/2})$ from $\mathbb O$ to $\mathbb Y_{i,\eta}$; the fourth derivative is included in $E_1$ precisely for these estimates.  
Thus the inner, collar and outer estimates patch to an a priori bound of the form \eqref{eq:fibre-product-linear-estimate}, up to a compact lower-order term.

For completeness, uniform removal of that compact term follows by contradiction.  
If no $\varepsilon_0$ and uniform constant existed, one could take $\varepsilon_j\downarrow0$ and a normalised sequence whose image tends to zero.  
Parabolic compactness and a partition adapted to the local scale leave four possible concentration regions.  
On bounded similarity sets it converges to a homogeneous solution of $\mathscr P_i v=0$ with zero free trace; the controlled future formula then makes the full trace zero, so the solution vanishes.

There is also a possible intermediate end regime $|\zeta_j|\to\infty$ with $A_j|\zeta_j|\to0$.  
It is excluded as follows.
Choose physical radii $r_j$ with $A_j(s_j)\ll r_j\ll\delta$ around a putative concentration and localise by wide logarithmic cutoffs of the form $\chi(A(s)\zeta/r_j)$.  
Since $A_s=-\mu A$,
\[
 \mathfrak D_\mu\{\chi(A\zeta/r_j)v\}
 =\chi(A\zeta/r_j)\mathfrak D_\mu v;
\]
there is no time--transport commutator.  
The spatial commutators have order at most three and tend to zero as the logarithmic width tends to infinity.
The end version of \eqref{eq:convective-maximal-estimate}, followed by \eqref{eq:conical-parabolic-estimate}, bounds the localised graph norm by its initial conical trace, its localised source and the discounted Hilbert norm.  
All three tend to zero for the bad sequence.  
The strict inequality $\eta<\mu/2$ is precisely the positive gap to the transported essential end \eqref{eq:fredholm-exact-essential-boundary}; hence no weighted homogeneous slope packet remains.  
This contradicts concentration in the intermediate regime.

On compact outer sets the sequence converges to a homogeneous cap solution with zero initial trace, hence vanishes.  
Any remaining mass lies in the fixed physical collar, where \eqref{eq:overlap-physical-maximal-estimate} and the two already vanishing neighbouring pieces force it to zero.  
This contradicts normalisation.

Local inner and outer inverses, first corestricted to one compatible global height and then restricted back to the two charts, give a parametrix on the fibre product.  
Its errors are precisely the compact third-order commutators just estimated.  
A homotopy through the compatible local principal symbols to the inner initial--future operator and the outer initial-value operator shows that the Fredholm index is zero.  
The a priori estimate gives injectivity and closed range, hence surjectivity and the asserted bounded inverse.  
Finally the scalar implicit estimate at the waist supplies the last term in \eqref{eq:fibre-product-linear-estimate}.
\end{proof}

The linear inverse is only half of the perturbative argument.
We next verify the smoothness and quadratic bounds required for contraction.

\begin{lemma}[Smoothness and quadratic remainder]
\label{lem:section-quadratic-remainder}
On a ball in $\mathfrak X$ on which the reconstructed radius is positive and $\|\Theta_\beta(v,w)\|_{L^\infty}<\mu/2$, the section \eqref{eq:fully-defined-section} is $C^2$.  
If $Z=(v,w)$ and
\[
 \mathscr R(Z)=\mathscr S(Z)-\mathscr S(0)-D\mathscr S(0)Z,
\]
then
\begin{equation}\label{eq:section-quadratic-estimate}
 \|\mathscr R(Z)-\mathscr R(\widetilde Z)\|_{\mathfrak Y}
 \le C(\|Z\|_{\mathfrak X}+\|\widetilde Z\|_{\mathfrak X})
       \|Z-\widetilde Z\|_{\mathfrak X}.
\end{equation}
Moreover, for zero free trace data, the residual of the fixed-volume approximate family obeys
\begin{equation}\label{eq:section-residual-estimate}
 \|\mathscr S(0)\|_{\mathfrak Y}
 \le C\left\{(A_0/\delta)^{17/2}+(A_0/\delta)^4\right\}.
\end{equation}
The constants are independent of $A_0/\delta$.
\end{lemma}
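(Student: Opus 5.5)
The argument has three parts: smoothness of the section, the quadratic Lipschitz bound \eqref{eq:section-quadratic-estimate}, and the residual bound \eqref{eq:section-residual-estimate}. Smoothness and the quadratic bound follow from the structure of $\mathscr S$: its coefficients are smooth functions of finitely many jets of the reconstructed height.

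\textbf{Smoothness.} The section \eqref{eq:fully-defined-section} is a composition of four kinds of maps:
\begin{itemize}
\item the bounded linear injections $J_i^A,J_o^A$ and the cutoffs $\chi_i,\chi_o,\rho_i$;
\item the chart inverse $\mathfrak N_A^{-1}$ and the residual map $\mathfrak R_A[u]$ from \cref{lem:radial-normal-gauge};
\item the rational geometric operators $\mathcal S_A^\sharp,\mathcal T_A^\sharp$;
\item the scalar waist elimination $\Theta_\beta$ from \eqref{eq:waist-speed-elimination}.
\end{itemize}
On the inner piece I will write $\mathcal F[\overline r_A+q]$ in similarity variables as $A^{-3}$ times $\mathcal F[U+v+e_A]$, where $e_A$ is the matching correction. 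This is a smooth rational function of $(v,v',\dots,v^{(4)})$ whose denominators involve $U+v$ and $1+(U'+v')^2$. These stay bounded away from zero on the ball, because positivity of the reconstructed radius and smallness of $\mathfrak c_k$ are both controlled by $\|v\|_{\mathbb C_R^k}$.

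Since $k\ge8$, one-dimensional Moser and tame product estimates make this Nemytskii map $C^2$ in each norm appearing in $\mathbb I_\eta\to\mathbb Y_{i,\eta}$:
\begin{itemize}
\item the discounted $H^k_2$ and $H^{k+2}_2$ parts;
\item the characteristic-box parts $\mathscr G^k_R$ and $\mathscr H^k_R$.
\end{itemize}
The weights $\langle\zeta\rangle^{j-1}$ match the homogeneity of the conical coefficients, so each box estimate has the same form as on a unit box. The outer block is the standard $C^2$ dependence of quasilinear coefficients on $\mathbb O\subset C(\overline I_*;H^{k+2})$, with $k+2>4+1/2$. Finally, $\Theta_\beta$ is $C^2$ by the implicit function theorem. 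Its $\vartheta$-derivative is close to $1$, and the hypothesis $\|\Theta_\beta\|_{L^\infty}<\mu/2$ keeps the clock ratio \eqref{eq:actual-clock-ratio} smooth.

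\textbf{Quadratic bound.} Estimate \eqref{eq:section-quadratic-estimate} then follows from the integral form of Taylor's theorem, $\mathscr R(Z)-\mathscr R(\widetilde Z)=\int_0^1\{D\mathscr S(\widetilde Z+\theta(Z-\widetilde Z))-D\mathscr S(0)\}(Z-\widetilde Z)\,\dd\theta$, combined with a uniform bound on $D^2\mathscr S$ over the ball. For the inner part this is the same structure already used for $\mathcal N$ in \cref{prop:terminal-core-semiflow}, namely \eqref{eq:terminal-time-one-tame}. The waist terms $(\mathcal A_Uv)(0)$ are handled by the trace-interpolation bound from that proof.

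\textbf{Main obstacle: uniformity in $\varepsilon=A_0/\delta$.} The difficulty is the summable $\mathbb H^k_{R,1}$ component of the target together with the coupling to the outer block. I would argue block by block.
\begin{itemize}
\item The fixed trivialisations $Q_i^A,Q_o^A$ are chosen independently of $(v,w)$, so nonlinearity enters only through the height.
\item The second derivative of $\mathscr S_i$ in $\mathbb H^k_{R,1}$ is a sum over $n$. Its $n$th term is bounded by a product of the $\mathscr G^k_R(I_n)$ norm, whose supremum is $\|\cdot\|_{\mathbb C^k_R}$, and a second factor that is summable in $n$. That second factor is either the decay $e^{-\eta n}$ of the discounted norm or, for the cone-slope channel, $\|\mathscr P_iv\|_{\mathbb H^k_{R,1}}$, since products of neutral slopes carry the $\ell^1$ factor through the equation.
\item In the collar cross terms the outer contribution scales like $A_n^{3/2}$, as in the proof of \cref{lem:fixed-interface-block-bounds}, and is summable uniformly.
\end{itemize}

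\textbf{Residual bound.} For \eqref{eq:section-residual-estimate}, at $Z=0$ the free-trace component vanishes, and the inner component is $\Pi$ applied to the scaled defect $g_A=A^3\mathcal Q_A$. By \eqref{eq:fixed-match-weighted-source} with $m=2$ this is $O((A/\delta)^{17/2})$ in $H^k_2$, which decays like $e^{-17\mu s/2}$, faster than $e^{-\eta s}$ because $\eta<\mu/2$. So its $\mathbb Y_{i,\eta}$ norm is $O(\varepsilon^{17/2})$. The $\mathbb H^k_{R,1}$ piece is handled the same way using the box-wise pointwise bounds $|\partial^jg_A|\le C(A/\delta)^{7+j}$.

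The outer component is bounded by \eqref{eq:fixed-match-defect}, which gives a defect of size $A^4\delta^{-7}$. The outer volume variation moves the surface by an amount whose physical velocity is $O(A^4\delta^{-5})$, and both are integrated over an interval of length $\le A_0^4/(4\mu)$ as in \eqref{eq:outer-volterra-integral}. Their $L^2(I_*;H^k)$ norms are therefore $O(\varepsilon^4)$, with the $\delta$ factors absorbed once $\delta$ is fixed.

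Finally, the waist elimination contributes $\Theta_\beta(0)$. By the scalar implicit function theorem this is bounded by the defect at $\zeta=0$, which is already controlled by the inner estimate.
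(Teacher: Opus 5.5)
Your residual estimate and the Taylor-remainder reduction match the paper, but the smoothness step has a genuine gap.

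\textbf{Derivative loss in the smoothness step.} The inner component of $\mathscr S$ is not the radial residual $\beta\partial_A(\overline r_A+q)-\mathcal F[\overline r_A+q]$. By \eqref{eq:raw-inner-section} it is $\rho_i^AQ_i^A\{\beta\mathcal T_A^\sharp(u_A)-\mathcal S_A^\sharp(u_A)\}$, built with the fixed trivialisation $\mathfrak R_A[0]$. Here $u_A=h_A[v,w]$ is linear in $(v,w)$, so $q_A=\mathfrak N_A^{-1}(u_A)=Av+O(\|(v,w)\|^2)$, not $Av$. Your identification of the inner piece with $A^{-3}\mathcal F[U+v+e_A]$ is therefore false.

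To reach a radial expression at all you must apply $\mathfrak R_A[0]\mathfrak R_A[u_A]^{-1}$. This composes the residual with the $u$-dependent reparametrisation $I+\psi_A(q)$ of \cref{lem:radial-normal-gauge}. Differentiating $G\circ(I+\psi_A(q))$ in $q$ produces $G'$. At equal regularity this map is therefore continuous but not $C^1$, let alone $C^2$, from $\mathfrak X$ to $\mathfrak Y$.

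The paper states explicitly that this factorisation is used only to identify zero sets and the linearisation, and is avoided in the $C^2$ proof for exactly this reason. Instead it writes $\mathfrak R_A[0]\{\beta\mathcal T_A^\sharp(u)-\mathcal S_A^\sharp(u)\}$ directly in the base normal coordinate. This is a quasilinear fourth-order expression in $u_A$ plus the material derivative, with no composition. In it, $\mathscr P_iv$ appears with coefficient $I+b_A(u,\partial u)$, where $b_A(0,0)=0$. Only in that form do the Moser estimates give a $C^2$ Nemytskii map. The quadratic bound for $b_A\mathscr P_iv$ in $\mathbb H^k_{R,1}$ then comes from the graph-domain term $\|\mathscr P_iv\|_{\mathbb H^k_{R,1}}$.

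\textbf{Summability in $\mathbb H^k_{R,1}$.} Your summability argument is also incomplete. The factor $e^{-\eta n}$ from the discounted norm does not survive on the far boxes. Passing from $H^k_2$, with weight $\langle\zeta\rangle^{-4}$, to an unweighted box norm at $|\rho|\sim R_0e^{\mu n}$ costs roughly $\langle\rho\rangle^{2}\sim e^{2\mu n}$, far more than $e^{-\eta n}$ can absorb. The factor $\|\mathscr P_iv\|_{\mathbb H^k_{R,1}}$ handles only the principal term $b_A\mathscr P_iv$, not lower-order products of neutral slopes.

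The paper splits each block at $|\zeta|=e^{an}$:
\begin{itemize}
\item inside, the conversion costs $e^{2an}$ per factor, giving $O(e^{-(2\eta-4a)n})$;
\item outside, the conical coefficient and curvature remainders are $O(e^{-4an})$ in the box weights.
\end{itemize}
Both are summable by \eqref{eq:mixed-weight-choice}. Without this split and the far-end coefficient decay, your bound on the nonlinear remainder in $\mathbb H^k_{R,1}$ is not summable.

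Your outer residual bound $O(\varepsilon^4)$ is coarser than necessary but suffices; the paper gets $O_\delta(\varepsilon^6)$ from $\dd t_*=A^4\dd s$.
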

\begin{proof}
In rotational graph coordinates the surface diffusion operator is the rational fourth-order expression \eqref{eq:profile-equation}; on the cap it is the usual smooth normal-graph operator.  
Positivity of the radius and the $C^1$ smallness built into $\mathfrak X$ keep every denominator uniformly separated from zero.  
The one-dimensional Moser estimates, the algebra property of $H^{k-2}$ for $k\ge10$, and ordinary product estimates on $M_o$ show that the quasilinear coefficients are $C^2$ from the section ball to the forcing spaces.  
The main Hilbert-space estimate is, for a typical principal coefficient $a$,
\begin{align*}
 &\|\{a(U'+v')-a(U'+\widetilde v')\}\partial_\zeta^4v\|_
 {L^2_\eta H^{k-2}_2}\\
 &\qquad\le C\|v-\widetilde v\|_{L^\infty_\eta H^k_2}
                 \|v\|_{L^2_\eta H^{k+2}_2}.
\end{align*}
The second factor supplies one additional $e^{-\eta s}$, so the product lies in the $\eta$-weighted source space.  
Polarisation treats differences of two remainders.  
On the moving cylinder the same product calculation is built into the graph/source pair $\mathscr G_R^k\to\mathscr H_R^k$: a source derivative of order $j$ uses a solution derivative of order $j+4$ with precisely the same weight in \eqref{eq:conical-parabolic-graph-norm}.  
Thus there is no derivative loss and no endpoint smoothing assertion.  
On the discounted inner part the $n$th source block is $O(e^{-(2\eta-4a)n})$; on the conical end the coefficient and curvature remainders are $O(e^{-4an})$.  
Both sequences are summable, so the nonlinear remainder takes values in $\mathbb H_{R,1}^k$ and obeys the same quadratic difference estimate there.

The factorisation through $\mathfrak R_A[u]^{-1}$ above is not used here, because its composition operator would lose a derivative at equal regularity.  
Instead write the fixed-coordinate expression $\mathfrak R_A[0]\{\beta\mathcal T_A^\sharp(u)-\mathcal S_A^\sharp(u)\}$ directly in the base normal coordinate.  
It is a quasilinear fourth-order differential expression plus the material derivative.  
The coefficient of $\mathscr P_iv$ has the form $I+b_A(u,\partial u)$ with $b_A(0,0)=0$.  
The scaled Moser estimate and the conical graph-domain term in \eqref{eq:mixed-inner-space} give, for $Z=(v,w)$ and $\widetilde Z=(\widetilde v,\widetilde w)$,
\begin{align*}
 &\|b_A(u_Z,\partial u_Z)\mathscr P_iv
   -b_A(u_{\widetilde Z},\partial u_{\widetilde Z})
      \mathscr P_i\widetilde v\|_{\mathbb H_{R,1}^k}\\
 &\qquad\le C(\|Z\|_{\mathfrak X}+\|\widetilde Z\|_{\mathfrak X})
                \|Z-\widetilde Z\|_{\mathfrak X}.
\end{align*}
All remaining terms use the four-spatial-derivative gap between $\mathscr G_R^k$ and $\mathscr H_R^k$.  
Thus the fixed-coordinate section is $C^2$ into the summable conical target without derivative loss.

The speed map is $C^2$ by \eqref{eq:waist-speed-elimination}, and the cutoff and extension maps are fixed.  
Finally \cref{lem:controlled-mode-localisation} and $\sigma_N<2\eta-4a$ control the future Duhamel integral in the full weighted space.  
This proves \eqref{eq:section-quadratic-estimate}, including the waist and clock terms.

The inner matching defect is bounded by \eqref{eq:fixed-match-weighted-source}; with $m=2$ this gives the first term in \eqref{eq:section-residual-estimate}.  
Its differentiated annular bounds \eqref{eq:fixed-match-defect} decay geometrically from one characteristic block to the next and give the same estimate in $\mathbb H_{R,1}^k$.  
On the cap, \eqref{eq:fixed-match-defect} gives a physical source of size $O(A^4)$.
Since $\dd t_*=A^4\dd s$,
\[
 \|\mathcal Q_A\|_{\mathbb Y_o}^2
 \le C_\delta\int_0^\infty A(s)^{12}\,\dd s
 \le C_\delta A_0^{12}.
\]
This is $O_\delta(\varepsilon^6)$ and hence is bounded by the coarser second term in \eqref{eq:section-residual-estimate} for $0<\varepsilon\le1$.
\end{proof}

Combining this nonlinear estimate with the fibre-product inverse now gives the complete half-line construction for a prescribed compatible trace.

\begin{theorem}[Fixed-interface half-line section]
\label{thm:terminal-mixed-inverse}
Take $k\ge10$, $m=2$, and choose the weights as in \eqref{eq:mixed-weight-choice}.  
Impose the waist gauge $v(0,s)=0$, use the compatible global-height fibre product, prescribe the free compatible trace at $A=A_0$, and impose the controlled future condition through weighted decay.  
Then, for $A_0/\delta$ sufficiently small, the linearised section is an isomorphism from $\mathfrak X$ to $\mathfrak Y$ and
\begin{equation}\label{eq:terminal-mixed-inverse}
 \|L^{-1}Y\|_{\mathfrak X}
 +\|D\Theta_\beta(0)L^{-1}Y\|_{\mathfrak Z_\beta}
 \le C\|Y\|_{\mathfrak Y},
 \qquad L=D\mathscr S(0),
\end{equation}
with $C$ independent of $A_0$.  
With zero free trace, the nonlinear section has a unique zero in the ball of radius
\[
 C\{(A_0/\delta)^{17/2}+(A_0/\delta)^4\}.
\]
After reducing $A_0/\delta$, this selected ball also satisfies $\|\Theta_\beta(v,w)\|_{L^\infty}<\mu/2$; hence the actual clock in \eqref{eq:actual-clock-ratio} is positive and uniformly comparable with the base clock.
For a prescribed free trace $b$, add $C\|b\|_{\mathcal T^\#}$ to this radius.
For the homogeneous selected data used below, the resulting normal displacement is $o(1)$ relative to the local geometric scale uniformly as $A_0\downarrow0$.
\end{theorem}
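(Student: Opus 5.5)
The argument is a standard Newton--Kantorovich (contraction) scheme built on the two preceding lemmas. First, the linear isomorphism statement \eqref{eq:terminal-mixed-inverse} is essentially \cref{lem:fixed-interface-block-bounds}. That lemma gives the a priori bound \eqref{eq:fibre-product-linear-estimate} with a constant uniform in $0<\varepsilon\le\varepsilon_0$, together with Fredholm index zero. Injectivity and closed range then yield surjectivity, so $L=D\mathscr S(0)$ is invertible from $\mathfrak X$ onto $\mathfrak Y$. The speed term $D\Theta_\beta(0)L^{-1}Y$ is controlled by the last term in \eqref{eq:fibre-product-linear-estimate}.

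For the nonlinear zero with free trace $b$, I would rewrite $\mathscr S_b(Z)=0$ as the fixed-point problem
\[
 Z=\mathcal K_b(Z):=-L^{-1}\bigl\{\mathscr S(0)-(0,0,b)+\mathscr R(Z)\bigr\},
\]
using $\mathscr S_b=\mathscr S-(0,0,b)$. Let $r=2C\{\|\mathscr S(0)\|_{\mathfrak Y}+\|b\|_{\mathcal T^\#}\}$. By \eqref{eq:section-residual-estimate}, $r\le C'\{(A_0/\delta)^{17/2}+(A_0/\delta)^4\}+C'\|b\|$. On the ball of radius $r$, \eqref{eq:section-quadratic-estimate} with $\mathscr R(0)=0$ gives two bounds:
\[
 \|\mathcal K_b(Z)\|\le r/2+C^2(2r)r,
 \qquad
 \|\mathcal K_b(Z)-\mathcal K_b(\widetilde Z)\|\le 4C^2r\|Z-\widetilde Z\|.
\]
Shrinking $A_0/\delta$ and $\|b\|$ makes $4C^2r<1/2$, so $\mathcal K_b$ is a self-map and a contraction, and the Banach fixed-point theorem gives a unique zero in the ball. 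One point must be checked first: the hypotheses of \cref{lem:section-quadratic-remainder} (positive reconstructed radius and $\|\Theta_\beta\|_{L^\infty}<\mu/2$) must hold on the ball. Positivity holds because the $\mathbb I_\eta$ norm dominates $\sup_s\mathfrak c_{k,R}$ through $\mathbb C_R^k$, which gives relative $C^1$ smallness against $U\ge c\langle\zeta\rangle$; the outer piece is controlled by the $E_{1/2}\subset C^1$ trace. For the speed, $\Theta_\beta(0)$ is $O(\|\mathscr S(0)\|)$ by the scalar implicit relation \eqref{eq:waist-speed-elimination}, and $\Theta_\beta(Z)-\Theta_\beta(0)$ is bounded by $C\|Z\|_{\mathfrak X}$ using the $\mathfrak Z_\beta$ estimate. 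Hence $\|\Theta_\beta\|_{L^\infty}\le Cr<\mu/2$ once $A_0/\delta$ is small, and \eqref{eq:actual-clock-ratio} then gives a clock ratio in $[2/3,2]$.

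The main obstacle is the last claim, that the normal displacement is $o(1)$ relative to the local scale $A+|z|$ uniformly. I would argue it regionwise. In the core, $u=J_i^Av=M_A\,A v(\cdot/A)$, so relative size is measured by $\mathfrak c_k(v)\langle\zeta\rangle^{-1}$-type quantities. These are bounded by $\|v\|_{\mathbb C_R^k}\le\|Z\|_{\mathfrak X}\le r\to0$, and the weight $\langle\zeta\rangle^{j-1}$ in \eqref{eq:pincher-global-smallness} is precisely the relative-scale normalisation. On the cap the scale is fixed and $\|w\|_{C(\overline I_*;H^{k+2})}\le r$. On the collar, compatibility \eqref{eq:fibre-path-compatibility} identifies the two representatives. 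The delicate point is that the core bound must hold uniformly over all $s$, including the intermediate regime $A\ll|z|\ll\delta$. This uses the non-decaying conical component controlled by \eqref{eq:conical-halfline-summable-estimate}, not the discounted Hilbert norm. This is exactly why the $\mathbb C^k_R$ and $\mathbb H^k_{R,1}$ terms were built into the spaces, and I would make that the explicit final step.
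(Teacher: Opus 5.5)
Your proposal is correct and follows the paper's proof essentially step for step. You take the linear isomorphism and the uniform inverse bound from \cref{lem:fixed-interface-block-bounds}, obtain the zero as the fixed point of $-L^{-1}\{\mathscr S_b(0)+\mathscr R(Z)\}$ on a ball whose radius comes from \eqref{eq:section-residual-estimate} plus $\|b\|_{\mathcal T^\#}$, use the Lipschitz constant from \eqref{eq:section-quadratic-estimate}, and control the clock through the scalar waist elimination. You go slightly beyond the paper by checking that the hypotheses of \cref{lem:section-quadratic-remainder} hold on the ball and by deriving the relative-displacement claim from the $\mathbb C_R^k$ component, whereas the paper additionally notes that weighted membership and \eqref{eq:future-functional-identity} give $\mathscr B_{\rm c}=0$ at the zero.
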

\begin{proof}
The linear estimate and surjectivity are \cref{lem:fixed-interface-block-bounds}.  
Let $\mathscr R(Z)=\mathscr S(Z)-\mathscr S(0)-LZ$.  
For prescribed traces we use $\mathscr S_b$; its derivative is still $L$.  
Zeros are fixed points of
\begin{equation}\label{eq:section-contraction-map}
 \mathcal T_b(Z)
 :=-L^{-1}\{\mathscr S_b(0)+\mathscr R(Z)\}.
\end{equation}
By \cref{lem:section-quadratic-remainder}, this map has Lipschitz constant at most $C\rho$ on the radius-$\rho$ ball.  
Moreover
\[
 \|\mathscr S_b(0)\|_{\mathfrak Y}
 \le C\{\varepsilon^{17/2}+\varepsilon^4
       +\|b\|_{\mathcal T^\#}\}.
\]
Choose $\rho$ to be twice the right side multiplied by the inverse bound, and then reduce $\varepsilon$ and the trace data so that $C\rho<1/2$.
The contraction theorem gives the unique zero and all asserted estimates. 
The scalar implicit estimate for \eqref{eq:waist-speed-elimination} gives the $\mathfrak Z_\beta$ bound and, after one further reduction, positivity of the clock.  
Finally, weighted membership and \eqref{eq:future-functional-identity} imply $\mathscr B_{\rm c}=0$ at the zero.
\end{proof}

The half-line theorem solves the dynamical equation for a prescribed free trace.
It remains to choose the finite controlled coordinates and the outer volume direction simultaneously.

\begin{lemma}[Joint future--volume selection]
\label{lem:fully-coupled-selection}
Let $\psi_1,\ldots,\psi_N$ be the compactly supported functions from \cref{lem:compact-core-selection}, and let $\phi$ be the outer field from \cref{lem:uniform-volume-chart}.  
Extend $\phi$ to the doubled cap so that its support is disjoint from the matching collar.

For each sufficiently small free trace $b\in\mathcal T_{\rm f}^\#$, denote the zero of the full section supplied by \cref{thm:terminal-mixed-inverse} by
\[
 Z_\varepsilon[b]=(v_\varepsilon[b],w_\varepsilon[b])
\]
and set
\begin{equation}\label{eq:full-controlled-graph}
 \Phi^{\rm full}_\varepsilon(b)
 :=P_{\rm c}v_\varepsilon[b](\cdot,0).
\end{equation}
This is a $C^1$ map.  
Put $p_\phi=(0,\phi)\in\mathcal T_{\rm f}^\#$ and choose a closed complement
\[
 \mathcal T_{\rm f}^\#=\mathcal H\oplus\operatorname{span}\{p_\phi\}.
\]
For $h=(h_i,h_o)\in\mathcal H$, put
\begin{align*}
 q(a)&=\sum_{j=1}^Na_j\psi_j,\\
 \tau(h,a,\lambda)&=h+(q(a),\lambda\phi)\in\mathcal T^\#,\\
 b(h,a,\lambda)&=\mathcal P_{\rm f}\tau(h,a,\lambda)
                 =h+\lambda p_\phi.
\end{align*}
The support choices make $\tau(h,a,\lambda)$ a compatible full trace.  
Define
\begin{align}
 \mathcal G_{\rm c}^\varepsilon(h,a,\lambda)
 &:={}
 P_{\rm c}q(a)-\Phi^{\rm full}_\varepsilon(b(h,a,\lambda))
 \in E_{\rm c},
 \label{eq:full-tuning-equations}\\
 u_0^\varepsilon(h,a,\lambda)
 &:={}
 h_{A_0}[h_i+q(a),h_o+\lambda\phi],
 \label{eq:selected-initial-height}\\
 \mathcal G_V^\varepsilon(h,a,\lambda)
 &:={}
 \Vol\!\left(
   \overline X_{A_0}+u_0^\varepsilon(h,a,\lambda)\overline\nu_{A_0}
 \right)-V_0.
 \label{eq:initial-volume-selection}
\end{align}
For $\varepsilon=A_0/\delta$ sufficiently small, the $N+1$ equations
\begin{equation}\label{eq:joint-future-volume-system}
 \mathcal G_{\rm c}^\varepsilon(h,a,\lambda)=0,
 \qquad
 \mathcal G_V^\varepsilon(h,a,\lambda)=0
\end{equation}
have, for every sufficiently small $h\in\mathcal H$, a unique small solution $(a_\varepsilon(h),\lambda_\varepsilon(h))$, depending $C^1$ on $h$.  
At this solution the corresponding invariant family has enclosed volume $V_0$ for every $0<A\le A_0$.  
The resulting traces form a $C^1$ submanifold of codimension $N+1$ in $\mathcal T^\#$ and of codimension $N$ in its fixed-volume hypersurface.
\end{lemma}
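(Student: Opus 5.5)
The plan is to solve \eqref{eq:joint-future-volume-system} by the implicit function theorem in the finite-dimensional unknowns $(a,\lambda)\in\R^N\times\R$, with $h\in\mathcal H$ as parameter, uniformly in small $\varepsilon$. Volume constancy then comes from the invariance equation, and the codimension count from the structure of the solution set.

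\emph{Step 1: regularity of $\Phi^{\rm full}_\varepsilon$.} By \cref{thm:terminal-mixed-inverse}, $Z_\varepsilon[b]$ is the fixed point of the contraction $\mathcal T_b$ in \eqref{eq:section-contraction-map}. The map $b\mapsto\mathscr S_b(0)$ is affine, and $\mathscr R$ is $C^2$ with the quadratic bound \eqref{eq:section-quadratic-estimate}. The uniform contraction principle with parameters therefore makes $b\mapsto Z_\varepsilon[b]$ a $C^1$ map into $\mathfrak X$. The trace map and $P_{\rm c}$ are bounded, so $\Phi^{\rm full}_\varepsilon$ is $C^1$. Differentiating the fixed-point identity gives $D_bZ_\varepsilon[b]=-L^{-1}(0,0,\mathrm{id})+O(\rho)$. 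At the zero-trace linearisation the controlled part of the initial trace is fixed by the future formula \eqref{eq:linear-stable-future-conditions}, and this part depends only on the forcing. Hence $\|D\Phi^{\rm full}_\varepsilon(b)\|\le C\rho$, with $\rho\lesssim\varepsilon^4+\|b\|$. In addition, $\Phi^{\rm full}_\varepsilon(0)=O(\varepsilon^4)$.

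\emph{Step 2: the Jacobian in $(a,\lambda)$.} We have $D_a\mathcal G^\varepsilon_{\rm c}=B+O(\rho)$, because $b(h,a,\lambda)=h+\lambda p_\phi$ does not depend on $a$. We also have $D_\lambda\mathcal G^\varepsilon_{\rm c}=-D\Phi^{\rm full}_\varepsilon\,p_\phi=O(\rho)$. For the volume equation, $D_\lambda\mathcal G_V^\varepsilon=\int\phi\,\dd\mu_{\overline\Sigma_{A_0}}+O(\|u_0\|_{C^1})$, which is bounded below by $c_0$ by the choice of $\phi$ in \cref{lem:uniform-volume-chart}. The derivative $D_a\mathcal G_V^\varepsilon$ is bounded, since the $\psi_j$ are fixed compactly supported functions rescaled by $A_0$, with contribution $O(A_0^3)$.

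The Jacobian is therefore block upper-triangular up to $O(\rho)$, with invertible diagonal blocks $B$ and $c_0$, and is invertible for small $\varepsilon$ and $h$. At $h=0$ the residual is $(O(\varepsilon^4),\,\Vol(\overline\Sigma_{A_0})-V_0)$, and the second entry vanishes because $\overline\Sigma_{A_0}$ has fixed volume. A quantitative implicit function theorem then gives the unique small $C^1$ solution $(a_\varepsilon(h),\lambda_\varepsilon(h))$.

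\emph{Step 3: volume.} At a solution, $\mathcal G^\varepsilon_{\rm c}=0$ says that the controlled part of the full trace $\tau$ agrees with $P_{\rm c}v_\varepsilon[b](0)$. Since $\mathcal P_{\rm f}\tau=b$, it follows that $\tau=\operatorname{Tr}Z_\varepsilon[b]$. So $Z_\varepsilon[b]$ is an invariant family through the initial height $u_0^\varepsilon$. Along it, $\int_\Sigma\mathcal S\,\dd\mu=0$ and the first variation in \cref{lem:area-volume} make the volume constant in $A$. The equation $\mathcal G_V^\varepsilon=0$ then fixes this constant as $V_0$.

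\emph{Step 4: codimension.} The selected traces are the graph $h\mapsto\tau(h,a_\varepsilon(h),\lambda_\varepsilon(h))$ over $\mathcal H$. Since $\mathcal T^\#=\mathcal H\oplus\operatorname{span}\{p_\phi\}\oplus\mathcal J_{\rm c}E_{\rm c}$, this graph has codimension $N+1$. It lies in the hypersurface $\{\mathcal G_V=0\}$, which is a $C^1$ hypersurface because $D_\lambda\mathcal G_V\neq0$. Within that hypersurface the codimension is therefore $N$.

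\textbf{Main obstacle.} The hardest step is Step 1, specifically the uniform smallness of $D\Phi^{\rm full}_\varepsilon$. One must check that the free-trace direction $p_\phi$, which is an outer perturbation, feeds into the controlled core modes only through the $O(\varepsilon^{1/2})$ coupling of \cref{lem:fixed-interface-block-bounds}, and does not do so at order one. My first attempt would be to use the future identity \eqref{eq:future-functional-identity} together with the collar support disjointness to express $D_\lambda\Phi^{\rm full}_\varepsilon$ as a future integral of the outer-to-inner transfer term.
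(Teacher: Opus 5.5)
This is essentially the paper's proof: $C^1$ dependence of $Z_\varepsilon[b]$ from the parameter-dependent contraction, a Jacobian in $(a,\lambda)$ close to $\operatorname{diag}(B,c_\phi)$ with $c_\phi=\int_{\Sigma_0}\phi\,\dd\mu_{\Sigma_0}\neq0$ and $D_a\mathcal G_V^\varepsilon=O(A_0^3)$, and the finite-dimensional implicit function theorem. It then uses volume propagation from $\int\mathcal S\,\dd\mu=0$ together with $\beta<0$, and the codimension count from $\mathcal T^\#=\mathcal H\oplus\operatorname{span}\{p_\phi\}\oplus\mathcal J_{\rm c}E_{\rm c}$.

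The one soft spot is your Step~1 justification of $\|D\Phi^{\rm full}_\varepsilon\|\le C\rho$. The future formula \eqref{eq:linear-stable-future-conditions} makes the controlled trace depend only on the forcing in the decoupled core problem. In the coupled operator $L$, however, the free trace also produces outer-to-inner forcing.

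Your ``main obstacle'' is in fact unnecessary. Since $\Phi^{\rm full}_\varepsilon$ does not depend on $a$, the first equation is solved exactly by $a=B^{-1}\Phi^{\rm full}_\varepsilon(h+\lambda p_\phi)$. The remaining scalar equation in $\lambda$ then has derivative $c_\phi+o(1)+O(A_0^3)\|D\Phi^{\rm full}_\varepsilon\|$. So the uniform bound on $D\Phi^{\rm full}_\varepsilon$, which comes from the $A_0$-independent inverse estimate \eqref{eq:terminal-mixed-inverse}, already suffices without any smallness of $D_\lambda\mathcal G_{\rm c}^\varepsilon$.
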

\begin{proof}
The inverse and quadratic estimates in \cref{thm:terminal-mixed-inverse,lem:section-quadratic-remainder} give the $C^1$ solution map and hence \eqref{eq:full-controlled-graph}.  
If $Z_\varepsilon[b(h,a,\lambda)]=(v,w)$, the future Duhamel identity gives
\[
 \mathcal G_{\rm c}^\varepsilon(h,a,\lambda)
 =P_{\rm c}q(a)
  +\int_0^\infty e^{-\tau\mathcal L_{\rm c}}
       P_{\rm c}F_i(v,w)(\tau)\,\dd\tau.
\]
Thus $\mathcal G_{\rm c}^\varepsilon=0$ is precisely the system of $N$ controlled future conditions.  
At a zero, the actual trace has free part $b(h,a,\lambda)$ and controlled part $P_{\rm c}q(a)$; by \eqref{eq:free-trace-projection} it is therefore $\tau(h,a,\lambda)$.

The map
\[
 B:\mathbb R^N\longrightarrow E_{\rm c},
 \qquad Ba=\sum_{j=1}^Na_jP_{\rm c}\psi_j,
\]
is an isomorphism by \cref{lem:compact-core-selection}.  
By the definition of the lift, $\mathcal P_{\rm f}(\psi_j,0)=0$.  
Hence the free datum is independent of $a$, while the outer datum supported away from the collar has an $o(1)$ effect on the controlled trace.  
The local parametrices converge in operator norm on this one-dimensional outer span, and therefore
\[
 D_a\mathcal G_{\rm c}^\varepsilon(0,0,0)=B,
 \qquad
 D_\lambda\mathcal G_{\rm c}^\varepsilon(0,0,0)=o(1).
\]
The first variation of enclosed volume and the choice of $\phi$ give
\[
 D_\lambda\mathcal G_V^\varepsilon(0,0,0)
 =\int_{\Sigma_0}\phi\,\dd\mu_{\Sigma_0}+o(1)
 =:c_\phi+o(1),
 \qquad c_\phi\ne0.
\]
Each $\psi_j$ has fixed compact support in the similarity variable.  
Its physical normal height is $O(A_0)$ on an area $O(A_0^2)$, so $D_a\mathcal G_V^\varepsilon(0,0,0)=O(A_0^3)=o(1)$.  
Hence
\begin{equation}\label{eq:joint-selection-jacobian}
 D_{(a,\lambda)}
 \begin{pmatrix}\mathcal G_{\rm c}^\varepsilon\\
                 \mathcal G_V^\varepsilon\end{pmatrix}(0,0,0)
 =
 \begin{pmatrix}
  B+o(1)&o(1)\\
  o(1)&c_\phi+o(1)
 \end{pmatrix},
\end{equation}
which is invertible for small $\varepsilon$.  
At $h=0$, the value of the left-hand side of \eqref{eq:joint-future-volume-system} at $(a,\lambda)=(0,0)$ tends to zero with the matching defect.  
The parameter-dependent finite-dimensional implicit function theorem yields unique small $C^1$ functions $(a_\varepsilon(h),\lambda_\varepsilon(h))$.  
Projection onto the direct summand $\mathcal H$ recovers $h$ from the resulting full trace, so this parametrisation is a $C^1$ embedding after shrinking the ball.  
Since
\[
 \mathcal T^\#=\mathcal H\oplus\operatorname{span}\{p_\phi\}
                 \oplus\mathcal J_{\rm c}E_{\rm c},
\]
its image has codimension $N+1$ in $\mathcal T^\#$.  
The volume equation is transverse to $p_\phi$, so the same image has codimension $N$ within the fixed-volume hypersurface.  
The volume chart is used here only for this transverse initial direction; no slice-by-slice correction is introduced.

Let $\Omega_A$ be the region bounded by the selected invariant surface.
The invariance equation and the first variation of volume give
\begin{align}
 \beta(A)\frac{\dd}{\dd A}\Vol(\Omega_A)
 &=\int_{\partial\Omega_A}
   \beta(A)\langle\partial_AX_A,\nu_A\rangle\,\dd\mu_A\\
 &=\int_{\partial\Omega_A}\mathcal S_A^\sharp\,\dd\mu_A
 =-\int_{\partial\Omega_A}\Delta H\,\dd\mu_A=0.
 \label{eq:volume-propagation-along-invariant-family}
\end{align}
Since $\beta(A)<0$, the volume is constant in $A$ and equals $V_0$ by \eqref{eq:initial-volume-selection}.
\end{proof}

The selected surfaces form an invariant family indexed by the waist scale.
The following elementary observation turns this family into an actual surface diffusion flow.

\begin{lemma}[Invariant families generate flows]
\label{lem:invariant-family-generates-flow}
Let $X_A$ be a $C^1$ family of smooth embedded surfaces for $0<A\le A_0$ and suppose
\begin{equation}\label{eq:invariant-family-hypothesis}
 \mathcal S_A^\sharp=\beta(A)
 \langle\partial_AX_A,\nu_A\rangle,
 \qquad \beta(A)<0.
\end{equation}
If $A_t=\beta(A)$, then, after a time-dependent reparametrisation of the domain, $X_{A(t)}$ is a surface diffusion flow.
\end{lemma}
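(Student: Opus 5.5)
The plan is to compose the family with the solution of the scalar ODE and then correct the tangential motion by a domain diffeomorphism. Since $\beta$ is continuous (indeed $C^1$ in $A$ on $(0,A_0]$ by the construction) and negative, the ODE $A_t=\beta(A)$, $A(t_0)=A_0$, has a unique solution. This solution is strictly decreasing on its maximal interval, and we define $T$ as the time at which $A(t)\downarrow0$. The time is finite: the clock comparison in \cref{thm:terminal-mixed-inverse} gives $\beta(A)\le-\tfrac{\mu}{2}A^{-3}$, so $T-t_0\le A_0^4/(2\mu)$.

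Next put $Y(\cdot,t)=X_{A(t)}$. By the chain rule,
\[
 \partial_tY=\beta(A)\,\partial_AX_A,
 \qquad
 \langle\partial_tY,\nu\rangle=\beta(A)\langle\partial_AX_A,\nu_A\rangle=\mathcal S_A^\sharp .
\]
Hence the normal velocity of $Y$ already equals the surface diffusion normal speed $-\Delta H$, with the orientation convention of \eqref{eq:sdf} that $\mathcal S_A^\sharp$ encodes. The tangential part is $\beta(A)(\partial_AX_A)^\top$, and we would write it as $d Y(W_t)$ for a time-dependent vector field $W_t$ on the fixed domain, defined by inverting the injective differential of the embedding.

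To remove this tangential part, solve the flow ODE $\partial_t\varphi_t=-W_t\circ\varphi_t$, $\varphi_{t_0}=\mathrm{id}$, on the compact domain $\mathbb T^2$. Because the domain is closed and $W_t$ is $C^1$ in space and continuous in time on $[t_0,T')$ for every $T'<T$, the flow $\varphi_t$ exists as a family of diffeomorphisms for all $t<T$. Then $f(\cdot,t)=Y(\varphi_t(\cdot),t)$ satisfies
\[
 \partial_tf=\partial_tY\circ\varphi_t+dY(\partial_t\varphi_t)=(\partial_tY-dY(W_t))\circ\varphi_t=-(\Delta H)\nu ,
\]
which is exactly \eqref{eq:sdf}. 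Reparametrisation leaves $H$, $\nu$ and $\Delta H$ invariant as geometric quantities, so the identity holds pointwise.

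The only delicate point is regularity. The family is only $C^1$ in $A$, so $f$ is a priori only $C^1$ in time. We would upgrade it by uniqueness: for any $t_1<T$, the restarted solution from the smooth surface $f(\cdot,t_1)$ given by \cref{thm:local-semiflow} is smooth, and by uniqueness (geometric, i.e.\ modulo tangential diffeomorphisms, in normal-graph form) it coincides with $f$. Hence $f$ is smooth on $\mathbb T^2\times[t_0,T)$. Finally, embeddedness is inherited from the embeddedness of each $X_A$.
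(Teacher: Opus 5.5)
Your proposal is correct and is essentially the paper's argument. By the chain rule and the hypothesis, the normal velocity of $X_{A(t)}$ is $\mathcal S_A^\sharp$, and the leftover tangential field is removed by integrating its negative on the compact parameter manifold to obtain the reparametrising diffeomorphisms. The extra steps you add go beyond the lemma as stated: finiteness of $T$ via the clock bound of \cref{thm:terminal-mixed-inverse}, and the smoothness upgrade by restarting the semiflow of \cref{thm:local-semiflow} and invoking uniqueness. They draw on the surrounding construction rather than the lemma's hypotheses, so they are harmless additions rather than part of the required proof.
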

\begin{proof}
The normal component of $\partial_tX_{A(t)}$ is $\beta(A)\langle\partial_AX_A,\nu_A\rangle$, which equals the surface diffusion normal speed by \eqref{eq:invariant-family-hypothesis}.  
Their difference is tangential.  
Integrating the negative of this tangential field on the compact parameter manifold produces the required family of diffeomorphisms.
\end{proof}

Before assembling the compactification theorem, we must also check that the small analytic correction cannot create a self-intersection.

\begin{lemma}[Quantitative embeddedness]
\label{lem:quantitative-embeddedness}
There is $\varepsilon_*>0$, independent of sufficiently small $A_0$, such that every corrected meridian produced by the section is a simple loop in $\{r>0\}$ whenever
\begin{equation}\label{eq:embeddedness-smallness}
 \sup_s\mathfrak c_{1,R}(v;s)
 +\sup_{t_*\in I_*}\|w(t_*)\|_{C^1(M_o)}
 <\varepsilon_*.
\end{equation}
Consequently its surface of revolution is an embedded torus.
\end{lemma}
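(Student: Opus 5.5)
The plan is to split the corrected meridian into three regions according to the cutoffs $\chi_i,\chi_o$: the similarity core $|z|\le R_1A$ for a large fixed $R_1$, the conical collar $R_1A\le|z|\le4\delta$, and the outer arc $|z|\ge2\delta$. In each region we show that the corrected curve is a small relative $C^1$ perturbation of an embedded reference curve. We then show that the pieces cannot meet one another. For the reference curves we use the patched meridians of $\overline\Sigma_A$ from \cref{prop:fixed-interface-matching}, which are already smooth simple loops in $\{r>0\}$ after reducing $A/\delta$.

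\emph{Core and collar.} Here the surface is a radial graph $r=\overline r_A+q$, with $q=\mathfrak N_A^{-1}(u_A)$ given by \cref{lem:radial-normal-gauge}. Because $u_A=\chi_iJ_i^Av+\chi_oJ_o^Aw$ and $J_i^Av=M_A(Av(\cdot/A))$, the quantity $\mathfrak c_{1,R}(v;s)$ bounds $|q|/(A+|z|)+|q_z|$ up to uniform constants. The weight $\langle\zeta\rangle^{j-1}$ in $\mathfrak c_{1,R}$, with $j=0,1$, is exactly the scale-invariant $C^1$ measurement relative to $A+|z|$. On the other hand, $\overline r_A(z)\ge c(A+|z|)$ uniformly, since $U\ge c\langle\zeta\rangle$ by \cref{thm:certified-positive-conical-profile} and the conical expansion, and the outer branch equals $\alpha|z|$ up to $O(A^4)$ corrections on the collar. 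Therefore $r\ge(c-C\varepsilon_*)(A+|z|)>0$. Every function graph over the $z$-axis is automatically embedded, so on $|z|\le4\delta$ the corrected curve is an embedded graph in $\{r>0\}$. The one extra point to check is that the normal-to-radial chart remains one-to-one and uniform all the way down to the waist. This follows from the uniform lower bound on $\langle e_r,\overline\nu_A\rangle$ stated in \cref{lem:radial-normal-gauge}.

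\emph{Outer arc.} On $|z|\ge2\delta$ the reference arc has positive distance $d_0$ from the axis and positive normal injectivity radius $\iota_0$. Both are uniform over admissible caps, as recorded in \cref{def:admissible-outer-cap} and \cref{lem:outer-cap-time}. A normal graph of $C^1$ size less than $\min\{d_0,\iota_0\}/4$ is therefore embedded, stays in $\{r>0\}$, and stays inside the tubular neighbourhood. In the transition annulus $\delta/2\le|z|\le2\delta$ both descriptions hold, and the compatibility $\mathfrak C_A=0$ ensures that they describe a single curve.

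\emph{Global simplicity.} It remains to exclude intersections between the graphical part and the far part of the outer arc. The graphical part lies within distance $C\varepsilon_*\delta$ of the graph of $\overline r_A$ over $|z|\le4\delta$. The remainder of the outer arc lies in the normal tube of radius $C\varepsilon_*$ around the part of $\Gamma_0$ not in the collar. The corresponding reference pieces are disjoint compact sets with separation bounded below uniformly in $A$, because the reference loop is simple and the tubular radius is uniform. Choosing $\varepsilon_*$ small relative to this separation completes the argument. The main obstacle is uniformity as $A\to0$ near the waist, where the geometric scale degenerates. The plan there is to work entirely in scaled units $A+|z|$ and to rely on the graph argument, which needs only positivity of $r$ and no injectivity radius. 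Rotating a simple loop in $\{r>0\}$ then gives an embedded torus.
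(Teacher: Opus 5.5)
Your argument is correct, but it is organised differently from the paper's.

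The paper proceeds in three steps:
\begin{enumerate}[label=\textup{(\arabic*)}]
 \item A blow-up/compactness argument over three limiting regimes (the rounded profile $U$, a conical ray, the fixed cap) shows that the approximate meridians carry a normal tube of variable radius $c\,\ell_A$, with $\ell_A=A+|z|$.
 \item On $|z|\le\delta/2$ the correction has relative $C^1$ size $O(\mathfrak c_{1,R})$ in units of $\ell_A$, so it stays inside that tube.
 \item The annulus $\delta/2\le|z|\le2\delta$ and the cap are treated as $C^1$ perturbations of fixed simple arcs, and non-adjacent pieces are separated by a fixed physical distance.
\end{enumerate}
You instead use the radial--normal chart of \cref{lem:radial-normal-gauge} to write the whole near-axis branch over $|z|\le4\delta$ as a radial graph $r=\overline r_A+q$. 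The chart's uniform invertibility down to the waist gives monotone $z$-projection, so injectivity is automatic. The scale-invariant bounds are then needed only for positivity, $r\ge(c-C\varepsilon_*)(A+|z|)$.

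This bypasses the variable-radius tube and its compactness argument entirely, and that is where your route is genuinely more elementary. The paper's tube argument is more robust, since it does not use graphicality and gives a quantitative tube. For this lemma, though, your graph observation suffices, and the paper's own core step already relies on the same radial chart.

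Three small points to tighten:
\begin{itemize}
 \item In the global step, the graph over $|z|\le4\delta$ and the part of the loop outside the collar are adjacent, not disjoint, so their separation is zero. Compare instead the graph over $|z|\le2\delta$ with the arc beyond the collar. Adjacent pairs are then handled in the overlap $2\delta\le|z|\le4\delta$, where both of your descriptions apply.
 \item You do not need $\mathfrak C_A=0$ to identify the two descriptions there. The corrected meridian is the single normal graph $u_A=h_A[v,w]$ over $\overline\Sigma_A$.
 \item On $\delta/2\le|z|\le4\delta$, your bound on $|q|/(A+|z|)+|q_z|$ also uses the $\|w\|_{C^1}$ part of the hypothesis, not just $\mathfrak c_{1,R}$. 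The resulting constants depend on $\delta$, which is harmless because $\varepsilon_*$ need only be independent of $A_0$.
\end{itemize}
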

\begin{proof}
Introduce the local length
\[
 \ell_A(z)=A+|z|\quad (|z|\le2\delta),
 \qquad \ell_A=\delta\quad\hbox{on the outer cap}.
\]
The approximate meridians possess a normal tube of radius $c\ell_A$.
Indeed, take a hypothetical sequence of first failures, translate to the bad points and dilate by $\ell_A^{-1}$.  
The differentiated profile expansion and fixed-interface estimates give uniform $C^2$ bounds on every fixed rescaled ball.  
A subsequence therefore converges to precisely one of three limits, according as $|z|/A$ stays bounded, tends to infinity while $|z|/\delta\to0$, or stays in the outer region: a piece of the positive rounded profile $U$, a straight conical ray, or a piece of the fixed outer cap.  
The first and third have positive reach on compact subarcs and the second is a line, contradicting the normalised first failure.  
Compactness of the three limiting regimes makes the resulting $c>0$ uniform.

On $|z|\le\delta/2$, use the radial coordinate supplied by \cref{lem:radial-normal-gauge}.  
Since $\overline r_A(z)=AU(z/A)$ there, the corrected radial graph is
\[
 r_{\rm corr}(z,A)
 =AU(z/A)+q_A(z),
 \qquad
 q_A=\mathfrak N_A^{-1}\{M_A(Av)\}.
\]
The uniform inverse-chart estimate gives $q_A=Av+O(|Av|^2/\ell_A)$ and its corresponding scaled first-derivative bound.  
Positivity of the profile and \eqref{eq:embeddedness-smallness} give $r_{\rm corr}\ge AU-|q_A|\ge c(A+|z|)>0$.  
Moreover
\[
 |Av(z/A,s)|+\ell_A(z)|\partial_z\{Av(z/A,s)\}|
 \le C\mathfrak c_{1,R}(v;s)\ell_A(z).
\]
After decreasing $\varepsilon_*$, the same inequality, with a larger constant, holds with $Av$ replaced by $q_A$.
The radial--normal reparametrisation has uniformly bounded scaled first derivative by \cref{lem:radial-normal-gauge}, so the same estimate measures the geometric $C^1$ displacement in the normal tube.  
Thus the correction stays inside this variable-radius tube.  
On $\delta/2\le|z|\le2\delta$, the fixed-interface estimates give a $C^1$ perturbation of a fixed simple conical annulus.  
On the remaining cap, compactness supplies a positive tubular radius and the outer $C^1$ norm preserves simplicity.  
The three conclusions agree on their overlaps because the correction is reconstructed by the fixed partition of unity.  
Distinct core and cap portions remain separated by a fixed physical distance outside the adjacent overlaps.  
Thus no new intersection is possible.  
Since the whole loop lies in $r>0$, revolution about the axis is an embedding of $\mathbb T^2$.
\end{proof}

We have now obtained the analytic, volume and geometric ingredients needed to compactify any positive conical profile.

\begin{theorem}[Compactification of a positive conical profile]
\label{thm:compact-pincher-realisation}
Suppose that there is a smooth positive even conical profile $U$ satisfying \eqref{eq:similarity-profile} and the differentiated tail
\[
 U(\zeta)=\alpha\zeta+b\zeta^{-3}+c\zeta^{-7}
 +O(\zeta^{-11})
 \qquad(\zeta\to+\infty).
\]
Fix an $(\alpha,\delta)$-admissible terminal meridian $\Gamma_0$, let $N=\dim E_{\rm c}$, and take $k\ge10$.  
There is $A_*>0$ such that, whenever $0<A_0<A_*$ and \eqref{eq:outer-cap-time-condition} holds, the compatible trace space at scale $A_0$ contains an infinite-dimensional $C^1$ submanifold $\mathcal M_{\rm pin}^k$ of codimension $N+1$.
It lies in the fixed-volume hypersurface and has codimension $N$ there.  
Every smooth trace in this submanifold gives a smooth closed embedded torus of revolution whose surface diffusion flow remains embedded for $t<T$ and develops an axial curvature singularity at a finite time $T$.  
Its reflected waist satisfies
\begin{align}
 A(t)
 &=\{4\mu(T-t)\}^{1/4}(1+o(1)),
 \label{eq:compact-pincher-radius}\\
 \frac{r(A(t)\zeta,t)}{A(t)}
 &\longrightarrow U(\zeta)
 \quad\hbox{locally smoothly}.
 \label{eq:compact-pincher-profile}
\end{align}
The limiting outer cap is a small smooth perturbation of $\Gamma_0$.
The submanifold and this limiting cap depend $C^1$ on small admissible perturbations of $\Gamma_0$.  
The $N$ controlled core coordinates and one transverse volume coordinate are determined as $C^1$ functions of an arbitrary free trace in the infinite-dimensional trace space.
\end{theorem}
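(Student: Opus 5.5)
The proof will assemble the ingredients already established: \cref{lem:fully-coupled-selection} for the family of traces, \cref{thm:terminal-mixed-inverse} for the invariant family, \cref{lem:invariant-family-generates-flow} for the flow, \cref{lem:quantitative-embeddedness} for embeddedness, and the decay built into $\mathbb I_\eta$ for the asymptotics.

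\emph{Step 1: the invariant family.} Fix $\Gamma_0$ and reduce $A_*$ so that $\varepsilon=A_0/\delta$ is below the thresholds of \cref{lem:fixed-interface-block-bounds}, \cref{thm:terminal-mixed-inverse} and \cref{lem:fully-coupled-selection}, and so that \eqref{eq:outer-cap-time-condition} holds. For each small $h\in\mathcal H$, \cref{lem:fully-coupled-selection} gives $(a_\varepsilon(h),\lambda_\varepsilon(h))$ and the full trace $\tau(h,a_\varepsilon,\lambda_\varepsilon)$. Define $\mathcal M_{\rm pin}^k$ as the image of $h\mapsto u_0^\varepsilon(h,a_\varepsilon(h),\lambda_\varepsilon(h))$. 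The codimension statements, membership in the fixed-volume hypersurface, and $C^1$ dependence are then exactly the conclusions of that lemma. The manifold is infinite-dimensional because $\mathcal H$ has finite codimension in the infinite-dimensional space $\mathcal T^\#$; the latter contains, for instance, all compactly supported even functions in the inner chart. For each such trace, the zero $Z_\varepsilon=(v,w)$ of the section, combined with the speed $\beta=\beta_0+A^{-3}\Theta_\beta(v,w)$, reconstructs heights $u_A$ over $\overline X_A$. On $\{\rho_i=1\}$ and on the cap these satisfy \eqref{eq:compact-invariance-equation}. The compatibility $\mathfrak C_A=0$ guarantees that the two local representatives define a single global surface. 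On the artificial half of the double, the completed equation $\Gamma_o\mathscr P_o w=0$ imposes no constraint on the physical surface.

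\emph{Step 2: the flow.} Define $T-t_0=\int_0^{A_0}\dd A/|\beta(A)|$. Since $|\vartheta|<\mu/2$, we have $\beta\sim-\mu A^{-3}$, so this integral is finite and lies between $A_0^4/(8\mu)$ and $A_0^4/(2\mu)$. Solving $A_t=\beta(A)$ and applying \cref{lem:invariant-family-generates-flow} produces a surface diffusion flow on $[t_0,T)$. The waist is $A(t)$, since the gauge $v(0,s)=0$ and $\partial_A\overline r_A(0)=1$. By \cref{thm:local-semiflow}, uniqueness identifies this flow with the one generated from the initial datum, and \cref{lem:symmetry} gives the symmetries. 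Integrating $A^3A_t=-\mu+\vartheta$ with $\vartheta=O(e^{-\eta s})\to0$ yields $A^4=4\mu(T-t)(1+o(1))$, which is \eqref{eq:compact-pincher-radius}. For \eqref{eq:compact-pincher-profile}, note that on $|z|\le\delta/2$ we have $r(A\zeta,t)/A=U(\zeta)+q_A(A\zeta)/A$ with $q_A=Av+O(|Av|^2/\ell_A)$. The $e^{-\eta s}$ decay of $v$ in $H^k_2$ and the local graph norms, together with Sobolev embedding on compact $\zeta$-sets, give $C^{k-1}_{\rm loc}$ convergence. Letting $k$ be arbitrary for smooth data, using uniqueness, gives $C^\infty_{\rm loc}$. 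Curvature blow-up follows because the azimuthal curvature at the waist is $1/(A\sqrt{1+r_z^2})\ge c/A\to\infty$.

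\emph{Step 3: embeddedness and the outer limit.} The section ball has radius $C\{\varepsilon^{17/2}+\varepsilon^4+\|h\|\}$. Its control of $\|v\|_{\mathbb C^k_R}$ and $\|w\|_{\mathbb O}\supset C(\overline I_*;H^{k+2})$ yields \eqref{eq:embeddedness-smallness} after shrinking, so \cref{lem:quantitative-embeddedness} gives embeddedness for all $t<T$. By \cref{lem:outer-cap-time}, $w$ is smooth on the closed interval $\overline I_*$. Hence the limiting cap is $\overline\Sigma_{A}$ with $A\to 0$, corrected by $w(T_*)$, and this is a small smooth perturbation of $\Gamma_0$. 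Smooth convergence away from the apex follows on the cap from the outer regularity and on the cone region from $AU(z/A)\to\alpha|z|$ together with the decay of $v$. The $C^1$ dependence on $\Gamma_0$ follows from the uniformity of all constants over the admissible neighbourhood $\mathcal U$ and the parameter-dependent contraction.

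\emph{Main obstacle.} The delicate step is passing from the similarity-variable decay to physical convergence in the intermediate conical region $A\ll|z|\ll\delta$. There $v$ need not decay in $\mathbb C^k_R$; it stays only small. To handle this, I would use the physical interpretation \eqref{eq:physical-material-derivative} together with the summability $\|\mathscr P_iv\|_{\mathbb H^k_{R,1}}<\infty$. These show that $\partial_s$ of the physical height $Av(z/A)$ is summable in $s$ at fixed $z$, so the height has a limit as $s\to\infty$. That limit is a small $C^k$ perturbation of the cone, and it establishes the claimed smooth convergence off the conical point.
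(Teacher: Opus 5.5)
Your proposal is correct and follows the paper's proof essentially step for step: the selection lemma for the trace family and its codimensions, the half-line section for the invariant curve, the invariant-family-to-flow lemma with the clock asymptotics, uniqueness across Sobolev levels for smoothness, the conical graph bound feeding quantitative embeddedness, waist curvature $A^{-1}$, and the terminal outer trace. Your extra argument for the intermediate conical region usefully spells out what the paper attributes to the all-orders mixed estimate. Note only that $\mathfrak D_\mu v$ differs from $\mathscr P_i v$ by $\mathcal A_Uv-(\mathcal A_Uv)(0)DU$; the conical graph bound makes this contribute $O(A^4|z|^{-3})$ at fixed $z$, so it is summable in $s$ as well.
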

\begin{proof}
Fix $\Gamma_0$ and reduce $A_0$ so that both the matching smallness conditions and \eqref{eq:outer-cap-time-condition} hold.  
Use \cref{prop:fixed-interface-matching} to construct a fixed-volume approximate family $\overline\Sigma_A$.  
The Fredholm splitting in \cref{prop:finite-core-splitting}, the core evolution and tame estimate in \cref{prop:terminal-core-semiflow}, the mixed inverse \eqref{eq:terminal-mixed-inverse}, and the quadratic normal-graph expansion give a contraction for \eqref{eq:compact-invariance-equation}, uniformly for small free traces $h\in\mathcal H$.  
The finite-dimensional compatibility conditions and the initial volume equation are solved jointly, as $C^1$ functions of $h$, by \cref{lem:fully-coupled-selection}.  
Volume then propagates by \eqref{eq:volume-propagation-along-invariant-family}.  
This yields an invariant curve with
\[
 \beta(A)=-\mu A^{-3}+A^{-3}\Theta_\beta(v,w)(s),
 \qquad |\Theta_\beta(v,w)(s)|\le Ce^{-\eta s}.
\]
By \cref{lem:invariant-family-generates-flow} this curve, after a tangential reparametrisation, is a surface diffusion flow.  
Its physical clock satisfies
\[
 T-t(s)
 =\int_s^\infty
 \frac{A_0^4e^{-4\mu\sigma}}
 {1-\Theta_\beta(v,w)(\sigma)/\mu}\,\dd\sigma
 =\frac{A(s)^4}{4\mu}\{1+O(e^{-\eta s})\}.
\]
The direct-sum description and codimension assertions follow from \cref{lem:fully-coupled-selection}; since $\mathcal H$ is infinite dimensional, so is $\mathcal M_{\rm pin}^k$.  
The section estimates hold at every integer level $k\ge10$.  
For a smooth free trace, solutions constructed at two levels agree in the lower space by uniqueness of the contraction, so the selected invariant curve belongs to the inverse limit of all weighted Sobolev and conical graph spaces.  
Fourth-order parabolic bootstrapping therefore makes every positive-time slice smooth; choose one such slice as the initial torus.  
The all-orders mixed estimate gives $C^\infty_{\rm loc}$ convergence to $U$ and smooth outer convergence on compact sets away from the terminal point.  
Sobolev embedding on the doubled cap and the conical graph bound give \eqref{eq:embeddedness-smallness}; hence \cref{lem:quantitative-embeddedness} shows that the corrected meridian is a simple loop in $\{r>0\}$ for every $A>0$.  
Finally the azimuthal curvature of the waist is $A^{-1}\to\infty$, and \eqref{eq:compact-pincher-radius} follows from the displayed clock asymptotic.  
The outer maximal-regularity trace has a terminal value, so the limiting outer cap is a small perturbation of $\Gamma_0$.  
All matching operators, parametrices, contraction maps and finite-dimensional equations depend $C^1$ on an admissible cap parameter with the uniform bounds from \cref{lem:outer-cap-time}.  
The parameter-dependent contraction and implicit function theorems give the asserted $C^1$ cap dependence.
\end{proof}

\begin{proof}[Proof of \cref{thm:main}]
Apply \cref{thm:compact-pincher-realisation} to the profile supplied by \cref{thm:certified-positive-conical-profile} and to an arbitrary admissible reference cap $\Gamma_0$.  
The parameter enclosures are those of the profile theorem.  
For $A_0$ below the cap-dependent threshold, including \eqref{eq:outer-cap-time-condition}, the compactification theorem and \cref{lem:fully-coupled-selection} give the asserted infinite-dimensional family and its codimensions.  
Each member is an invariant curve for surface diffusion, remains embedded for every positive waist radius, and satisfies \eqref{eq:compact-pincher-radius}--\eqref{eq:compact-pincher-profile}.  
At the waist $r_z=0$, so the azimuthal principal curvature has magnitude $A(t)^{-1}$.  
Thus curvature diverges as $t\uparrow T$.  
By uniqueness of the local smooth semiflow, no smooth continuation through $T$ is possible, and $T$ is the maximal forward time.
Smooth convergence on compact subsets separated from the terminal conical point, and $C^1$ dependence on the reference cap, follow from the outer maximal-regularity construction and parabolic bootstrapping.
\end{proof}

\section*{Acknowledgements}

The author gratefully acknowledges partial support from the Australian Research Council through Future Fellowship FT250100880 and Discovery Project DP250101080.  
This research was first announced at the MATRIX Australia--Taiwan Joint Workshop on Geometric Analysis, held 17--21 August 2026.

\section*{AI and tool use statement}

ChatGPT 5.6 Sol was used as a discussion and editorial tool.  
Brainstorming with the model led directly to a substantial expansion of the bibliography, including important older references that the author was not previously aware of.
The tool was also used to identify typographical and presentational issues, to tidy and document code written by the author, and to help refine the presentation of the numerical supplement; see Lemmata~\ref{lem:validated-centre-miranda-cube} and~\ref{lem:validated-compactified-tail-bounds}.  
The mathematical ideas and strategy---including shooting from the axis and from infinity and gluing the resulting solutions, and gluing the conical pincher to an outer cap---were developed by the author.  
The final prose, all proofs and calculations, and the numerical supplement are the work of the author, who has checked and takes responsibility for the full contents of the manuscript.

\appendix

\section{The terminal Fredholm theorem}
\label{app:terminal-fredholm}

We prove the operator assertions used in \cref{prop:finite-core-splitting}.  
Throughout this appendix $m=2$ and $k\ge2$.  
It is convenient to begin before imposing the waist normalisation.  
Put
\begin{align*}
 \widetilde X^k&=H^k_2(\R)_{\rm even},\\
 \widetilde D^k
 &=\{v\in H^{k+4}_2(\R):\zeta v'\in H^k_2, v\ {\rm even}\},
\end{align*}
with the graph norm
\[
 \|v\|_{\widetilde D^k}
 =\|v\|_{H^{k+4}_2}+\|\zeta v'\|_{H^k_2}.
\]
Let $\mathcal L_0=\mathcal A_U+\mu(1-\zeta\partial_\zeta)$ be the unnormalised operator in \eqref{eq:fixed-pincher-linearisation}.

\subsection*{Coefficient structure and the end problem}

Direct differentiation of \eqref{eq:F-linearisation} gives 
\begin{equation}\label{eq:fredholm-coefficient-form}
 \mathcal A_Uv
 =-a_4v^{(4)}+a_3v'''+a_2v''+a_1v'+a_0v,
\end{equation}
where
\[
 a_4=\frac1{2(1+U'^2)^2},
 \qquad 0<a_*\le a_4\le a^*<\infty.
\]
The differentiated conical expansion gives, with
$q_\alpha=1+\alpha^2$,
\begin{align}
 a_4&=\frac1{2q_\alpha^2}+O(|\zeta|^{-4}),
 &a_3&=O(|\zeta|^{-1}),
 &a_2&=O(|\zeta|^{-2}),
 \nonumber\\
 a_1&=O(|\zeta|^{-3}),
 &a_0&=O(|\zeta|^{-4}),
 \label{eq:fredholm-coefficient-decay}
\end{align}
with the corresponding differentiated estimates.  
Note that the first error in $a_4$ still multiplies $v^{(4)}$; it is not compact as a map $\widetilde D^k\to\widetilde X^k$.

For fixed $\lambda$, the homogeneous equation $(\mathcal L_0-\lambda)v=0$ has on either end the algebraic solution and three WKB solutions
\begin{align}
 v_{\rm a}(\zeta)
 &=|\zeta|^{1-\lambda/\mu}
   \{1+O(|\zeta|^{-4})\},
 \label{eq:fredholm-algebraic-solution}\\
 v_j(\zeta)
 &=|\zeta|^{\lambda/(3\mu)-5/3}
   \exp\{\varrho_j\kappa|\zeta|^{4/3}\}
   \{1+O(|\zeta|^{-4/3})\},
 \qquad \varrho_j^3=-1.
 \label{eq:fredholm-wkb-solutions}
\end{align}
Exactly one WKB branch decays and two grow.  
Moreover, $v_{\rm a}\in H^k_2$ precisely when 
\begin{equation}\label{eq:fredholm-algebraic-threshold}
 \Re\lambda>-\frac\mu2,
\end{equation}
because its weighted square is asymptotic to $|\zeta|^{-2-2\Re\lambda/\mu}$.
The asymptotic integration and dichotomy framework used below goes back to Levinson and Palmer \cite{Levinson1948,Palmer1988}; the coefficients and weighted threshold here are those of the present conical operator.

\begin{lemma}[Uniform end dichotomy]
\label{lem:fredholm-end-dichotomy}
Let $K$ be a compact subset of $\{\Re\lambda>-\mu/2\}$.  For $R$ sufficiently large, the first-order system associated with $(\lambda-\mathcal L_0)v=f$ on either end has a two-dimensional admissible bundle, spanned asymptotically by \eqref{eq:fredholm-algebraic-solution} and the decaying WKB branch, and a two-dimensional inadmissible bundle.  
Its one-sided Green operator is bounded from the weighted data space to the weighted graph space uniformly for $\lambda\in K$, and depends analytically on $\lambda$.
\end{lemma}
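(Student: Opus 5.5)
We will prove the lemma by classical asymptotic integration (Levinson) in the WKB variables of \cref{prop:finite-base-tail-chart}, followed by a Palmer-type dichotomy argument. We treat the positive end $\zeta\ge R$; the negative end follows by evenness.

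\textbf{Normal form.} Write $(\lambda-\mathcal L_0)v=0$ as a first-order system for $(v,v',v'',v''')$. Then apply the substitution $t=\kappa\zeta^{4/3}$, $v=\zeta^{-5/3}y(t)$, which brought the cone linearisation to \eqref{eq:exact-cone-normal-form}. With $\lambda$ present, the only changes are at order $t^{-1}$: the zero-order term becomes $(\lambda/\mu-2)t^{-1}$ up to normalisation.

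The coefficient decay \eqref{eq:fredholm-coefficient-decay} contributes the following. The $O(|\zeta|^{-4})$ error in $a_4$ and the other errors all give relative perturbations of size $O(t^{-2})$ or smaller in the rescaled system, uniformly for $\lambda\in K$.

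Diagonalising with the matrix $V$ of \eqref{eq:finite-base-tail-normal-form}, we obtain
\[
 z'=\bigl(\operatorname{diag}(0,-1,\rho_+,\rho_-)+t^{-1}B_1(\lambda)+O(t^{-2})\bigr)z .
\]
The eigenvalues are distinct, so a single near-identity change $z=(I+S(\lambda)/t)\tilde z$ removes the off-diagonal part of $B_1$, exactly as in \cref{lem:validated-compactified-tail-bounds}. This leaves a diagonal $t^{-1}$ term plus an $L^1$ remainder.

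\textbf{Levinson and the splitting.} Levinson's theorem then gives four solutions with exactly the asymptotics \eqref{eq:fredholm-algebraic-solution}--\eqref{eq:fredholm-wkb-solutions}. The constants are analytic in $\lambda$ and uniform on the compact set $K$, because the remainder bounds and the dichotomy gaps ($\Re$ of $0,-1,\rho_\pm$ are pairwise separated, apart from the algebraic branch, whose $t^{-1}$ exponent varies continuously) are uniform there.

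Admissibility is decided by membership in $H^{k+4}_2$ near infinity. The decaying WKB branch is always admissible. By \eqref{eq:fredholm-algebraic-threshold}, the algebraic branch is admissible exactly when $\Re\lambda>-\mu/2$, which holds on $K$. The two growing WKB branches are never admissible. This gives the two-dimensional admissible and inadmissible bundles, and their spans depend analytically on $\lambda$.

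\textbf{Green operator.} On $[R,\infty)$ we define the one-sided Green operator by the usual variation-of-constants split:
\begin{itemize}
\item the admissible components are integrated forward from $R$;
\item the growing WKB components are integrated backward from $\infty$, with the same kernels as \eqref{eq:finite-base-centre-volterra}--\eqref{eq:finite-base-unstable-volterra}.
\end{itemize}
The main obstacle is proving boundedness from the weighted data space $H^k_2$ to the graph space $\widetilde D^k$ (including the term $\zeta v'$) with constants uniform in $\lambda$. Pointwise Levinson bounds alone do not control weighted $L^2$ norms with the correct powers.

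I would first handle the algebraic (transport) component in logarithmic coordinates $y=\log\zeta$, via \eqref{eq:pincher-log-transport}. There it is a scalar first-order ODE $w'=(\tfrac32-m-\lambda/\mu)w+\text{forcing}$, and its real part is strictly negative on $K$, so Young's inequality in $L^2(\dd y)$ gives the bound. The three WKB components have exponential kernels in $t$. Schur's test in $t$, converted back with $\dd t\asymp \zeta^{1/3}\dd\zeta$, gains $\zeta^{-4/3}$ per inversion, which is more than enough for four derivatives against the weight. Derivatives up to order $k$ are then recovered by differentiating the equation, using $a_4\ge a_*$. The $\zeta v'$ control follows from the equation itself, since $\mu\zeta v'=\mathcal A_Uv+(\mu-\lambda)v-f$.

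Analytic dependence on $\lambda$ follows because every kernel and projection is analytic and the bounds are locally uniform. Finally, increasing $R$ makes the $O(t^{-2})$ feedback terms contractive in these norms, which closes the fixed point uniformly on $K$.
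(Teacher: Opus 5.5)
Your proposal is correct and follows essentially the paper's route. You reduce to the first-order system and separate the algebraic mode from the three WKB modes by a near-identity normal form in the variable $\zeta^{4/3}$. You treat the algebraic block with the forward transport kernel in $y=\log\zeta$, which is bounded because $\Re(\lambda+\mu/2)$ is bounded below by a positive constant on $K$. The WKB block uses exponential Volterra kernels, forward for the decaying branch and backward for the two growing ones. You close by contraction after enlarging $R$ and by differentiating for the graph norm.

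The differences are cosmetic. You reuse the $t=\kappa\zeta^{4/3}$, $v=\zeta^{-5/3}y$ normal form and cite Levinson for the asymptotic basis, whereas the paper conjugates by an explicit $\zeta$-dependent matrix and gets everything from the Volterra/Neumann series. Three small points should be tidied. First, $\Re\rho_+=\Re\rho_-$, so the modes are not pairwise separated in real part; this is harmless, since both growing branches are integrated backward and Levinson's dichotomy condition still holds. Second, the $\zeta v'$ bound should be read off the weighted Volterra estimate on $(v,\dots,v''')$ before $v''''$ is recovered from the equation, not the other way round, which would be circular. Third, the gain of $\zeta^{-4/3}$ exactly balances four derivatives rather than being more than enough.
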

\begin{proof}
Put $a_\infty=(2(1+\alpha^2)^2)^{-1}$ and $W=(v,v',v'',v''')^T$.  
The first-order system is
\begin{equation}\label{eq:fredholm-first-order-system}
 W'=\begin{pmatrix}
 0&1&0&0\\0&0&1&0\\0&0&0&1\\
 (a_0+\mu-\lambda)/a_4&(a_1-\mu\zeta)/a_4&a_2/a_4&a_3/a_4
 \end{pmatrix}W
 +a_4^{-1}(0,0,0,f)^T.
\end{equation}
Let $c_\infty=(\mu/a_\infty)^{1/3}$ and let $\varrho_j^3=-1$.  
An explicit leading conjugating matrix is formed by the algebraic column
\[
 t_{\rm a}(\zeta)=
 \left(1,\frac{1-\lambda/\mu}{\zeta},
 \frac{(1-\lambda/\mu)(-\lambda/\mu)}{\zeta^2},
 \frac{(1-\lambda/\mu)(-\lambda/\mu)(-1-\lambda/\mu)}{\zeta^3}
 \right)^T
\]
and the three WKB columns
\[
 t_j(\zeta)=(1,p_j,p_j^2,p_j^3)^T,
 \qquad p_j=\varrho_jc_\infty\zeta^{1/3}.
\]
Its determinant is separated from zero after the natural diagonal scaling, uniformly for $\lambda\in K$ and large $\zeta$.  
Substitution in \eqref{eq:fredholm-first-order-system}, followed by one algebraic near-identity correction, block-diagonalises the system into the algebraic equation below and
\begin{equation}\label{eq:fredholm-wkb-normal-form}
 Z_x=\kappa\operatorname{diag}(-1,e^{i\pi/3},e^{-i\pi/3})Z
     +x^{-1}D(\lambda)Z+R(x,\lambda)Z+F,
 \qquad \|R(x,\lambda)\|\le Cx^{-1-\epsilon},
\end{equation}
where $x=\zeta^{4/3}$ and $\kappa=3c_\infty/4$.  
The coefficient decay \eqref{eq:fredholm-coefficient-decay} gives the uniform remainder bound, also after the differentiations required by the graph norm.

For the algebraic block use $y=\log|\zeta|$ and $w=e^{-3y/2}v(e^y)$.  
The limiting equation is
\[
 \{\lambda-\mu(-\tfrac12-\partial_y)\}w=g,
\]
whose forward kernel is
\[
 K_{\rm a}(y,\eta)
 =\mu^{-1}{\bf1}_{\eta\le y}
   e^{-(\lambda+\mu/2)(y-\eta)/\mu}
\]
and has norm at most $\dist(K,\{\Re\lambda\le-\mu/2\})^{-1}$.  
For \eqref{eq:fredholm-wkb-normal-form}, factor the explicit diagonal power $x^{D(\lambda)}$ and use 
\begin{align*}
 K_{-}(x,s)&={\bf1}_{s\le x}e^{-\kappa(x-s)},\\
 K_{\pm}(x,s)&=-{\bf1}_{s\ge x}
 e^{\kappa e^{\pm i\pi/3}(x-s)}.
\end{align*}
Their $L^1$ operator norms are bounded by the reciprocal real-part gaps. 
The $Cx^{-1-\epsilon}$ remainder is a contraction after increasing the end base.  
Differentiation of these Volterra equations gives the weighted graph estimate; analytic dependence follows from uniform convergence of the Neumann series.  
The algebraic and decaying WKB columns are admissible and the other two are excluded, proving the asserted dichotomy and Green map.
\end{proof}

The two end Green operators can now be patched across the compact core, giving the global Fredholm estimate.

\begin{lemma}[Fredholm estimate]
\label{lem:fredholm-global-estimate}
For every compact $K\Subset\{\Re\lambda>-\mu/2\}$ there are $C,R<\infty$ such that
\begin{equation}\label{eq:fredholm-global-estimate}
 \|v\|_{\widetilde D^k}
 \le C\left\{
 \|(\lambda-\mathcal L_0)v\|_{\widetilde X^k}
 +\|v\|_{H^{k+3}(-R,R)}\right\}
\end{equation}
for $\lambda\in K$ and $v\in\widetilde D^k$.
Consequently $\lambda-\mathcal L_0$ is Fredholm of index zero throughout that half-plane.
\end{lemma}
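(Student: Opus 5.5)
The plan is to build a global parametrix from the two end Green operators of \cref{lem:fredholm-end-dichotomy} and a compact-core interior estimate, glued with a partition of unity. Then identify the index by a homotopy in $\lambda$.

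Fix $K$ and take $R$ as large as \cref{lem:fredholm-end-dichotomy} requires. Choose even cutoffs $\chi_0+\chi_\infty=1$, with $\chi_0$ supported in $|\zeta|<2R$ and $\chi_\infty$ supported in $|\zeta|>R$.

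For the end piece $\chi_\infty v$ there are two cases of what the dichotomy gives. If the admissible bundle were simply the span of the solutions lying in the weighted graph space, the Green operator alone would not bound $\chi_\infty v$. Instead I will use the following structure: the inadmissible (two growing WKB) components of any $\widetilde D^k$ function must vanish, while the admissible homogeneous part is a two-dimensional space of functions determined by finitely many boundary values at $|\zeta|=R$. So I write
\[
\chi_\infty v=G_\infty(\lambda)\bigl[(\lambda-\mathcal L_0)(\chi_\infty v)\bigr]+\text{(homogeneous admissible part)}.
\]
The homogeneous admissible part is controlled by the jet of $v$ at $|\zeta|\approx R$, hence by $\|v\|_{H^{k+3}(-R',R')}$ after enlarging $R$.

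The commutator $[\mathcal L_0,\chi_\infty]$ is a differential operator of order three with compact support. This is where the $\zeta\partial_\zeta$ term matters: its commutator is $\zeta\chi_\infty'$, which is bounded because $\chi_\infty'$ is compactly supported. So the commutator term is bounded by $\|v\|_{H^{k+3}(-2R,2R)}$.

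For the core piece $\chi_0v$, I use the interior elliptic estimate for the uniformly fourth-order operator, since $a_4\ge a_*>0$ by \eqref{eq:fredholm-coefficient-form}. This controls $\|\chi_0v\|_{H^{k+4}}$, and also the bounded-support term $\zeta(\chi_0v)'$, by the $\widetilde X^k$ norm of $(\lambda-\mathcal L_0)v$ on a slightly larger set plus $\|v\|_{H^{k+3}}$ there. Adding the two pieces gives \eqref{eq:fredholm-global-estimate}.

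The Fredholm property follows from three facts:
\begin{itemize}
\item[(i)] The restriction $\widetilde D^k\to H^{k+3}(-R,R)$ is compact by Rellich.
\item[(ii)] The estimate therefore gives finite-dimensional kernel and closed range, by the standard Peetre argument.
\item[(iii)] Gluing the interior inverse of a local Dirichlet-free extension with the end Green maps gives a right parametrix $\mathcal P(\lambda)$. Its error $(\lambda-\mathcal L_0)\mathcal P(\lambda)-I$ is a commutator supported in a compact set, and it maps $\widetilde X^k$ into $H^{k+1}$ of a compact set, hence is compact. So the cokernel is finite-dimensional.
\end{itemize}

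The index is constant on the connected half-plane $\{\Re\lambda>-\mu/2\}$, since the family is analytic (norm-continuous in $\lambda$ as maps $\widetilde D^k\to\widetilde X^k$, with the $\lambda$-dependence being the bounded multiplication $\lambda v$). So it suffices to show invertibility for one large real $\lambda$. For that I do a weighted energy estimate on $\langle(\lambda-\mathcal L_0)v,v\rangle_{H^k_2}$. The fourth-order term gives $a_*\|v\|_{H^{k+2}_2}^2$ up to lower-order commutators with the weight. The transport term contributes $\mu\int(1-\zeta\partial_\zeta)v\cdot v\,\langle\zeta\rangle^{-4}$; integrating by parts, this equals $(3/2)\mu\|v\|^2$ plus a bounded weight-derivative term, so it is bounded by $C\|v\|^2$. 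The same holds for the derivative commutators. Hence for $\lambda$ large we get coercivity and injectivity, and the analogous estimate for the adjoint gives surjectivity. Therefore the index is zero at this $\lambda$, and hence everywhere in the half-plane.

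The main obstacle is the end step. The principal coefficient error $a_4-a_\infty=O(|\zeta|^{-4})$ multiplies $v^{(4)}$ and is not compact, so it cannot be thrown into the compact error. It has to be absorbed inside the end Green operator, which is exactly what the $Cx^{-1-\epsilon}$ remainder in \eqref{eq:fredholm-wkb-normal-form} accomplishes. There is a further check here: the graph norm includes $\zeta v'$, and the Volterra kernels must control that quantity too. This holds because in the algebraic block $\zeta\partial_\zeta=\partial_y$ is read off from the transport equation itself.
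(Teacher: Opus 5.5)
Your estimate and Fredholm steps are the paper's argument. You use the one-sided Green operators of \cref{lem:fredholm-end-dichotomy} on the ends and the ordinary fourth-order estimate on the core. The cutoff commutators are third order with compact support, including the bounded term $\zeta\chi'$ from the drift, so they are controlled by $\|v\|_{H^{k+3}}$ on a compact set. Rellich compactness and a glued parametrix with compact commutator error then finish. Your homogeneous admissible correction on the ends is in fact zero, so the worry that the Green operator alone cannot bound $\chi_\infty v$ is unfounded. The function $\chi_\infty v$ has vanishing Cauchy data at the base, and the admissible Volterra kernels start from zero data there. Membership in the graph space kills the growing components. Hence $\chi_\infty v=G_\infty(\lambda-\mathcal L_0)(\chi_\infty v)$ exactly; your version is harmless but unnecessary.

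The genuine difference is the index. The paper reads it off the ODE structure. On the even half-line, evenness imposes $v'(0)=v'''(0)=0$, and the dichotomy gives a two-dimensional admissible bundle at infinity, so the matching problem is square (Palmer's index formula). Index zero is then exactly what \cref{lem:fredholm-generation} uses to turn G\aa rding injectivity at large real $\lambda$ into surjectivity. You run this the other way: the index is constant on the connected half-plane, and you show invertibility at one large real $\lambda$. This avoids counting modes and in effect proves the Lumer--Phillips range condition directly. But it puts all the weight on surjectivity, which you dispatch with ``the analogous estimate for the adjoint'', and that step needs an argument. In $\widetilde X^k$ with $k\ge2$ the Hilbert adjoint is conjugated by the inverse of the $H^k_2$ Riesz map, so it is not a differential operator. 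Dissipativity of the adjoint on its own domain also requires knowing that cokernel elements are smooth and decay fast enough to justify the integrations by parts at infinity. One way to supply it is to work first at $k=0$, where a cokernel element solves the weighted formal adjoint ODE. For $\Re\lambda>-\mu/2$ the adjoint algebraic branch $|\zeta|^{2+\overline\lambda/\mu}$ is not weighted-square-integrable. So only decaying WKB branches survive, with bounds $e^{-c|\zeta|^{4/3}}$ as in \cref{lem:controlled-mode-localisation}, and the element lies in $\widetilde D^0$, where your G\aa rding estimate kills it. Local and end regularity of solutions with $\widetilde X^k$ data then transfers surjectivity to level $k$. The paper's end count avoids all of this, at the price of relying on the correct dimension count of admissible modes and evenness conditions.
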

\begin{proof}
Apply \cref{lem:fredholm-end-dichotomy} on the two ends and the ordinary fourth-order estimate on a compact core.  
Fixed cutoffs produce only third-order commutators supported in a compact annulus, giving \eqref{eq:fredholm-global-estimate}.  
The embedding $\widetilde D^k\hookrightarrow H^{k+3}(-R,R)$ is compact.  
The same two one-sided Green operators and a compact-core inverse form a two-sided parametrix; their cutoff commutators are compact by the displayed embedding.
Thus the kernel and cokernel are finite and the range is closed.

The index follows from the end count.  
On the even half-line there are two admissible modes at infinity, while regular evenness imposes $v'(0)=v'''(0)=0$; hence the matching problem is square.  
Equivalently this is the exponential-dichotomy index formula \cite{Palmer1988}.  
The index is therefore zero wherever the pencil is Fredholm.
\end{proof}

\subsection*{Generation and the vertical resolvent}

Having identified the Fredholm region, we next establish the semigroup and high-frequency resolvent estimates used in the spectral splitting.

\begin{lemma}[Maximal realisation]
\label{lem:fredholm-generation}
The closure of $\mathcal L_0$ on $C_c^\infty(\R)_{\rm even}$ has domain $\widetilde D^k$ and generates a $C_0$-semigroup on $\widetilde X^k$.
\end{lemma}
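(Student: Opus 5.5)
The plan is to split $\mathcal L_0$ into a generator that can be handled explicitly and a perturbation with controlled relative bound. Write
\[
 \mathcal L_0=B_\infty+\mathcal K,
 \qquad
 B_\infty=-a_4\partial_\zeta^4+\mu(1-\zeta\partial_\zeta),
\]
where $\mathcal K=a_3\partial_\zeta^3+a_2\partial_\zeta^2+a_1\partial_\zeta+a_0$ collects the lower-order part of \eqref{eq:fredholm-coefficient-form}. By \eqref{eq:fredholm-coefficient-decay}, $\mathcal K$ has coefficients that are bounded together with their weighted derivatives. Since it is of order three, it is relatively bounded with respect to the fourth-order part with relative bound zero. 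Once $B_\infty$ is shown to generate a $C_0$-semigroup with the correct domain, a Miyadera--Voigt or Desch--Schappacher argument should absorb $\mathcal K$.

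\textbf{Generation for $B_\infty$.} The model case is constant $a_4=a_\infty$, where the explicit formula \eqref{eq:far-semigroup} gives the semigroup. I will check two things directly.
\begin{itemize}
 \item On $H^k_2$ with weight $\langle\zeta\rangle^{-4}$, the dilation factor changes the norm by at most $e^{Cs}$. Indeed, the weight changes under $\zeta\mapsto e^{-\mu s}\zeta$ by a bounded factor over unit times, and derivatives gain factors $e^{-\mu s j}$.
 \item The biharmonic heat semigroup is bounded on polynomially weighted Sobolev spaces, since its kernel decays faster than any polynomial.
\end{itemize}
For variable $a_4$ I will instead use an energy method. Commute $\partial_\zeta^j$, $j\le k$, through the equation and test against $\langle\zeta\rangle^{-4}\partial^j v$. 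The transport term then contributes $\mu(\tfrac32-2-j)\|\cdot\|^2$ plus a weight derivative $\zeta\,\partial_\zeta\langle\zeta\rangle^{-4}/\langle\zeta\rangle^{-4}$, which is bounded. The fourth-order term gives $-c\|\partial^{j+2}v\|^2$ up to lower-order terms, using $a_4\ge a_*$ and the boundedness of $\partial a_4$ and of the weight derivatives. This yields quasi-dissipativity of $\mathcal L_0-\omega_0$ on $\widetilde X^k$, for the equivalent norm, on $C_c^\infty$. Lumer--Phillips then gives generation, provided the range of $\lambda-\mathcal L_0$ is dense for large $\lambda>0$. For this I will use \cref{lem:fredholm-global-estimate}: it says $\lambda-\mathcal L_0$ is Fredholm of index zero on $\widetilde D^k$. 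Dissipativity makes it injective for large real $\lambda$, so it is surjective.

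\textbf{Domain identification.} The closure of $\mathcal L_0$ restricted to $C_c^\infty(\R)_{\rm even}$ should have domain exactly $\widetilde D^k$. I will show two things.
\begin{itemize}
 \item $C_c^\infty$ is dense in $\widetilde D^k$ for the graph norm. Use cutoffs $\chi(\zeta/R)$ and mollification. The commutator with $\zeta\partial_\zeta$ is $\zeta R^{-1}\chi'(\zeta/R)v$, which is bounded by $|v|$ on the annulus and tends to zero in $H^k_2$.
 \item The graph norm of $\mathcal L_0$ is equivalent to $\|\cdot\|_{\widetilde D^k}$. One direction is trivial. The other follows from \eqref{eq:fredholm-global-estimate}, together with $\lambda$ large and compactness to remove the local term.
\end{itemize}

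\textbf{Main obstacle.} The hardest step is the energy estimate at the top order $j=k$. The term $\mu\zeta v'$ is unbounded, and I must check that its commutators with $\partial^j$ and with the weight have a good sign or are bounded. Integration by parts gives
\[
 -\int \zeta\, \partial^{j+1}v\,\partial^jv\,\langle\zeta\rangle^{-4}
 =\tfrac12\int \partial_\zeta(\zeta\langle\zeta\rangle^{-4})\,|\partial^jv|^2,
\]
which is bounded by $C\|\partial^jv\|^2$. So this contributes only to the growth bound and not to the domain. The other delicate point is the commutator $[\partial^j,a_4]\partial^4$, which has order up to $k+3$. Interpolating against the $\|v\|_{H^{k+2}}$ gain handles it, after one integration by parts that moves a derivative onto the test function.
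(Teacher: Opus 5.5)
Your proposal is correct and is essentially the paper's argument: density of even test functions in $\widetilde D^k$ by cutoff and mollification, a weighted G\aa rding (energy) estimate giving quasi-dissipativity, injectivity for large real $\lambda$ combined with the index-zero statement of \cref{lem:fredholm-global-estimate} for surjectivity, and then Lumer--Phillips. The opening Miyadera--Voigt splitting is never used and can be dropped. Your linear cutoffs $\chi(\zeta/R)$ work as well as the paper's logarithmic ones: $\zeta\partial_\zeta\chi(\zeta/R)$ is uniformly bounded with $R^{-j}$-bounded derivatives, and the commutator lives on an annulus where the $H^k_2$ norm of $v$ tends to zero.
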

\begin{proof}
We first identify the core.  
Let $\chi\in C_c^\infty(\R)$ be even, equal to one on $[-1,1]$, and put
\[
 \chi_R(\zeta)=
 \chi\left(\frac{\log\langle\zeta\rangle}{\log R}\right),
 \qquad R>e.
\]
Then $\chi_Rv\to v$ in $H^{k+4}_2$ by dominated convergence.  
Moreover $|\zeta\partial_\zeta\chi_R|\le C/\log R$, with the analogous bounds after commuting $k$ derivatives, and hence
\[
 \|\zeta\partial_\zeta\{(1-\chi_R)v\}\|_{H^k_2}
 \le \|(1-\chi_R)\zeta v'\|_{H^k_2}
     +\frac C{\log R}\|v\|_{H^{k+4}_2}\longrightarrow0.
\]
Even mollification of $\chi_Rv$ proves that $C_c^\infty(\R)_{\rm even}$ is a core for $\widetilde D^k$.

Twice integrating the principal term in \eqref{eq:fredholm-coefficient-form} by parts, then commuting through the first $k$ derivatives, gives the weighted G\aa rding estimate
\begin{equation}\label{eq:fredholm-garding}
 \Re\langle\mathcal L_0v,v\rangle_{\widetilde X^k}
 \le-c\|v\|_{H^{k+2}_2}^2+C\|v\|_{\widetilde X^k}^2.
\end{equation}
All logarithmic derivatives of the weight are bounded.  
The drift causes no loss, since
\[
 \Re\int-\mu\zeta v'\bar v\langle\zeta\rangle^{-4}\,\dd\zeta
 =\frac\mu2\int
   (\zeta\langle\zeta\rangle^{-4})'|v|^2\,\dd\zeta
\]
and $[\partial_\zeta^j,\zeta\partial_\zeta]=j\partial_\zeta^j$.  
Local elliptic regularity and the two end estimates identify the closure with the maximal graph domain.  
Estimate \eqref{eq:fredholm-garding} gives injectivity for sufficiently large positive $\lambda$; \cref{lem:fredholm-global-estimate} and index zero give surjectivity.  
After a scalar shift, Lumer--Phillips \cite{LumerPhillips1961} proves generation.
\end{proof}

Generation supplies the evolution framework.
To isolate the finite spectral part, we still need uniform control high on vertical lines.

\begin{lemma}[Uniform vertical resolvent]
\label{lem:fredholm-vertical-resolvent}
For every $\delta>0$ there are $C_\delta,R_\delta<\infty$ such that 
\begin{equation}\label{eq:fredholm-vertical-resolvent}
 \|v\|_{\widetilde X^k}
 \le C_\delta\|(\lambda-\mathcal L_0)v\|_{\widetilde X^k}
\end{equation}
whenever $\Re\lambda\ge-\mu/2+\delta$ and $|\Im\lambda|\ge R_\delta$.
\end{lemma}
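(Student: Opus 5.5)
We prove \eqref{eq:fredholm-vertical-resolvent} by localising with a partition of unity and estimating each piece separately. The pieces are a compact core $|\zeta|\le R_1$, the two conical ends $|\zeta|\ge R_1/2$, and a frequency-based treatment of the high-$|\Im\lambda|$ regime. Write $\lambda=\sigma+i\tau$ with $\sigma\ge-\mu/2+\delta$ and $|\tau|\ge R_\delta$, and set $f=(\lambda-\mathcal L_0)v$. Since $v\in\widetilde D^k$ and $C_c^\infty(\R)_{\rm even}$ is a core by \cref{lem:fredholm-generation}, we may take $v$ smooth and compactly supported throughout.

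\emph{Core.} Take the weighted inner product of $f$ with $v$ in $\widetilde X^k$ and use the G\aa rding estimate \eqref{eq:fredholm-garding}. The real part gives
\[
 (\sigma-C)\|v\|^2_{\widetilde X^k}+c\|v\|_{H^{k+2}_2}^2\le\|f\|\|v\|.
\]
The imaginary part gives $|\tau|\|v\|^2\le\|f\|\|v\|+|\Im\langle\mathcal L_0v,v\rangle|$. By the same integrations by parts, the imaginary part of the principal term is bounded by $C\|v\|_{H^{k+1}_2}\|v\|_{H^{k+2}_2}$, and the drift contributes only $C\|v\|^2$ after taking the imaginary part, because $\Re$ of $\mu\zeta v'\bar v$ is the real part computed in \cref{lem:fredholm-generation}.

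Combining the two relations, the usual parabolic argument gives $|\tau|\,\|v\|^2\lesssim\|f\|\|v\|+\|v\|_{H^{k+2}_2}^{3/2}\|v\|^{1/2}$, which is absorbed for $|\tau|$ large. This is exactly the sectorial estimate, except that on the ends the real-part bound is useless: $\sigma$ may be negative, down to $-\mu/2+\delta$. So the absorption closes only with a cutoff to $|\zeta|\le R_1$ and an error term $\|(1-\chi)v\|$. The commutators are third order and carry a factor $R_1^{-1}$ from the cutoff derivatives.

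\emph{Ends.} Here we use the logarithmic conjugation $y=\log\zeta$, $w=e^{-3y/2}v(e^y)$ from \eqref{eq:pincher-log-transport}--\eqref{eq:pincher-log-fourth}. The operator becomes
\[
 \mu(-\tfrac12-\partial_y)-a_\infty e^{-4y}\prod_j(\partial_y+\tfrac32-j)+\text{(decaying errors)}.
\]
Pairing with $w$ in $L^2(\dd y)$ and taking real parts, the transport term contributes exactly $-\mu/2$. The fourth-order term contributes a nonpositive quantity up to lower-order terms of size $e^{-4y}\|w\|^2_{H^1}$, and these are controlled by interpolation against the dissipation. We therefore get
\[
 (\sigma+\mu/2)\|w\|^2\lesssim\|g\|\|w\|+\text{(commutators)}+\epsilon(R_1)\|w\|^2_{H^2_{e^{-4y}}},
\]
and $\sigma+\mu/2\ge\delta$ makes this coercive independently of $\tau$. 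The $k$ derivatives are handled by commuting, using $[\partial_\zeta^j,\zeta\partial_\zeta]=j\partial_\zeta^j$. This shifts the transport rate by $+j\mu$, which only helps the real part. The coefficient errors in \eqref{eq:fredholm-coefficient-decay} are $O(R_1^{-4})$ relative to the principal scale, including the non-compact error in $a_4$, so they are absorbed by choosing $R_1$ large.

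\emph{Patching.} Choose $R_1$ first, so that the end estimate and the end commutators close. Then choose $R_\delta$ large in terms of $R_1$, so that the core estimate absorbs its third-order commutators. These commutators are supported on the compact annulus, and there $|\tau|^{-1/4}$-type gains from the core resolvent beat them. The main obstacle is the transition annulus. On it, the core estimate needs $|\tau|$ large while the end estimate needs $R_1$ large, and the fourth-order commutators must be bounded by a norm that both estimates control. I would handle this with a $\tau$-dependent interpolation $\|v\|_{H^{j}}\lesssim|\tau|^{-(4-j)/4}$ on the core, together with the end $H^2_{e^{-4y}}$ dissipation, checking that the annulus gain is uniform in $\tau$.
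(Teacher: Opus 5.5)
Your route differs from the paper's. The paper proves a frozen-coefficient Fourier-multiplier estimate \eqref{eq:fredholm-parameter-elliptic-core} on the core, with full $H^{k+4}$ control. It uses the Fourier gap of the conjugated transport $-\mu/2-\mu\partial_y$ on the ends. It then closes by contradiction, extracting a limit from translated end pieces. You use energy identities and a direct bootstrap instead. Your end step is sound: the real-part coercivity from $\Re\lambda+\mu/2\ge\delta$, the $+j\mu$ shift, and dissipation absorbing the $a_4$ error all work. It is also quantitative and needs no nonvanishing weak limit. However, two claims in your core and patching steps are false, and the patching you leave open is exactly where they matter.

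First, the drift does not contribute only $C\|v\|^2$ to $\Im\langle\mathcal L_0v,v\rangle_{\widetilde X^k}$. \Cref{lem:fredholm-generation} computes the \emph{real} part. The imaginary part $\mu\Im\int\zeta v'\bar v\langle\zeta\rangle^{-4}\dd\zeta$ is only bounded by $\mu\|\zeta v'\|\|v\|$. On the ends it is, to leading order, $\mu\Im\langle\partial_yw,w\rangle$, which exactly cancels $\tau\|w\|^2$ for the packets $e^{-i\tau y/\mu}$ behind \eqref{eq:fredholm-exact-essential-boundary}. If your claim held, your global argument with \eqref{eq:fredholm-garding} would bound the resolvent on the essential line $\Re\lambda=-\mu/2$ itself; a negative $\sigma$ only enlarges the constant in the $H^{k+2}$ bound. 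So this term, not the sign of $\sigma$, is why you must localise. On $|\zeta|\le2R_1$ it is at most $CR_1\|u\|_{H^{k+1}_2}\|u\|_{H^k_2}$, and it is absorbed once $|\tau|\ge C(R_1)$.

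Second, for a cutoff $\chi(\zeta/R_1)$ the commutator $[\mathcal L_0,\chi]$ contains the zeroth-order term $-\mu\zeta\chi'$. Its coefficient on the annulus is $O(1)$, not $O(R_1^{-1})$. The end estimate has only the constant $\delta^{-1}$ and no $\tau$-gain, so choosing $R_1$ first cannot close the end commutators. There are two fixes:
\begin{itemize}
\item use logarithmic cutoffs $\chi(\log\langle\zeta\rangle/\log R_1)$, for which $\zeta\chi'=O(1/\log R_1)$;
\item or use nested cutoffs, with the core cutoff equal to one on the transition of the end cutoff, so that the core's $|\tau|$-gain pays for this term.
\end{itemize}

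The interpolation $\|v\|_{H^{k+3}}\lesssim|\tau|^{-1/4}$ you want on the annulus is also not available. An energy core estimate gives only $|\tau|\|u\|_{H^k_2}+|\tau|^{1/2}\|u\|_{H^{k+2}_2}$; the $H^{k+4}$ gain is what the paper's multiplier estimate buys. Moreover, the core's own commutator sits where only the end's $H^{k+2}$ dissipation is available. The fix is to keep the third-order commutators inside the energy pairings and integrate by parts once, bounding each by $\|v\|_{H^{k+2}(\mathrm{ann})}\|u\|_{H^{k+1}}$. Then the end dissipation controls the core's commutators and the core's $|\tau|$-gain controls the end's. The loop gain becomes a negative power of $|\tau|$ times a constant depending on $\delta$ and $R_1$, so it closes for $|\tau|\ge R_\delta$. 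With these repairs your route works, but as written the decisive patching step is not established.
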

\begin{proof}
On the logarithmic ends, the real part of $\lambda-\mu(-\tfrac12-\partial_y)$ has gap $\delta$.
The fourth-order term has the dissipative sign, and the errors in \eqref{eq:fredholm-coefficient-decay} are absorbed by taking the end cutoff large.  
On a fixed core the symbol is
\[
 \lambda+a_4(\zeta)\xi^4+i\mu\zeta\xi.
\]
For a cutoff supported in $|\zeta|<2R$, freezing $a_4$ on finitely many coordinate intervals and using the constant-coefficient Fourier multiplier estimate \cite{Grafakos2014} gives
\begin{align}
 &|\lambda|\|\chi v\|_{H^k}
 +|\lambda|^{1/2}\|\chi v\|_{H^{k+2}}
 +\|\chi v\|_{H^{k+4}}
 \nonumber\\
 &\qquad\le C\left(
 \|\chi(\lambda-\mathcal L_0)v\|_{H^k}
 +\|v\|_{H^{k+3}(R<|\zeta|<2R)}
 \right).
 \label{eq:fredholm-parameter-elliptic-core}
\end{align}
The bounded transport coefficient on the core is lower order here.  
On the ends, conjugation by $v(e^y)=e^{3y/2}w(y)$ makes the transport part $-\mu/2-\mu\partial_y$; Fourier transformation in $y$ gives the gap $\delta$, while the fourth-order part has the dissipative sign.  
The coefficient errors are absorbed after increasing $R$.

If the asserted estimate failed, take a normalised bad sequence.  
A core--end partition and \eqref{eq:fredholm-parameter-elliptic-core} show that no fixed proportion of its norm can remain in the core.  
Translate each remaining end piece in $y$.  
Weak compactness and the uniform local graph estimate give a nonzero limit solving the constant transport resolvent equation on $\R$; the Fourier gap forces that limit to vanish, a contradiction.  
Compactly supported partition commutators are absorbed by the parameter gain.  
This proves \eqref{eq:fredholm-vertical-resolvent}.
\end{proof}

\subsection*{The essential boundary and waist quotient}

Fredholmness gives $s_{\rm ess}(\mathcal L_0)\le-\mu/2$.  
For the reverse inequality, fix $\tau\in\R$ and choose
\[
 w_n(y)=L_n^{-1/2}
 \chi\left(\frac{y-y_n}{L_n}\right)e^{-i\tau y/\mu},
 \qquad y_n,L_n\to\infty,
 \qquad L_ne^{-4y_n}\to0.
\]
Pull back by $v(e^y)=e^{3y/2}w(y)$ and reflect evenly to the negative end.
After normalisation in $\widetilde X^k$ this is a weakly null Weyl sequence and
\[
 \|\{\mathcal L_0+\mu/2-i\tau\}v_n\|_{\widetilde X^k}\longrightarrow0.
\]
The fourth-order part and all coefficient errors vanish in the translated logarithmic limit.  
Hence
\begin{equation}\label{eq:fredholm-exact-essential-boundary}
 -\frac\mu2+i\R\subset\sigma_{\rm ess}(\mathcal L_0),
 \qquad s_{\rm ess}(\mathcal L_0)=-\frac\mu2.
\end{equation}

It remains to impose the dynamic neck normalisation.  
Since $k\ge2$, the trace $\ell(v)=v(0)$ is continuous.  
With $DU=U-\zeta U'$,
\[
 \mathcal L_0DU=4\mu DU,\qquad DU(0)=1,
\]
and there are topological decompositions
\[
 \widetilde X^k=\mathcal X^k_2\oplus\operatorname{span}\{DU\},
 \qquad
 \widetilde D^k=\mathcal D_{2,k}\oplus\operatorname{span}\{DU\}.
\]
Relative to them,
\begin{equation}\label{eq:fredholm-triangular-quotient}
 \mathcal L_0=
 \begin{pmatrix}
  \mathcal L&0\\ b&4\mu
 \end{pmatrix},
 \qquad b(v)=(\mathcal L_0v)(0).
\end{equation}
Thus $\mathcal L$ generates the quotient semigroup $e^{s\mathcal L}v=\Pi e^{s\mathcal L_0}v$ and inherits the Fredholm index, essential boundary and high-frequency estimate.  
This triangular identity also shows why the scale vector is removed without discarding a possible additional eigenvector or Jordan chain at the same eigenvalue.

Finally, analytic Fredholm theory and Riesz-projection theory \cite{Kato1966} show that the quotient resolvent is meromorphic in $\Re\lambda>-\mu/2$.  
Generation bounds its spectrum on the right, while \cref{lem:fredholm-vertical-resolvent} prevents poles from escaping vertically in a closed sub-half-plane.  
Hence only finitely many eigenvalues lie to the right of $-\omega$. 
On the complementary Riesz subspace the vertical estimate controls large imaginary part, the pole-free compact rectangle controls bounded imaginary part, and Hille--Yosida controls large positive real part.  
The stable resolvent is therefore uniform on $\{\Re\lambda\ge-\eta\}$ for $0<\eta<\omega$; the Gearhart--Pr\"uss theorem \cite{Gearhart1978,Pruss1984} gives
\[
 \|e^{s\mathcal L}P_{\rm s}\|
 \le C_\eta e^{-\eta s}.
\]
This completes the proof of \cref{prop:finite-core-splitting}.

\end{document}